\documentclass{amsart}

\usepackage{lineno}
\usepackage{amssymb}
\usepackage{amsmath}
\usepackage{amsthm}
\usepackage{mathrsfs}
\usepackage{bbm}
\usepackage{cite}
\usepackage{color}
\usepackage{enumitem}
\usepackage{pgfplots}

\usepackage[colorlinks,hypertexnames=false]{hyperref}
\usepackage[msc-links]{amsrefs}

\usepackage{tikz-cd}
\usepackage{mathtools}
\usepackage{extarrows}
\usepackage{setspace}

\hypersetup{
	colorlinks=true,
	linkcolor=black,
	filecolor=black,
	urlcolor=black,
	citecolor=black,
}

\newtheorem{thm}{Theorem}[section]
\newtheorem{cor}[thm]{Corollary}
\newtheorem{defn}[thm]{Definition}
\newtheorem{lem}[thm]{Lemma}
\newtheorem{prop}[thm]{Proposition}

\newtheorem{rmk}[thm]{Remark}

\numberwithin{equation}{section}

\begin{document}

\title[Weak slice regular functions on the $n$-dimensional quadratic cone]{Weak slice regular functions\\ on the $n$-dimensional quadratic cone of
octonions}
\author{Xinyuan Dou}
\email[Xinyuan Dou]{douxy@mail.ustc.edu.cn}
\address{Department of Mathematics, University of Science and Technology of China, Hefei 230026, China}
\date{\today}
\author{Guangbin Ren}
\email[Guangbin Ren]{rengb@ustc.edu.cn}
\address{Department of Mathematics, University of Science and Technology of China, Hefei 230026, China}
\date{\today}
\author{Irene Sabadini}
\email[Irene Sabadini]{irene.sabadini@polimi.it}
\address{Dipartimento di Matematica, Politecnico di Milano, Via Bonardi, 9, 20133 Milano, Italy}
\author{Ting Yang}
\email[Ting Yang]{tingy@aqnu.edu.cn}
\address{School of Mathematics and Computation Sciences, Anqing Normal University, Anqing 246133, China}
\date{\today}
\keywords{Functions of hypercomplex variable; slice regular functions; representation formula; {Taylor series; octonions}}
\thanks{This work was supported by the NNSF of China (11771412). The third author is partially supported by the Project PRIN 2017 "Real and Complex Manifolds: Topology, Geometry
and holomorphic dynamics".}

\subjclass[2010]{Primary: 30G35; Secondary 32A10}

\begin{abstract}
	In the literature on slice analysis in the hypercomplex setting, there are two main approaches to define slice regular functions in one variable:
one consists in requiring that the restriction to any complex plane is holomorphic (with the same complex structure of the complex plane), the second one makes use of stem and slice functions.
So far, in the setting of several hypercomplex variables, only the second
approach has been considered, i.e. the one based on stem functions. In this paper,
we use instead the first definition on the so-called $n$-dimensional quadratic cone of octonions.
 These two approaches yield the same class of slice regular functions  on
axially symmetric slice-domains, however, they are different on other types of domains. We call this new class of functions weak slice regular. We
show that there exist weak slice regular functions which are not slice regular in the second approach. Moreover, we study various properties of these functions, including a Taylor expansion.
\end{abstract}

\maketitle

\section{Introduction}
The theory of slice regular functions, also called slice monogenic or hyperholomorphic has considerably evolved, see \cite{Colombo2011001B,Colombobook2016,Gentili2013001B}, since these functions were first introduced by
Gentili and Struppa in \cite{Gentili2006001,Gentili2007001}. The function
theory had in fact older roots, see e.g. the work of Cullen \cite{Cullen1965001}, but it became a full function theory only recently. The study started for quaternionic functions and treating converging power series centered at the origin,  but it was soon realized that the definition in \cite{Gentili2006001,Gentili2007001} allowed to treat a general theory of these functions. The study was then generalized to the case of functions with values in a Clifford algebra see \cite{Colombo2009002,Colombo2010002},
and to octonions \cite{Gentili2010001}.
Roughly speaking, according to \cite{Gentili2006001}, a function is slice
regular if it is  holomorphic on each slice complex plane $\mathbb{C}_I$ for any imaginary unit $I$ of quaternions.
It turns out that many properties of holomorphic functions in complex analysis can be generalized to this framework.

 The function theory had a great impulse since it was soon realized that it allowed to define a functional calculus for quaternionic linear operators and for $m$-tuples of not necessarily commuting operators, see \cite{Colombo2011001B} and references therein, and it leads to the definition of the $S$-spectrum.  It has also applications  in Schur analysis \cite{Alpay2016001B,Alpaybook2020}, twistor transforms \cite{Gentili2014001},  and   operator theory \cite{Colombobook2018,Colombobook2019, Ghiloni2018001}.

A crucial tool in this type of analysis is the so-called representation formula, see \cite{Colombo2009Structure, Colombo2009003}, and then \cite{Colombo2009001} (for a version of the formula on semi-slices). This formula implies that all slice regular functions on open sets satisfying additional conditions.

The approach using slice functions and stems functions (notions which go back to  Fueter \cite{Fueter1934001})
 was considered by Ghiloni and Perotti in \cite{Ghiloni2011001} in the very general framework of real alternative algebras, which includes all the
previous cases, i.e. quaternions, Clifford algebras, octonions. The function theory so defined coincides with the previous one on some special open sets, as we shall see in Section 3.

We note that there is a third approach to slice hyperholomorphic functions using a global operator, see \cite{ColomboTAMS}, but we will not use this point of view in the present paper.

Soon after the introduction of the function theory of slice regular functions in one variable, started the study in several variables, and it was immediately clear that the situation was considerably more difficult. In the papers \cite{Colombo2012002,Ghiloni2012001}, the problem is addressed
following two different methods, but both based on stem functions. The first paper considers the quaternionic case, whereas the second one treats the Clifford variable case.
In this stream of ideas, Ren and Yang \cite{Ren2018001} define  slice regular functions on the $n$-dimensional quadratic cone of octonions $\mathbb{O}^n_s$, whereas the case of two quaternionic variables is treated in \cite{Dou2020002}.

	In \cite{Ghiloni2020001}, Ghiloni and Perotti define yet another kind of
slice regular functions on $(Q_A)^n$, where $A$ is a real alternative $*$-algebra; if $A=\mathbb{O}$, then $(Q_A)^n=\mathbb{O}^n$, but this class of functions is different from the one introduced by Ren and Yang \cite{Ren2018001}.

In this paper, we generalize some ideas in \cite{Dou2020001}, where we introduce the slice-topology in the space of quaternions, and we generalize
it to the $n$-dimensional quadratic cone $\mathbb{O}_s^n$ of octonions. This topology allows us to introduce a notion, that we call {\em weak slice regularity} on $\mathbb{O}_s^n$.
Functions weak slice regular turn out to be a generalization of  the   slice regular functions defined in \cite{Ren2018001}, which are called {\em
strong slice regular} in this paper.  Note that in one variable, weak slice regular functions coincide with the functions introduced by Gentili and Struppa, while strong slice regular functions coincide with those introduced by Ghiloni and Perotti.
Strong slice regular functions can be generalized to both $\mathbb{O}^n$ and $\mathbb{O}_s^n$,  while weak slice functions can be easily generalized to $\mathbb{O}_s^n$, but how to define them on $\mathbb{O}^n$ is an open question.

The  theory of weak slice regular functions is heavily based on the validity of the  representation formula, which is also the main tool for the extension of complex analysis to what is nowadays called  slice analysis.

As we pointed out, we will work on the  $n$-dimensional quadratic cone of
octonions $\mathbb{O}_s^n$ which is defined by
\begin{equation*}
\mathbb{O}_s^n=\bigcup_{I\in\mathbb{S}}\mathbb{C}_I^n,
\end{equation*}
where $\mathbb{C}_I^n=(\mathbb{C}_I)^n$, $I\in\mathbb O$ such that $I^2=-1$.
Note that since
\begin{equation*}
\mathbb{O}^n=\bigcup_{(I_1,\ldots, I_n)\in\mathbb{S}^n} \mathbb{C}_{I_1}\times\mathbb{C}_{I_2}\times\cdots\times\mathbb{C}_{I_n},
\end{equation*}
and the involved complex structures $I_1, \ldots, I_n$ are non commutative, it is a problem to provide  a suitable definition of holomorphy on the
slice $\mathbb{C}_{I_1}\times\mathbb{C}_{I_2}\times\cdots\times\mathbb{C}_{I_n}$.

Moreover,  strong slice regular functions in one variable are slice functions, and this fact makes it easier to generalize the notion to higher dimensions, see  \cite{Colombo2012002,Ghiloni2012001,Ren2018001,Dou2020002}.

Ren and Yang \cite{Ren2018001} defined the strong slice regular functions
on the $n$-dimensional quadratic cone of octonions $\mathbb{O}^n_s$. Since the slice-structure on $\mathbb{O}_s^n$ reflects the one of $\mathbb{O}$, many properties in complex analysis of several variables
 can be  generalized to  slice analysis on $\mathbb{O}_s^n$ using the property of being a slice function. As we already noted, unfortunately the slice analysis on $\mathbb{O}^n$ cannot be treated in this way.
 In contrast,  weak slice regular functions in one variable are not necessarily of slice type, and they may be defined on not necessarily axially symmetric slice-domains, see \cite{Dou2020002}.

	In \cite{Ghiloni2020001}, Ghiloni and Perotti define another kind of strong slice regular functions on the quadratic cone $(Q_A)^n$, where $A$ is
a real alternative $*$-algebra (if choose $A=\mathbb{O}$, then $(Q_A)^n=\mathbb{O}^n$). The restrictions of their functions on $\mathbb{O}_s^n$ are different from the functions introduced by Ren and Yang. Furthermore, the definition domain $\Omega$ of Ghiloni and Perotti's functions require stronger symmetric conditions.

In this paper we will consider weak slice regular functions on the $n$-dimensional quadratic cone of octonions $\mathbb{O}_s^n$. This class of functions allows to consider more general situations.
For example, we will show in the last section that the function $f(x)=\sqrt{x_1}$ can be extended to a weak slice regular on the slice-domain $\Omega$ defined by \eqref{eq-ob}. However, according to Proposition \ref{pr-tin}, $f$ cannot be extended to $\Omega$ as a strong slice regular function. We shall show, see Remark \ref{rm-sw}, that each strong slice regular extension is weak slice regular, but the converse does not hold, in general.
We prove that these two classes of functions coincide on axially symmetric domains in the slice-topology.

The structure of this paper is as follows.
In Section \ref{sc-pr}, we recall some basic concepts, also following \cite{Ren2018001}. In Section \ref{sc-sf}, we generalize slice functions on axially symmetric open sets to any subset in $\mathbb{O}_s^n$. We give the definition of slice function, including a version of the representation
formula; see Proposition \ref{pr-sf}. In Section \ref{sc-srf}, we introduce strong slice regular functions. In Section \ref{sc-ws}, we define the slice-topology on  $\mathbb{O}_s^n$ and the notion of weak slice regular functions on open sets in the slice-topology $\tau_s$. Moreover, we prove
a splitting lemma for these functions. We also show that strong slice regular functions are weak slice regular on axially symmetric open sets in $\tau_s$. In Section \ref{sc-ip}, we study in detail the properties of the
slice-topology on   $\mathbb{O}_s^n$ and we prove an identity principle for weak slice regular functions and for strong slice regular functions on
axially symmetric domains in $\tau_s$. In Section \ref{sc-rf}, we prove that the notion of weak slice regular and strong slice regular functions are equivalent on axially symmetric domains in $\tau_s$. We also give two representation formulas  for weak slice regular functions. Section 8 discusses the Taylor expansion in our framework. Finally, Section 9 contains an example of a weak slice regular function which is not (and cannot be extended to) a strong slice regular function.

\section{Preliminaries}\label{sc-pr}

In this section, we recall some well known material on octonions, see \cite{Schafer}, and some basic definitions  in octonionic slice analysis \cite{Ren2018001}.

The algebra of octonions $ \mathbb O $ is a real, alternative,  division algebra which is non-commutative and non-associative.
It can be generated  from the algebra of quaternions $ \mathbb H $ by the
Cayley-Dickson process.

As a real vector space, it is isomorphic to $ \mathbb R^8 $, and  can be equipped with a basis $ e_0=1,e_1,\dots,e_7 $.

The multiplication rules between elements in the basis $\{ e_0,e_1,\dots,e_7 \}$ are defined by the relations
$$ e_ie_j=-\delta_{ij}+\varepsilon_{ijk}e_k, \ \ \ \forall\  i, j, k \in \{ 1,2,\dots,7 \}. $$
Here $ \delta_{ij} $ is the Kronecker symbol and
\[ \varepsilon_{ijk}=
\begin{cases} (-1)^{\sigma(\pi)}  &\textrm{if}\ \   (i, j, k) \in \pi(\Sigma),
\\
0 & \textrm{otherwise}.
\end{cases} \]
where  $ \pi $ is a permutation, $\sigma(\pi)$ its sign, and
 $$ \Sigma=\{ (1, 2, 3), (1, 4, 5), (2, 4, 6), (3, 4, 7), (5, 3, 6), (6, 1,7), (7, 2, 5) \}. $$

The sphere $\mathbb{S}$ of imaginary units of octonions $\mathbb{O}$ is defined by
\begin{equation*}
\mathbb{S}:=\{I\in\mathbb{O}\ :\ I^2=-1\}.
\end{equation*}
An element $I\in\mathbb{S}$ belongs to the set of purely imaginary octonions namely octonions of the form $\sum_{\ell=1}^7 x_\ell e_\ell$, $x_\ell\in\mathbb{R}$.

For each $I\in\mathbb{S}$ we denote by $\mathbb{C}_I$ the complex plane whose imaginary unit is $I$ and setting $x=(x_1,...,x_n),y=(y_1,...,y_n)\in\mathbb{R}^n$, we write
\begin{equation*}
x+yI:=(x_1+y_1I,x_2+y_2I,...,x_n+y_n I), \qquad x_\ell+Iy_\ell\in\mathbb{C}_I, \, \ell=1,\ldots, n.
\end{equation*}

\begin{defn}
The $n$-dimensional quadratic cone in $\mathbb{O}^n$ is defined by
\begin{equation*}
	\mathbb{O}^n_s:=\bigcup_{I\in\mathbb{S}}\mathbb{C}_I^n
\end{equation*}
where
\begin{equation*}
\mathbb{C}_I^n:=\{x+yI:x,y\in\mathbb{R}^n\}
\end{equation*}
is called a {\em slice} (cartesian product of $n$ complex planes) of $\mathbb{O}^n_s$.
\end{defn}

The topology $\tau$ of $\mathbb{O}_s^n$ will be the subspace topology induced by the Euclidean topology of $\mathbb{O}^n$. For each $\Omega\subset\mathbb{O}^n_s$ and $I\in\mathbb{S}$, we set
\begin{equation*}
\Omega_I:=\Omega\cap\mathbb{C}_I^n
\end{equation*}
and we call it the $I$-slice (or a slice) of $\Omega$.

It is immediate to check that
\begin{equation*}
\mathbb{C}_I^n\cap\mathbb{C}_J^n=\mathbb{R}^n,\qquad\forall\ I,J\in\mathbb{S}\ \mbox{with}\ I\neq\pm J.
\end{equation*}

\begin{defn}
A set $\Omega\subset\mathbb{O}^n_s$ is called {\em axially symmetric} if for each $x,y\in\mathbb{R}^n$ and $I\in\mathbb{S}$ with $x+yI\in\Omega$,
	\begin{equation*}
	x+y\mathbb{S}\subset\Omega,
	\end{equation*}
	where $x+y\mathbb{S}:=\{x+yJ\in\mathbb{O}_s^n:x,y\in\mathbb{R}^{n}\ \mbox{and}\ J\in\mathbb{S}\}.$
\end{defn}

\section{Slice functions}\label{sc-sf}

In \cite{Ren2018001}, a class of slice regular functions is defined on the $n$-dimensional quadratic cone of octonions by slice functions and corresponding stem functions, following the slice analysis on real alternative $*$-algebras, see \cite{Ghiloni2011001}. This technique plays a fundamental role in slice analysis in one variable and was used recently in the octonionic case in a new way, connecting slice analysis with quaternionic
analysis, see \cite{Jin2020001}. In this section, we will generalize the slice functions defined in \cite{Ren2018001}.

We will use stem functions defined on the upper half complex plane $\mathbb{C}^+:=\{x+yi\in\mathbb{C}:y\ge 0\}$ identified with $\mathbb{R}\times\mathbb{R}_+$, where $\mathbb{R}_+:=[0,\infty)$. With this choice, we avoid to impose additional hypothesis on the domain and on the functions,
in order to have well posedness. Morever, we need  to extend the notion of upper half plane to $\mathbb{C}^n$. To this end, we introduce the next definition.

\begin{defn}
Given $x,y\in\mathbb{R}^n$, we say that $(x,y)\in\mathbb{R}^{2n}$ is {\em
positive} if $y_1>0$ or there is $m\in\{2,...,n\}$ such that  $y_m>0$ and
$y_\ell=0$ for all $1\leq\ell<m$. We say that the set
\begin{equation*}
\mathbb{R}^{2n}_+:=\{(x,y)\in\mathbb{R}^{2n}:(x,y)\ \mbox{is}\ \mbox{positive}\ \mbox{or}\ y=0\in\mathbb{R}^n\}
\end{equation*}
is the upper half-plane in $\mathbb{R}^{2n}$.
\end{defn}
Let us set
\begin{equation*}
\mathbb{R}^{2n}_-:=\{(x,y)\in\mathbb{R}^{2n}:(x,-y)\in\mathbb{R}^{2n}_+\}.
\end{equation*}
Then it is immediate that
\begin{equation*}
\mathbb{R}^{2n}_+\cup\mathbb{R}^{2n}_-=\mathbb{R}^{2n}\qquad\mbox{and}
\qquad\mathbb{R}^{2n}_+\cap\mathbb{R}^{2n}_-=\mathbb{R}^n\times\{0\}.
\end{equation*}
\\
In the sequel we will make use of the following useful notations: for any
$\Omega\subset\mathbb{O}_s^n$, define
\begin{equation*}
\Omega_{s_1}:=\{(x,y)\in\mathbb{R}^{2n}:\exists\ I\in\mathbb{S}\ s.t.\ x+yI\in\Omega\},
\end{equation*}
\begin{equation*}
\Omega_{s_2}:=\{(x,y)\in\mathbb{R}^{2n}:\exists\ J,K\in\mathbb{S}\ s.t.\ x+yJ,x+yK\in\Omega\},
\end{equation*}
\begin{equation*}
\Omega_{s_1}^+:=\Omega_{s_1}\cap\mathbb{R}^{2n}_+\qquad\mbox{and}\qquad\Omega_{s_2}^+:=\Omega_{s_2}\cap\mathbb{R}^{2n}_+.
\end{equation*}

We now define the slice functions in our setting:
\begin{defn}\label{df-sf}
	Let $\Omega\subset\mathbb{O}^n_s$. A function $f:\Omega\rightarrow\mathbb{O}$ is called {\em slice function} if there is $F:\Omega_{s_2}^+\rightarrow\mathbb{O}^{2\times 1}$ such that
	\begin{equation}\label{eq-s}
	f(x+yI)=(1,\, I)F(x,y)
	\end{equation}
	for any $(x,y)\in\Omega_{s_2}^+$ and any $I\in\mathbb{S}$ with $x+yI\in\Omega$.
\end{defn}

In the paper \cite{Dou2020001} the equality $(J-K)^{-1}K=-K(J-K)^{-1}$,
where $J,K$ are quaternions, is used in the proof of a general representation formula. We now replace the imaginary units $J,K$ by the left multiplication operators $L_J$, $L_K$ acting on $\mathbb{O}$ and we get the following equality:
\begin{equation}\label{eq-cjk}
(L_J-L_K)^{-1}L_J=-L_K(L_J-L_K)^{-1},\qquad\forall\ J,K\in\mathbb{S}\ \mbox{with}\ J\neq K,
\end{equation}
where
\begin{equation*}
L_I:\mathbb{O}\rightarrow\mathbb{O},\qquad o\mapsto Io,\qquad\forall\ o\in\mathbb{O}.
\end{equation*}
The validity of \eqref{eq-cjk} follows from
\begin{equation*}
L_I\circ L_I=-1,\qquad\forall\ I\in\mathbb{S},
\end{equation*}
where $1$ denotes here, and in the sequel, the identity map of $\mathbb{O}$.

It is easy to check that the following equality holds by \eqref{eq-cjk}
\begin{equation}\label{eq-mfm}
\left(\begin{matrix}
1&L_J\\1&L_K
\end{matrix}\right)^{-1}
=\left(\begin{matrix}
(L_J-L_K)^{-1}L_J&(L_K-L_J)^{-1}L_K\\(L_J-L_K)^{-1}&(L_K-L_J)^{-1}
\end{matrix}\right),
\end{equation}
for each $J,K\in\mathbb{S}$ with $J\neq K$.

In the next result we prove some equivalent statements characterizing slice functions following the ideas in quaternionic slice analysis, see \cite{Dou2020001}.
\begin{prop}\label{pr-sf}
	Let $\Omega\subset\mathbb{O}^n_s$ and $f:\Omega\rightarrow\mathbb{O}$. Then the following statements are equivalent:
	\begin{enumerate}[label=(\roman*)]
		
		\item $f$ is a slice function.
		
		\item (Stem function form) There is $F:\Omega_{s_1}\rightarrow\mathbb{O}^{2\times 1}$ such that
		\begin{equation}\label{eq-sff}
		f(x+yI)=(1,\, I)F(x,y)
		\end{equation}
		for each $(x,y)\in\Omega_{s_1}$ and $I\in\mathbb{S}$ with $x+yI\in\Omega$.
		
		\item (Matrix form) For each $(x,y)\in\mathbb{R}^{2n}$ and $I,J,K\in\mathbb{S}$ with $J\neq K$ and $x+yI,x+yJ,x+yK\in\Omega$,
		\begin{equation}\label{eq-mfs}
		f(x+yI)=(1,L_I)\left(\begin{matrix}
		1&L_J\\1&L_K
		\end{matrix}\right)^{-1}
		\left(\begin{matrix}
		f(x+yJ)\\f(x+yK)
		\end{matrix}\right).
		\end{equation}
		
		\item\label{it-sf4} (Linear form for $I$) For each $(x,y)\in\mathbb{R}^{2n}$ and $I,J,K\in\mathbb{S}$ with $J\neq K$ and $x+yI,x+yJ,x+yK\in\Omega$,
		\begin{equation}\label{eq-lfi}
		\begin{split}
		f(x+yI)=&(J-K)^{-1}[Jf(x+yJ)-Kf(x+yK)]\\&+I\{(J-K)^{-1}[f(x+yJ)-f(x+yK)]\}.
		\end{split}
		\end{equation}
		
		\item\label{it-sf5} (Linear from for $f$) For each $(x,y)\in\mathbb{R}^{2n}$ and $I,J,K\in\mathbb{S}$ with $J\neq K$ and $x+yI,x+yJ,x+yK\in\Omega$,
		\begin{equation}\label{eq-lff}
		f(x+yI)=(I-K)[(J-K)^{-1}f(x+yJ)]+(I-J)[(K-J)^{-1}f(x+yK)].
		\end{equation}
		
	\end{enumerate}
\end{prop}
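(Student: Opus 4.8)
The plan is to prove the cyclic chain (i) $\Rightarrow$ (ii) $\Rightarrow$ (iii) $\Rightarrow$ (iv) $\Rightarrow$ (v) $\Rightarrow$ (i), handling the three ``pointwise'' statements (iii)--(v) by algebra and the two ``stem function'' statements (i), (ii) by exhibiting $F$ explicitly. The governing principle is that, because $\mathbb{O}$ is non-associative, one must phrase everything through the left-multiplication operators $L_J$, whose composition \emph{is} associative, rather than through bare octonionic products; the dictionary between the two worlds is $L_J-L_K=L_{J-K}$ together with $(L_a)^{-1}=L_{a^{-1}}$, the latter being the inverse property of $\mathbb{O}$ coming from alternativity (Artin's theorem, see \cite{Schafer}). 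This is exactly what makes the bracketings in \eqref{eq-lfi} and \eqref{eq-lff} meaningful: a term like $(J-K)^{-1}[Jf(x+yJ)]$ is the operator composition $(L_J-L_K)^{-1}L_J$ applied to $f(x+yJ)$.

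The equivalences (iii) $\Leftrightarrow$ (iv) $\Leftrightarrow$ (v) are purely computational. For (iii) $\Rightarrow$ (iv) I would substitute the explicit inverse \eqref{eq-mfm} into \eqref{eq-mfs}, multiply out the row $(1,L_I)$ against the two columns, rewrite each operator expression as a bracketed octonionic product via $(L_J-L_K)^{-1}=L_{(J-K)^{-1}}$, and collect the four resulting terms into the two brackets of \eqref{eq-lfi} using only additivity of left multiplication. For (iv) $\Rightarrow$ (v) I would expand \eqref{eq-lff} by left distributivity, observe that the two terms carrying the factor $I$ already reproduce the $I$-bracket of \eqref{eq-lfi}, and reduce the remaining two terms to the first bracket of \eqref{eq-lfi} by invoking \eqref{eq-cjk} and its mirror $L_J(L_J-L_K)^{-1}=-(L_J-L_K)^{-1}L_K$, which is verified exactly as \eqref{eq-cjk}, namely by clearing the inverse and using $L_I^2=-1$.

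For (ii) $\Rightarrow$ (iii): given the stem function $F=(F_1,F_2)^{T}$ on $\Omega_{s_1}$, one has $f(x+yJ)=F_1+L_JF_2$ and $f(x+yK)=F_1+L_KF_2$, i.e.\ $\left(\begin{smallmatrix}f(x+yJ)\\f(x+yK)\end{smallmatrix}\right)=\left(\begin{smallmatrix}1&L_J\\1&L_K\end{smallmatrix}\right)F(x,y)$; since $J\neq K$ this operator matrix is invertible with inverse \eqref{eq-mfm}, and substituting $F(x,y)$ into $f(x+yI)=F_1+L_IF_2$ yields \eqref{eq-mfs}. For (i) $\Rightarrow$ (ii) I would extend the given $F$ from $\Omega_{s_2}^+$ to all of $\Omega_{s_1}$: at points of $\Omega_{s_2}$ in the lower half-plane I set $F_1(x,y)=F_1(x,-y)$ and $F_2(x,y)=-F_2(x,-y)$, the even/odd extension forced by $x+yI=x+(-y)(-I)$, while at the single-slice points of $\Omega_{s_1}\setminus\Omega_{s_2}$ I simply take $F_2=0$ and $F_1$ equal to the unique value of $f$ there; one checks $f(x+yI)=(1,I)F(x,y)$ for every admissible $I$. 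Finally, for (v) $\Rightarrow$ (i): for $(x,y)\in\Omega_{s_2}^+$ I choose $J\neq K$ with $x+yJ,x+yK\in\Omega$ and read $F_1,F_2$ off the right-hand side of \eqref{eq-lff} by distributivity; the defining relation $f(x+yI)=(1,I)F(x,y)$ for every admissible $I$ is then precisely statement (v), which gives (i) and closes the cycle. (The reverse (ii) $\Rightarrow$ (i) is immediate, being the restriction of $F$ to $\Omega_{s_2}^+\subset\Omega_{s_1}$.)

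I expect the one genuinely delicate point to be the \emph{well-definedness} of the stem function in the last step, namely that $F(x,y)$ is independent of the auxiliary pair $J\neq K$. If $J',K'$ is another admissible pair, applying (v) with $I=J'$ and with $I=K'$ gives $f(x+yJ')=F_1+J'F_2$ and $f(x+yK')=F_1+K'F_2$, so the alternative data $F_1',F_2'$ satisfy $(J'-K')(F_2'-F_2)=0$, whence $F_2'=F_2$ and $F_1'=F_1$ because $L_{J'-K'}$ is invertible ($\mathbb{O}$ being a division algebra). This cancellation, together with the discipline of replacing octonionic products by operator compositions so that the non-associative bracketings stay honest, is the crux; the half-plane bookkeeping in (i) $\Rightarrow$ (ii) is then routine once the even/odd symmetry of $F$ is noted.
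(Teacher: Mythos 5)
Your proof is correct and follows essentially the same route as the paper: the equivalences (iii)--(v) as algebraic rewritings via \eqref{eq-cjk} and \eqref{eq-mfm}, the same even/odd extension (with $F_2=0$ at single-slice points) for (i)$\Rightarrow$(ii), matrix inversion for (ii)$\Rightarrow$(iii), and closing the cycle by building a stem function pointwise from a chosen pair $J\neq K$ (the paper reads it off the matrix form (iii), you off the linear form (v), which are equivalent). The only inessential difference is your well-definedness check at the end: since Definition \ref{df-sf} only requires the existence of \emph{some} $F$, fixing one admissible pair per point (as the paper does via the axiom of choice) already suffices, so independence of the choice of $(J,K)$ need not be verified.
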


\begin{proof}
	This proof exploits ideas in \cite{Dou2020001}. The fact that statements
\eqref{eq-mfs}, \eqref{eq-lfi} and \eqref{eq-lff} are equivalent follows by \eqref{eq-cjk} and \eqref{eq-mfm}. Then (iii), (iv) and (v) are also equivalent since they are basically a rewriting one of the others. We prove the equivalence of the remaining assertions.
	
	(i)$\Rightarrow$(ii). Suppose that $f$ is a slice function. Then, there is a function $G=(G_1,G_2)^T:\Omega_{s_2}^+\rightarrow\mathbb{O}^{2\times 1}$ such that
	\begin{equation*}
	f(x+yI)=(1,\, I)G(x,y)
	\end{equation*}
	for any $(x,y)\in\mathbb{R}^{2n}_+$ and $I\in\mathbb{S}$ with $x+yI\in\Omega$.

We define the function $F:\, \Omega_{s_1}\rightarrow\mathbb{O}^{2\times 1}$ by
\begin{equation*}
		F(x,y):=\left\{\begin{split}
			&(G_1,G_2)^T(x,y),&&\qquad (x,y)\in\Omega_{s_2}^+,
			\\&(G_1,-G_2)^T(x,-y),&&\qquad (x,y)\in\Omega_{s_2}^-\ \mbox{with}\ y\neq 0,
			\\&(f(x+yI_{x,y}),0)^T,&&\qquad (x,y)\in\Omega_{s_1}\backslash\Omega_{s_2},
		\end{split}\right.
	\end{equation*}

	Direct computations show that $F$ satisfies \eqref{eq-sff}, where $I_{x,y}$ is the unique imaginary unit $I\in\mathbb{S}$ such that $x+yI\in\Omega$. It follows that (ii) holds.
	
	(ii)$\Rightarrow$(iii). Suppose that $f$ satisfies (ii) and consider any
$(x,y)\in\mathbb{R}^{2n}$ and $I,J,K\in\mathbb{S}$ with $J\neq K$ and $x+yJ,x+yJ,x+yK\in\Omega$. Then \eqref{eq-sff} gives
	\begin{equation*}
	\left(\begin{matrix}
	f(x+yJ)\\f(x+yK)
	\end{matrix}\right)
	=\left(\begin{matrix}
	1&L_J\\1&L_K
	\end{matrix}\right)F(x,y),
	\end{equation*}
	from which we have
	\begin{equation}\label{eq-ms}
	F(x,y)=\left(\begin{matrix}
	1&L_J\\1&L_K
	\end{matrix}\right)^{-1}
	\left(\begin{matrix}
	f(x+yJ)\\f(x+yK)
	\end{matrix}\right).
	\end{equation}
	Then \eqref{eq-mfs} follows directly from \eqref{eq-sff} and \eqref{eq-ms}, so that (iii) holds.
	
	(iii)$\Rightarrow$(i). Suppose that $f$ satisfies (iii).  For any $(x,y)\in\Omega_{s_2}$, by the axiom of choice, there is $(J_{x,y},K_{x,y})\in\mathbb{S}$ with $J_{x,y}\neq K_{x,y}$ such that $x+yJ_{x,y},x+yK_{x,y}\in\Omega$. The function $F:\Omega_{s_2}\rightarrow\mathbb{O}^{2\times 1}$ defined by
	\begin{equation*}
	F(x,y):=\left(\begin{matrix}
	1&L_{J_{x,y}}\\1&L_{K_{x,y}}
	\end{matrix}\right)^{-1}
	\left(\begin{matrix}
	f(x+yJ_{x,y})\\f(x+yK_{x,y})
	\end{matrix}\right),\qquad\forall\ (x,y)\in\Omega_{s_2},
	\end{equation*}
	satisfies \eqref{eq-s}. Therefore $f$ is a slice function.
\end{proof}
\begin{defn}
Let $\Omega\subset\mathbb{O}^n_s$ and $f:\, \Omega\rightarrow\mathbb{O}$.
The function $F:\, \Omega_{s_1}\rightarrow\mathbb{O}^{2\times 1}$ satisfying  \eqref{eq-sff} is called {\em stem function}.
\end{defn}
\begin{rmk}
The linear forms for $I$ and for $f$ as in (iv), (v) appeared in \cite{Colombo2009001} and \cite{Ren2018001}.
\end{rmk}
In \cite{Ren2018001} the authors used a different notion of slice functions starting from functions defined on open subsets on $\mathbb C^n$ which
are intrinsic and then defining a slice function as a function induced by
an intrinsic one.
\begin{prop}
  \label{rm-se}
	The class of slice functions in Definition \ref{df-sf} and the class of slice functions in \cite{Ren2018001}  are equivalent on axially symmetric
domains.
\end{prop}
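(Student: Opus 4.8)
The plan is to show that the two definitions of slice function coincide on axially symmetric domains by exhibiting a bijective correspondence between the stem functions in each framework. The statement to prove, Proposition \ref{rm-se}, asserts equivalence of the class from Definition \ref{df-sf} with the Ren--Yang class of \cite{Ren2018001}, where the latter is built from intrinsic functions on open subsets of $\mathbb{C}^n$. The natural strategy is a double inclusion: first show every slice function in the sense of \cite{Ren2018001} is a slice function in the sense of Definition \ref{df-sf}, then conversely, using axial symmetry crucially in the second direction.

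\medskip

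First I would recall the Ren--Yang construction: an intrinsic stem function $\tilde F$ on an open axially symmetric set in $\mathbb{C}^n$ decomposes as $\tilde F = \tilde F_1 + i \tilde F_2$ with $\tilde F_1, \tilde F_2$ satisfying the even/odd symmetry conditions $\tilde F_1(x,-y)=\tilde F_1(x,y)$ and $\tilde F_2(x,-y)=-\tilde F_2(x,y)$, and the induced slice function is $f(x+yI)=\tilde F_1(x,y)+I\,\tilde F_2(x,y)$. Given such an $f$, I would set $F(x,y):=(\tilde F_1(x,y),\,\tilde F_2(x,y))^T$ on $\Omega_{s_1}$ and verify directly that this $F$ satisfies \eqref{eq-sff}, so that $f$ is a slice function in the sense of Proposition \ref{pr-sf}(ii), hence (by the equivalences already proved) a slice function per Definition \ref{df-sf}. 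This direction is essentially a matter of matching notation and requires no symmetry beyond what the intrinsic hypothesis already supplies.

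\medskip

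For the converse, suppose $f$ is a slice function per Definition \ref{df-sf} on an axially symmetric $\Omega$, with stem function $F=(F_1,F_2)^T$ furnished by Proposition \ref{pr-sf}(ii). The key point is that axial symmetry forces $\Omega_{s_1}=\Omega_{s_2}$: if $x+yI\in\Omega$ for some $I\in\mathbb{S}$, then $x+y\mathbb{S}\subset\Omega$, so in particular there exist two distinct units $J\neq K$ with $x+yJ,x+yK\in\Omega$ (taking $y\neq 0$; the $y=0$ case lies in $\mathbb{R}^n$ and is handled separately). This lets me reconstruct $F$ via the matrix-form \eqref{eq-mfs} consistently and, more importantly, establish that $F$ is well defined independently of the chosen units and that $(F_1,F_2)$ inherit the even/odd symmetry in $y$ needed to define a genuine intrinsic stem function on a symmetric subset of $\mathbb{C}^n$. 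I would then check intrinsicness: that $F$ takes values compatible with the reality/complex-structure constraints of the Ren--Yang definition, which follows from comparing \eqref{eq-lfi} evaluated at conjugate pairs $I$ and $-I$.

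\medskip

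The main obstacle I anticipate is precisely the verification that the stem function $F$ from Definition \ref{df-sf} descends to a well-defined \emph{intrinsic} function on $\mathbb{C}^n$ in the Ren--Yang sense. Our $F$ is a priori only defined on $\Omega_{s_1}\subset\mathbb{R}^{2n}_+$ (the upper half-plane), whereas the intrinsic formalism requires a function on a full symmetric open subset of $\mathbb{C}^n$ with the even/odd symmetry built in; bridging these requires using axial symmetry to extend $F$ across $\mathbb{R}^n\times\{0\}$ and to control its behavior under $y\mapsto -y$, then confirming this extension respects the complex-intrinsic conditions. Once $\Omega_{s_1}=\Omega_{s_2}$ is in hand, the extension is forced and the symmetry conditions follow from uniqueness of the stem function, so the obstacle is conceptual bookkeeping rather than a deep difficulty; nonetheless it is where the axial-symmetry hypothesis is genuinely used and where the two frameworks are reconciled.
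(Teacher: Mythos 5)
Your proposal is correct and follows essentially the same route as the paper: one direction restricts the components of the intrinsic Ren--Yang stem function to the upper half-plane, and the converse extends the one-sided stem function $F=(F_1,F_2)^T$ across $\mathbb{R}^n\times\{0\}$ by the reflection $(x,y)\mapsto(F_1(x,-y),-F_2(x,-y))$, which is intrinsic by construction (the paper's formula \eqref{eq-stc}). Your explicit observation that axial symmetry forces $\Omega_{s_1}=\Omega_{s_2}$ merely makes explicit what the paper uses implicitly, so the two arguments coincide in substance.
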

\begin{proof}
	Let $\Omega$ be an axially symmetric domain in $\mathbb{O}^n_s$. In the proof we will make use of the map
	\begin{equation*}
		\begin{split}
			\mathcal{P}_\mathbb{R}:\quad\mathbb{C}^n\quad &\xlongrightarrow[\hskip1cm]{}\quad \mathbb{R}^{2n},
			\\ x+yi\ &\shortmid\!\xlongrightarrow[\hskip1cm]{}\ (x,y).
		\end{split}
	\end{equation*}

	(i) Suppose $f:\Omega\rightarrow\mathbb{O}$ is a slice function according to the definition in \cite{Ren2018001}. Then there is a stem function $\mathcal{F}:\mathcal{P}_\mathbb{R}^{-1}(\Omega_{s_2})\rightarrow\mathbb{O}_\mathbb{C}$ such that
	\begin{equation}\label{eq-st}
	f(x+yI)=\mathcal{F}_1(x+yi)+I\mathcal{F}_2(x+yi)
	\end{equation}
	for each $x,y\in\mathbb{R}^n$ and $I\in\mathbb{S}$ with $x+yI\in\Omega$,
where $\mathcal{F}_1,\mathcal{F}_2:\mathcal{P}_\mathbb{R}^{-1}(\Omega_{s_2})\rightarrow\mathbb{O}$ with
	\begin{equation}\label{eq-p}
	\mathcal{F}=1\otimes \mathcal{F}_1+i\otimes \mathcal{F}_2.
	\end{equation}
	Then the function $F:\Omega_{s_2}^+\rightarrow\mathbb{O}^{2\times 1}$ defined by
	\begin{equation*}
	F(x,y):=(\mathcal{F}_1(x+yi),\mathcal{F}_2(x+yi))^T,\qquad\forall\ (x,y)\in\Omega_{s_2}^+,
	\end{equation*}
	satisfies \eqref{df-sf}, hence $f$ is slice in the sense of Definition \ref{df-sf}.
	
	(ii) Suppose that $f:\Omega\rightarrow\mathbb{O}$ is slice function according to Definition \ref{df-sf}. Then there is a function $F=(F_1,F_2)^T:\Omega_{s_2}^+\rightarrow\mathbb{O}^{2\times 1}$ satisfying \eqref{eq-s}, moreover $\mathcal{F}:\mathcal{P}_\mathbb{R}^{-1}(\Omega_{s_2})\rightarrow\mathbb{O}^{2\times 1}$ defined by
	\begin{equation}\label{eq-stc}
	\mathcal{F}(x+yi):=\left\{\begin{split}
	&1\otimes F_1(x,y)+i\otimes F_2(x,y),&&\qquad(x,y)\in\Omega_{s_1}^+,
	\\&1\otimes F_1(x,-y)-i\otimes F_2(x,-y),&&\qquad \mbox{otherwise},
	\end{split}\right.
	\end{equation}
	satisfies \eqref{eq-st}, as it can be verified by directly calculations.
By \eqref{eq-stc}, it is immediate to check that $\mathcal{F}$ is intrinsic, i.e.
	\begin{equation*}
	\mathcal{F}(x-yi)=1\otimes\mathcal{F}_1(x+yi)-i\otimes\mathcal{F}_2(x+yi),
	\end{equation*}
	 where $\mathcal{F}_1$ and $\mathcal{F}_2$ are defined by \eqref{eq-p}. Therefore, $f$ is a slice function in the sense of \cite{Ren2018001}.
\end{proof}

\begin{rmk}
	Note that the slice functions in \cite{Ren2018001} are defined on axially symmetric domains in $\mathbb{O}_s^n$, whereas Definition \ref{df-sf} allows to consider any subset of $\mathbb{O}_s^n$. Thus the class of slice
functions defined in Definition \ref{df-sf} is a generalization of the one considered in \cite{Ren2018001}.
\end{rmk}

By \cite[Proposition 2.5 and Corollary 2.6]{Ren2018001} and Proposition \ref{rm-se}, slice functions satisfies two representation formulas. These formulas are extended to our case, as consequences of Proposition \ref{pr-sf}.

\begin{cor}\label{co-grf}(General Representation Formula, linear form for
the functions)
	Let $\Omega\subset\mathbb{O}_s^n$ and $f:\Omega\rightarrow\mathbb{O}$ a slice function. Then
	\begin{equation*}
	f(x+yI)=(I-K)[(J-K)^{-1}f(x+yJ)]+(I-J)[(K-J)^{-1}f(x+yK)].
	\end{equation*}
	for each $(x,y)\in\mathbb{R}^{2n}$ and $I,J,K\in\mathbb{S}$ with $J\neq K$ and $x+yI,x+yJ,x+yK\in\Omega$.
\end{cor}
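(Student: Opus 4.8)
The plan is to observe that the asserted formula is, letter for letter, statement~(v) (the ``linear form for $f$'', equation~\eqref{eq-lff}) of Proposition~\ref{pr-sf}, carrying exactly the same running hypotheses: $(x,y)\in\mathbb{R}^{2n}$, units $I,J,K\in\mathbb{S}$ with $J\neq K$, and $x+yI,\,x+yJ,\,x+yK\in\Omega$. Hence no fresh computation is needed; the entire content of the corollary is already packaged inside the proposition, and the only task is to invoke the correct implication.

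Concretely, first I would recall that $f$ is assumed to be a slice function, which is precisely statement~(i) of Proposition~\ref{pr-sf}. Next I would chain the equivalences established there: the proof of the proposition gives $(\mathrm{i})\Rightarrow(\mathrm{ii})\Rightarrow(\mathrm{iii})\Rightarrow(\mathrm{i})$, so (i) and (iii) are equivalent, while (iii), (iv) and (v) are mutually equivalent through the identities~\eqref{eq-cjk} and~\eqref{eq-mfm} (they are rewritings of one another obtained by inverting the matrix $\bigl(\begin{smallmatrix}1&L_J\\1&L_K\end{smallmatrix}\bigr)$). Composing these yields $(\mathrm{i})\Leftrightarrow(\mathrm{v})$, and therefore the slice function $f$ satisfies~\eqref{eq-lff}, which is exactly the displayed formula.

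There is essentially no genuine obstacle: the statement is a corollary in the strict sense, a direct specialization of Proposition~\ref{pr-sf}(v). The only point deserving a word of care is to check that the quantifier ranges match — in particular that the corollary does not covertly demand $I\neq J$, $I\neq K$, or positivity of $(x,y)$ — and indeed both the proposition and the corollary impose only $J\neq K$ together with membership of the three sliced points in $\Omega$. With this matching confirmed, the proof collapses to a single citation: by Proposition~\ref{pr-sf} one has $(\mathrm{i})\Leftrightarrow(\mathrm{v})$, so the formula holds.
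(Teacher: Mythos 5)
Your proof is correct and matches the paper's own argument, which reads in full: ``It follows from Proposition \ref{pr-sf} (i) and (v).'' Your additional care in tracing the chain of equivalences and checking that the hypotheses ($J\neq K$ only, with all three points in $\Omega$) line up exactly is sound but not a departure from the paper's route.
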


\begin{proof}
It follows from Proposition \ref{pr-sf} (i) and (v).
\end{proof}

\begin{cor}\label{co-rf}(Representation Formula, linear form for imaginary units)
	Let $\Omega\subset\mathbb{O}_s^n$ and $f:\Omega\rightarrow\mathbb{O}$ a slice function. Then
	\begin{equation}\label{eq-rf}
	f(x+yI)=\frac{1}{2}[f(x+yJ)+f(x-yJ)]-\frac{I}{2}\{J[f(x+yJ)-f(x-yJ)]\}.
	\end{equation}
	for each $(x,y)\in\mathbb{R}^{2n}$ and $I,J\in\mathbb{S}$ with $x+yI,x\pm yJ\in\Omega$.
\end{cor}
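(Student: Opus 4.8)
The plan is to read \eqref{eq-rf} off the linear form for $I$, namely equation \eqref{eq-lfi} in Proposition~\ref{pr-sf}, by specializing the auxiliary imaginary unit to $K=-J$. Since $f$ is a slice function, Proposition~\ref{pr-sf} tells us that \eqref{eq-lfi} holds for every admissible triple $(I,J,K)$ with $J\neq K$. The observation that makes the general formula collapse into the stated symmetric shape is that $-J\in\mathbb{S}$ whenever $J\in\mathbb{S}$ (because $(-J)^2=J^2=-1$), that $-J\neq J$ since $J\neq 0$, and that $x+y(-J)=x-yJ$. Hence the hypotheses $x+yI,\,x\pm yJ\in\Omega$ of the corollary are precisely what is needed to invoke \eqref{eq-lfi} with this choice of $K$.

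Next I would substitute $K=-J$ into \eqref{eq-lfi} and simplify the two scalar coefficients. The key elementary fact is that $J^{-1}=-J$ for every $J\in\mathbb{S}$ (from $J^2=-1$), so that $(J-K)^{-1}=(2J)^{-1}=-\tfrac{J}{2}$. Since $f(x+yK)=f(x-yJ)$ and $-Kf(x-yJ)=Jf(x-yJ)$, the first summand of \eqref{eq-lfi} becomes
\begin{equation*}
-\tfrac{J}{2}\bigl[Jf(x+yJ)+Jf(x-yJ)\bigr],
\end{equation*}
while the second summand becomes
\begin{equation*}
I\Bigl\{-\tfrac{J}{2}\bigl[f(x+yJ)-f(x-yJ)\bigr]\Bigr\}.
\end{equation*}

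The only point that demands care is the non-associativity of $\mathbb{O}$: arbitrary triple products may not be regrouped, so I would use only two harmless facts, namely that real scalars are central and associate with every product, and that the alternative law gives $J(Ja)=(JJ)a=-a$ for all $a\in\mathbb{O}$. Distributing $-\tfrac{J}{2}$ over the bracket in the first summand and applying the alternative law to each term turns $-\tfrac{1}{2}J\bigl(Jf(x\pm yJ)\bigr)$ into $\tfrac{1}{2}f(x\pm yJ)$, producing the average $\tfrac{1}{2}[f(x+yJ)+f(x-yJ)]$. In the second summand, pulling the real factor $-\tfrac{1}{2}$ out from between $I$ and the product (which is legitimate because it is a scalar) leaves the grouping $I(J\,\cdot\,)$ untouched and yields exactly $-\tfrac{I}{2}\{J[f(x+yJ)-f(x-yJ)]\}$. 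Adding the two pieces gives \eqref{eq-rf}. I expect the alternative-law step to be the only genuinely non-formal part of the argument, the rest being bookkeeping that must simply respect the bracketing inherited from \eqref{eq-lfi}.
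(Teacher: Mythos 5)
Your proof is correct and follows essentially the same route as the paper: the paper obtains \eqref{eq-rf} from the general representation formula of Corollary \ref{co-grf} (i.e.\ Proposition \ref{pr-sf}(v)) by the very same specialization of the third unit to $K=-J$, while you start from the equivalent linear form \eqref{eq-lfi} of Proposition \ref{pr-sf}(iv); both reductions rest on the identical algebraic facts $J^{-1}=-J$, centrality of real scalars, and the left alternative law $J(Ja)=-a$. Note only that the paper's one-line proof says ``by setting $J=K$'', which is an evident typo for $K=-J$ (as confirmed by the remark after Corollary \ref{cor-repr}); your substitution is the intended one.
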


\begin{proof}
	It follows from Corollary \ref{co-grf}, by setting $J=K$.
\end{proof}

\section{Strong Slice regular functions}\label{sc-srf}
We recall that we will refer to the notion  given in \cite{Ren2018001} of
slice regular functions on axially symmetric open sets of $\mathbb{O}_s^n$ as strong slice regularity.
In this section we generalize strong slice regular functions from axially
symmetric open sets of $\mathbb{O}_s^n$ to any subsets of $\mathbb{O}_s^n$.

\begin{defn}
	Let $V\subset\mathbb{R}^{2n}$. We say that a  function $F:\, V\rightarrow\mathbb{O}^{2\times 1}$ is holomorphic if there is an open set $U$ in $\mathbb{R}^{2n}$ and a function $G:\, U\rightarrow\mathbb{O}^{2\times 1}$ with continuous partial derivatives such that $V\subset U$, $F|_V=G|_V$
and
	\begin{equation*}
	\frac{1}{2}(\frac{\partial}{\partial_{x_m}}+\sigma\frac{\partial}{\partial_{y_m}})G(x,y)=0,
	\end{equation*}
	for each $m\in\{1,2,...,n\}$ and $x=(x_1,...,x_n),y=(y_1,...,y_n)\in\mathbb{R}^n$ with $(x,y)\in U$, where
	\begin{equation*}
	\sigma:=\begin{pmatrix}
	0&-1\\1&0
	\end{pmatrix}
	\end{equation*}
	is a complex structure on $\mathbb{O}^{2\times 1}$ (resp. $\mathbb{R}^{2\times 1}$).
\end{defn}

\begin{defn}\label{df-sr}
	Let $\Omega\subset\mathbb{O}_s^n$. A function $f:\Omega\rightarrow\mathbb{O}$ is called strong slice regular if there is a holomorphic stem function $F:\Omega_{s_1}\rightarrow\mathbb{O}^{2\times 1}$ of $f$ i.e. there is a holomorphic $F:\Omega_{s_1}\rightarrow\mathbb{O}^{2\times 1}$ such that
	\[
	f(x+yI)=(1,\, I)F(x,y)
	\]	for any $(x,y)\in\Omega_{s_1}$ and any $I\in\mathbb{S}$ with $x+yI\in\Omega$.
\end{defn}

We define holomorphic functions on each slice $\mathbb{C}_I$ following the definition in complex analysis in the case of  vector-valued functions in several variables.

\begin{defn}
	Let $I\in\mathbb{S}$ and  $\Omega$ be an open set in $\mathbb{C}_I^n$. A
function $f:\Omega\rightarrow\mathbb{O}$ is said to be holomorphic, if $f$ has continuous partial derivatives and satisfies
	\begin{equation*}
	\frac{1}{2}(\frac{\partial}{\partial x_m}+I\frac{\partial}{\partial y_m})f(x+yI)=0
	\end{equation*}
	for all $m=1,2,...,n$, $x=(x_1,x_2,...,x_n),y=(y_1,y_2,...,y_n)\in\mathbb{R}^n$ with $x+yI\in\Omega$.
\end{defn}

\begin{prop}\label{pr-ws}
	Let $\Omega\subset\mathbb{O}_s^n$, $\Omega_I=\Omega\cap\mathbb{C}_I^n$
and $f:\Omega\rightarrow\mathbb{O}$ be a strong slice regular function. Then $f_I:=f|_{\Omega_I}$ is holomorphic for any $I\in\mathbb{S}$ with $\Omega_I\in\tau(\mathbb{C}_I^n)$.
\end{prop}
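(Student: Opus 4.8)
The plan is to unwind both definitions and reduce the claim to the componentwise Cauchy--Riemann equations already encoded in the holomorphy of the stem function. Write $F=(F_1,F_2)^T$. By Definition \ref{df-sr}, strong slice regularity of $f$ provides an open set $U\subset\mathbb{R}^{2n}$ with $\Omega_{s_1}\subset U$ and a function $G=(G_1,G_2)^T\colon U\to\mathbb{O}^{2\times 1}$ with continuous partial derivatives, agreeing with $F$ on $\Omega_{s_1}$ and satisfying $\tfrac12(\partial_{x_m}+\sigma\,\partial_{y_m})G=0$ for every $m$. Spelling out the action of $\sigma=\begin{pmatrix} 0 & -1 \\ 1 & 0 \end{pmatrix}$, this single matrix identity is exactly the pair of scalar equations $\partial_{x_m}G_1=\partial_{y_m}G_2$ and $\partial_{x_m}G_2=-\partial_{y_m}G_1$ on $U$, which is what the whole computation will eventually invoke.

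Next I would localize to the slice $\mathbb{C}_I^n$. Fixing $I\in\mathbb{S}$ with $\Omega_I$ open, the real-linear bijection $(x,y)\mapsto x+yI$ identifies $\mathbb{C}_I^n$ with $\mathbb{R}^{2n}$, and under it $\Omega_I$ corresponds to $V_I:=\{(x,y)\in\mathbb{R}^{2n}:x+yI\in\Omega\}$, which is open precisely because $\Omega_I\in\tau(\mathbb{C}_I^n)$. The key bookkeeping point is that every $(x,y)\in V_I$ lies in $\Omega_{s_1}$ (take $I$ itself as the witness), so $V_I\subset\Omega_{s_1}\subset U$ and $F=G$ on $V_I$. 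Consequently $f_I(x+yI)=F_1(x,y)+I\,F_2(x,y)=G_1(x,y)+I\,G_2(x,y)$ on the open set $V_I$, and since $G_1,G_2$ have continuous partial derivatives on $U$, so does $f_I$; this is the first requirement in the definition of holomorphy on $\mathbb{C}_I^n$.

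It then remains to verify the defining PDE. Differentiating $f_I=G_1+I\,G_2$ and applying the Cauchy--Riemann operator gives
\begin{equation*}
\tfrac12(\partial_{x_m}+I\,\partial_{y_m})f_I=\tfrac12\big[(\partial_{x_m}G_1+I\,\partial_{x_m}G_2)+I\,(\partial_{y_m}G_1+I\,\partial_{y_m}G_2)\big].
\end{equation*}
Here one must be careful with the non-associativity of $\mathbb{O}$: the term $I\,(I\,\partial_{y_m}G_2)$ has to be simplified through the alternative law $I(Ia)=I^2a=-a$, valid for every $a\in\mathbb{O}$, rather than by naive associativity. After this simplification the right-hand side becomes $\tfrac12[(\partial_{x_m}G_1-\partial_{y_m}G_2)+I(\partial_{x_m}G_2+\partial_{y_m}G_1)]$, and both bracketed combinations vanish by the two scalar Cauchy--Riemann equations from the first paragraph. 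Hence $f_I$ is holomorphic. I expect the only genuine subtlety to be this use of alternativity in place of associativity; the openness bookkeeping $V_I\subset\Omega_{s_1}\subset U$ and the identification of $\sigma$-holomorphy with the scalar Cauchy--Riemann equations are routine.
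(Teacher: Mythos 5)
Your proof is correct and follows essentially the same route as the paper's: both translate the $\sigma$-Cauchy--Riemann equations of the stem function into the $I$-Cauchy--Riemann equations for $f_I$, the paper doing this compactly via the matrix identity $I(1,I)=(1,I)\sigma$ and you componentwise. Your write-up is in fact slightly more careful than the paper's, which leaves implicit both the bookkeeping through the extension $G$ on the open set $U\supset\Omega_{s_1}$ and the use of alternativity ($I(Ia)=-a$) hidden in applying that matrix identity to an octonion-valued column vector.
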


\begin{proof}
	Let $F:\Omega_{s_1}\rightarrow\mathbb{O}^{2\times 1}$ be a holomorphic stem function of $f$. Since $F$ has continuous partial derivatives, so is $f$ by \eqref{eq-sff}. From the equality
	\[
	I(1,I)=(1,I)\sigma.
	\]
	and \eqref{eq-sff} it follows that
	\begin{equation*}
	\frac{1}{2}(\frac{\partial}{\partial_{x_m}}+I\frac{\partial}{\partial_{y_m}})f(x+yI)=(1,I)[\frac{1}{2}(\frac{\partial}{\partial_{x_m}}+\sigma\frac{\partial}{\partial_{y_m}})]F(x+yI)=0
	\end{equation*}
	for all $m\in\{1,2,...,n\}$, $(x,y)\in\Omega_{s_1}$ and $I\in\mathbb{S}$
with $x+yI\in\Omega$. It is clear that $f_I$ is holomorphic.
\end{proof}

\section{Weak slice regular functions}\label{sc-ws}

In this section, we generalize the slice-topology on $\mathbb{H}$ (see \cite{Dou2020001}) to $\mathbb{O}_s^n$. Moreover, we will define another class of slice regular functions that we call weak slice regular, following
the terminology in \cite{Dou2020001}. This class of functions is defined by imposing the condition of being holomorphic on each slice $\mathbb{C}_I$, $I\in\mathbb{C}$, instead of requiring that the stem functions are holomorphic. As it is well known, these two approaches give, in general, different classes of functions, unless one imposes additional conditions. The two classes are same on some axially symmetric sets. We now set
$$\tau_s(\mathbb{O}_s^n):=\{U\subset\mathbb{O}_s^n:U_I\in\tau(\mathbb{C}_I^n),\ \forall\ I\in\mathbb{S}\}.$$
It is easy to check that $\tau_s$ is a topology on $\mathbb{O}_s^n$, thus
leading to the following:
\begin{defn}
	We call $\tau_s$ the slice-topology on $\mathbb{O}_s^n$, where $\tau(\mathbb{C}_I^n)$ is the Euclidean topology on $\mathbb{C}^n_I$.
\end{defn}

 In the sequel, open sets, connectedness and paths in the slice-topology are called slice-open sets, slice-connectedness and slice-paths, respectively, and similarly for other notions.

\begin{defn}
	Let $\Omega$ be a slice-open set in $\mathbb{O}^n_s$. A function $f:\Omega\rightarrow\mathbb{O}$ is called weak slice regular, if for any $I\in\mathbb{S}$, the restriction $f_I:=f|_{\Omega_I}$ is holomorphic.
\end{defn}

\begin{rmk}\label{rm-sw}
	By Proposition \ref{pr-ws}, any strong slice regular functions defined on a slice-open set is weak slice regular.
\end{rmk}

\begin{defn}
The set	$\{I,J,K\}$ is called a s-basis of $\mathbb{O}$, if $I,J,K\in\mathbb{S}$ and
	\begin{equation*}
		\{1,I,J,IJ,K,JK,IK,IJK\}
	\end{equation*}
	is a basis of $\mathbb{O}$ as a real algebra.
\end{defn}

By decomposing the values of a function using a fixed s-basis, we can prove the following:

\begin{lem}\label{lm-sp}
	(Splitting Lemma)
	Let $f$ be a function defined on a slice-open subset $\Omega$ of $\mathbb{O}_s^n$. Then $f$ is weak slice regular if and only if for any s-basis $\{I,J,K\}$, there are four complex-valued holomorphic functions $F_1,F_2,F_3,F_4: \Omega_I\rightarrow\mathbb{C}_I$ such that
	\begin{equation*}
	f_I=F_1+F_2J+F_3K+F_4(JK).
	\end{equation*}
\end{lem}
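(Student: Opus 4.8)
The plan is to reinterpret an s-basis $\{I,J,K\}$ as exhibiting $\mathbb O$ as a free left module of rank $4$ over $\mathbb C_I$, and then to show that the slice Cauchy--Riemann operator acts diagonally with respect to this module structure. Once this is set up, weak slice regularity of $f_I$ becomes equivalent, componentwise, to the holomorphy of the four coordinate functions. The only genuine difficulty is bookkeeping the non-associativity of $\mathbb O$; I will keep every product manipulation confined to a subalgebra generated by two elements, which is associative by Artin's theorem since $\mathbb O$ is alternative.

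First I would record the module decomposition. Since $\mathbb O$ is alternative, $L_I\circ L_I=-1$ (already noted in the excerpt), so left multiplication $L_I$ is a complex structure turning $\mathbb O$ into a $\mathbb C_I$-vector space, with $\mathbb C_I$ acting on the left. The definition of s-basis says precisely that $\{1,I,J,IJ,K,IK,JK,I(JK)\}$ is an $\mathbb R$-basis, which is equivalent to $\{1,J,K,JK\}$ being a left $\mathbb C_I$-basis of $\mathbb O$. Hence
\begin{equation*}
\mathbb O=\mathbb C_I\oplus\mathbb C_I J\oplus\mathbb C_I K\oplus\mathbb C_I(JK),
\end{equation*}
and every value $f_I(x+yI)$ can be written uniquely as $F_1+F_2J+F_3K+F_4(JK)$ with $F_1,\dots,F_4\in\mathbb C_I$. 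The coordinate functions arise by composing $f_I$ with the $\mathbb R$-linear projection onto each summand and right-multiplying by the inverse of the corresponding basis element; for instance, if $\pi$ is the projection onto $\mathbb C_I(JK)$ then $F_4=(\pi f_I)(JK)^{-1}$, and $(z(JK))(JK)^{-1}=z$ for $z\in\mathbb C_I$ because $I$ and $JK$ generate an associative subalgebra. In particular the $F_k$ inherit continuity of the partial derivatives of $f_I$, and conversely a holomorphic decomposition yields a $C^1$ function $f_I$.

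Next I would transport holomorphy through this decomposition. Writing $\overline\partial_m:=\tfrac12(\partial_{x_m}+I\partial_{y_m})$ for the slice Cauchy--Riemann operator, I claim
\begin{equation*}
\overline\partial_m f_I=(\overline\partial_m F_1)+(\overline\partial_m F_2)J+(\overline\partial_m F_3)K+(\overline\partial_m F_4)(JK).
\end{equation*}
Differentiation is $\mathbb R$-linear and commutes with right multiplication by the constants $J,K,JK$, so it suffices to pull the factor $I$ in $\overline\partial_m$ through, i.e.\ to check $I(wJ)=(Iw)J$ for $w\in\mathbb C_I$, and likewise for $K$ and $JK$. Each such identity involves only the two elements $I$ and one of $J,K,JK$, which generate an associative subalgebra by Artin's theorem, so it holds. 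Since $\overline\partial_m F_k\in\mathbb C_I$, the right-hand side is again a decomposition along the direct sum, and by uniqueness $\overline\partial_m f_I=0$ for all $m$ if and only if $\overline\partial_m F_k=0$ for all $m$ and all $k$; that is, if and only if each $F_k$ is holomorphic. This settles the equivalence for the fixed $I$ belonging to the s-basis, giving both implications.

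Finally, to match the quantifiers in the statement I would invoke the elementary fact that every $I\in\mathbb S$ can be completed to an s-basis $\{I,J,K\}$; this lets the backward implication conclude holomorphy of $f_I$ on \emph{every} slice, which is exactly weak slice regularity. I expect the main obstacle to be precisely the disciplined handling of non-associativity: one must verify that no step ever requires associating three genuinely independent units, and that everything reduces to the pairwise associative subalgebras supplied by Artin's theorem.
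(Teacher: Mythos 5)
Your proposal is correct and follows essentially the same route as the paper: decompose $f_I$ along $\mathbb{O}=\mathbb{C}_I\oplus\mathbb{C}_IJ\oplus\mathbb{C}_IK\oplus\mathbb{C}_I(JK)$, show the slice Cauchy--Riemann operator $\overline\partial_m$ acts componentwise on this direct sum, and conclude both implications from uniqueness of the decomposition. The only differences are that you make explicit two points the paper leaves implicit --- the Artin-theorem justification for pulling $I$ through products like $I(wJ)=(Iw)J$, and the completion of an arbitrary $I\in\mathbb{S}$ to an s-basis needed for the backward implication --- which strengthens rather than changes the argument.
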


\begin{proof}
	(i) Suppose that $f$ is weak slice regular. Then for each s-basis $\{I,J,K\}$, there are four complex-valued functions $F_1,F_2,F_3,F_4: \Omega_I\rightarrow\mathbb{C}_I$ such that
	\begin{equation*}
	f_I=F_1+F_2J+F_3K+F_4(JK).
	\end{equation*}
	If $f$ is weak slice regular, then
		\begin{equation*}
		\begin{split}
			&\overline{\partial_I} F_1+\left[\overline{\partial_I} F_2\right]J+\left[\overline{\partial_I} F_3\right]K+\left[\overline{\partial_I} F_4\right](JK)
			\\=&\overline{\partial_I} F_1+\overline{\partial_I} [F_2J]+\overline{\partial_I} [F_3K]+\overline{\partial_I} [F_4(JK)]
			=\overline{\partial_I} f_I=0,
		\end{split}
	\end{equation*}
	where
	\begin{equation*}
	\overline{\partial_I}:=\begin{pmatrix}
	\frac{1}{2}(\frac{\partial}{\partial x_1}+I\frac{\partial}{\partial y_1})
	\\\vdots
	\\\frac{1}{2}(\frac{\partial}{\partial x_n}+I\frac{\partial}{\partial y_n}).
	\end{pmatrix}
	\end{equation*}
	Since $\mathbb{O}=\mathbb{C}_{I}\oplus\mathbb{C}_{I}J\oplus\mathbb{C}_{I}K\oplus\mathbb{C}_{I}(JK)$, we have
	\begin{equation*}
	\overline{\partial_I} F_1=\overline{\partial_I} F_2=\overline{\partial_I} F_3=\overline{\partial_I} F_4=0.
	\end{equation*}
	It follows that $F_1,F_2,F_3,F_4$ are holomorphic.
	
	(ii) Suppose that for any s-basis $\{I,J,K\}$,  there are four complex-valued holomorphic functions $F_1,F_2,F_3,F_4: \Omega_I\rightarrow\mathbb{C}_I$ such that $f_I:=f|_{\Omega_I}$ can be written as
	\begin{equation*}
	f_I=F_1+F_2J+F_3K+F_4(JK).
	\end{equation*}
	Then
	\begin{equation*}
	\overline{\partial_I} f_I=\overline{\partial_I} F_1+\overline{\partial_I} F_2J+\overline{\partial_I} F_3K+\overline{\partial_I} F_4(JK)=0.
	\end{equation*}
Hence $f_I$ is holomorphic for all $I\in\mathbb{S}$. It implies that $f$ is weak slice regular.
\end{proof}

\section{Identity principle}\label{sc-ip}
In this section we prove an identity principle for both the weak and strong slice regular functions. Moreover, we generalize some properties of the slice-topology
of $\mathbb{H}$ (see \cite{Dou2020001}) to $\mathbb{O}_s^n$.

\begin{defn}
	A subset $\Omega$ of $\mathbb{O}^n_s$ is called real-connected, if $\Omega_{\mathbb{R}}:=\Omega\cap\mathbb{R}^n$ is connected in $\mathbb{R}^n$.
\end{defn}

We note that the subspace topology on $\mathbb{R}^n$ induced by $\tau_s$ coincides with the Euclidean topology on $\mathbb{R}^n$, and so the notions of slice-connectedness and of connectedness coincide on $\mathbb{R}^n$.

\begin{prop}\label{pr-rs}
	Let $\Omega$ be a slice-open set in $\mathbb{O}^n_s$. Then for any $q\in\Omega$, there is a real-connected slice-domain $V\subset\Omega$ containing $q$.
\end{prop}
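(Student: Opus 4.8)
The plan is to treat separately the cases $q\notin\mathbb{R}^n$ and $q\in\mathbb{R}^n$, the first being easy and the second carrying all the content. If $q=x_0+y_0I_0$ with $y_0\neq 0$, then $q\in\mathbb{C}_{I_0}^n\setminus\mathbb{R}^n$, and since $\Omega_{I_0}$ is Euclidean-open in $\mathbb{C}_{I_0}^n$ while $\mathbb{R}^n$ is closed, there is a small Euclidean ball $V=B_{I_0}(q,r)\subset\Omega_{I_0}$ with $V\cap\mathbb{R}^n=\emptyset$. Such a $V$ is slice-open: for $J\neq\pm I_0$ one has $V_J\subset\mathbb{C}_{I_0}^n\cap\mathbb{C}_J^n=\mathbb{R}^n$, so $V_J\subset V\cap\mathbb{R}^n=\emptyset$, while $V_{\pm I_0}=V$ is open; moreover $V\cap\mathbb{R}^n=\emptyset$ is (vacuously) connected. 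I will first record the elementary fact that a Euclidean-connected subset $S$ of a single slice $\mathbb{C}_I^n$ is slice-connected: any relatively slice-open partition of $S$ is, by restricting the ambient slice-open sets to $\mathbb{C}_I^n$, a relatively Euclidean-open partition, hence trivial. Applied to the ball, this shows $V$ is a real-connected slice-domain containing $q$, settling this case.

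So assume $q\in\mathbb{R}^n$. Since each $\Omega_I$ is Euclidean-open in $\mathbb{C}_I^n$ and $\mathbb{R}^n\subset\mathbb{C}_I^n$, the trace $\Omega_{\mathbb{R}}=\Omega\cap\mathbb{R}^n$ is open in $\mathbb{R}^n$; let $W_0$ be its connected component containing $q$, an open connected subset of $\mathbb{R}^n$. The idea is to thicken $W_0$ into every slice by a tube that touches the real axis exactly along $W_0$. For each $I\in\mathbb{S}$ define the positive continuous function $\delta_I(x)=\tfrac12\,\mathrm{dist}_{\mathbb{C}_I^n}\big((x,0),\,\mathbb{C}_I^n\setminus\Omega_I\big)$ on $W_0$ (with the convention $\delta_I\equiv 1$ when $\Omega_I=\mathbb{C}_I^n$), and set
\[
U_I:=\{\,x+yI:\ x\in W_0,\ y\in\mathbb{R}^n,\ |y|<\delta_I(x)\,\},\qquad V:=\bigcup_{I\in\mathbb{S}}U_I.
\]
Each $U_I$ is open in $\mathbb{C}_I^n$ (the region under the graph of a continuous positive function over an open base), contained in $\Omega_I$ (because $|(x,y)-(x,0)|=|y|<\mathrm{dist}((x,0),\mathbb{C}_I^n\setminus\Omega_I)$), and connected, since the straight-line homotopy $(x,y,t)\mapsto x+(1-t)yI$ retracts $U_I$ onto $W_0$.

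It then remains to verify that $V$ has the required properties. Clearly $q\in W_0\subset V\subset\Omega$. The decisive point is that $U_I\cap\mathbb{R}^n=W_0$ for every $I$, which holds by construction since $y=0$ forces $x\in W_0$. Consequently $V\cap\mathbb{R}^n=W_0$ is connected, and, computing $V_J=\bigcup_I(U_I\cap\mathbb{C}_J^n)$ slice by slice, the terms with $I\neq\pm J$ all equal $U_I\cap\mathbb{R}^n=W_0\subset U_J$, whereas the term $I=\pm J$ equals $U_J$; hence $V_J=U_J$ is open and $V$ is slice-open. Finally each $U_I$ is slice-connected by the fact recorded above, and all the $U_I$ share the point $q$, so their union $V$ is slice-connected; thus $V$ is a real-connected slice-domain with $q\in V\subset\Omega$. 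I expect the main obstacle to be exactly this control of the real trace: because $\Omega$ is only slice-open, the radii $\delta_I$ cannot be chosen uniformly in $I$, so a naive union of balls around $q$ fails to be slice-open; arranging $U_I\cap\mathbb{R}^n=W_0$ is what makes the slicewise identity $V_J=U_J$ hold and is the heart of the argument.
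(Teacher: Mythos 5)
Your proof is correct, and it reaches the conclusion by a genuinely different, more constructive route than the paper. The paper's proof is a one-liner: setting $A$ equal to the connected component of $\Omega_{\mathbb{R}}$ containing $q$ when $q\in\mathbb{R}^n$ (and $A=\varnothing$ otherwise), it declares $V$ to be the slice-connected component of $(\Omega\setminus\Omega_{\mathbb{R}})\cup A$ containing $q$. This is shorter, but it leaves two verifications to the reader: that $(\Omega\setminus\Omega_{\mathbb{R}})\cup A$ is slice-open, and that its slice-connected components are slice-open --- the latter amounting to local slice-connectedness of $\tau_s$, which the paper establishes only later (Proposition \ref{pr-lp}), partly by means of the present proposition. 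Your tube construction needs neither fact: you build the neighborhood by hand as $V=\bigcup_{I\in\mathbb{S}}U_I$, where each $U_I$ is an open tube in $\mathbb{C}_I^n$ over the real component $W_0$, with radius $\delta_I$ allowed to depend on $I$ and normalized so that $U_I\cap\mathbb{R}^n=W_0$ for every $I$. That normalization is indeed the crux, as you say: it forces $V_J=U_J$ for every $J$ (since $U_I\cap\mathbb{C}_J^n=U_I\cap\mathbb{R}^n=W_0\subset U_J$ when $I\neq\pm J$, and $U_{-J}=U_J$ by the symmetry $y\mapsto -y$ together with $\Omega_{-J}=\Omega_J$), so slice-openness, slice-connectedness (via your correct preliminary remark that a Euclidean-connected subset of a single slice is slice-connected, because $\tau_s$ and $\tau$ induce the same topology on $\mathbb{C}_I^n$) and real-connectedness of $V$ all reduce to elementary Euclidean statements in each slice. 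Your case $q\notin\mathbb{R}^n$ (a small ball in one slice, disjoint from $\mathbb{R}^n$) matches the paper's case $A=\varnothing$. In short, the paper's argument buys brevity and a canonical, maximal choice of $V$; yours buys explicitness and self-containedness, avoiding any appeal to local connectedness of the slice topology.
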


\begin{proof}
	Let us consider $q\in\Omega$ and a set $A$ defined as follows: if $q\in\mathbb{R}^n$ then $A$ is the connected component of $\Omega_{\mathbb{R}}$
containing $q$ in $\mathbb{R}^n$, otherwise $A=\varnothing$. Then the slice-connected component $(\Omega\backslash\Omega_{\mathbb{R}})\cup A$ containing $q$ is a real-connected slice-domain in $\Omega$ containing $q$.
\end{proof}

\begin{defn}
	A path $\gamma$ in $\mathbb{O}^n_s$ is called on a slice, if $\gamma\subset\mathbb{O}^n_s\cap\mathbb{C}_I^n$ for some $I\in\mathbb{S}$.
\end{defn}

\begin{prop}
Any path on a slice is a slice-path.
\end{prop}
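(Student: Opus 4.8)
The plan is to unwind the two topologies involved and to verify continuity directly from the definition of $\tau_s$. By a \emph{path on a slice} we mean a map $\gamma\colon[0,1]\to\mathbb{O}^n_s$ that is continuous for the Euclidean topology $\tau$ and whose image lies in a single slice $\mathbb{C}^n_I$, $I\in\mathbb{S}$; by a \emph{slice-path} we mean a map that is continuous for the slice-topology $\tau_s$. So the goal is to show that, for every $U\in\tau_s$, the preimage $\gamma^{-1}(U)$ is open in $[0,1]$.

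The key reduction is that the image of $\gamma$ is confined to $\mathbb{C}^n_I$. Hence for any $U\subset\mathbb{O}^n_s$ one has
\begin{equation*}
\gamma^{-1}(U)=\gamma^{-1}(U\cap\mathbb{C}^n_I)=\gamma^{-1}(U_I),
\end{equation*}
and when $U\in\tau_s$ the very definition of the slice-topology gives $U_I\in\tau(\mathbb{C}^n_I)$. Thus it suffices to prove that $\gamma^{-1}$ sends Euclidean-open subsets of the slice $\mathbb{C}^n_I$ to open subsets of $[0,1]$.

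This last point is where the (only) care is needed: one must identify $\tau(\mathbb{C}^n_I)$ with the subspace topology that $\tau$ induces on $\mathbb{C}^n_I$. Since $\tau$ is itself the subspace topology induced on $\mathbb{O}^n_s$ by the Euclidean topology of $\mathbb{O}^n$, and $\mathbb{C}^n_I\subset\mathbb{O}^n_s\subset\mathbb{O}^n$, transitivity of the subspace topology shows that the topology induced on $\mathbb{C}^n_I$ by $\tau$ is exactly $\tau(\mathbb{C}^n_I)$. Consequently $U_I=W\cap\mathbb{C}^n_I$ for some $\tau$-open $W\subset\mathbb{O}^n_s$, and because $\gamma([0,1])\subset\mathbb{C}^n_I$ we obtain $\gamma^{-1}(U_I)=\gamma^{-1}(W)$, which is open in $[0,1]$ by the $\tau$-continuity of $\gamma$. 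Therefore $\gamma^{-1}(U)$ is open for every $U\in\tau_s$, that is, $\gamma$ is a slice-path.

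I do not expect any genuine obstacle here: the statement is essentially the observation that $\tau_s$ and $\tau$ restrict to the same topology on each individual slice $\mathbb{C}^n_I$, so the continuity of a path confined to one slice is insensitive to which of the two ambient topologies one uses. The only thing to watch is the bookkeeping with subspace topologies in the preceding paragraph, ensuring that ``Euclidean-open in $\mathbb{C}^n_I$'' and ``$\tau_s$-open intersected with $\mathbb{C}^n_I$'' genuinely agree.
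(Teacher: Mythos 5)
Your proof is correct and takes essentially the same route as the paper, whose one-line argument cites the fact that $\tau_s$ and $\tau$ induce the same topology on each slice $\mathbb{C}_I^n$; you simply spell out that comparison in detail, and in fact you only use the easier inclusion (that $U\in\tau_s$ implies $U_I\in\tau(\mathbb{C}_I^n)$, which is the very definition of $\tau_s$) together with transitivity of subspace topologies. Nothing is missing.
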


\begin{proof}
	It follows directly from the fact that $\tau_s(\mathbb{C}_I^n)=\tau(\mathbb{C}_I^n)$ for any $I\in\mathbb{S}$.
\end{proof}

\begin{prop}\label{pr-psa}
	Let $\Omega$ be a real-connected slice-domain in $\mathbb{O}^n_s$. Then
	
	\begin{enumerate}[label=(\roman*)]
		
		\item If $\Omega_\mathbb{R}=\varnothing$, then $\Omega\subset\mathbb{C}_{I}^n$ for some $I\in\mathbb{S}$.
		
		\item If $\Omega_\mathbb{R}\neq\varnothing$, then for each $q\in\Omega$
and $x\in\Omega_\mathbb{R}$, there is a slice-path $\gamma$ on a slice from $q$ to $x$.
		
		\item For all $I\in\mathbb{S}$, $\Omega_I$ is a domain in $\mathbb{C}_I^n$.
		
		\item For all $p,q\in\Omega$, there exist two slice paths $\gamma_1,\gamma_2$, each of which on a slice in $\Omega$, such that $\gamma_1\gamma_2$ is a path from $p$ to $q$.
	\end{enumerate}
\end{prop}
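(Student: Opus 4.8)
The plan is to reduce all four statements to a single \emph{clopen principle}: any connected component $C$ of a slice $\Omega_I$ that does not meet $\mathbb{R}^n$ is simultaneously slice-open and slice-closed in $\Omega$. To prove this I would exploit the identity $\mathbb{C}_I^n\cap\mathbb{C}_J^n=\mathbb{R}^n$ for $J\neq\pm I$ from the preliminaries. For slice-openness one checks the defining condition of $\tau_s$ slice by slice: $C_{\pm I}=C$ is Euclidean-open in $\mathbb{C}_I^n$, while for $J\neq\pm I$ one has $C_J=C\cap\mathbb{C}_J^n\subset C\cap\mathbb{R}^n=\varnothing$, which is open. The same computation applied to $\Omega\setminus C=(\Omega_I\setminus C)\cup\bigcup_{J\neq\pm I}\Omega_J$ shows this complement is slice-open as well (each $\Omega_J$ is slice-open by the identical slice-by-slice check, and $\Omega_I\setminus C$ is Euclidean-open, being a union of the remaining components), so $C$ is also slice-closed. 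Since $\Omega$ is slice-connected, such a $C$ must be empty or all of $\Omega$.

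For part (i), assume $\Omega_\mathbb{R}=\varnothing$. Then no slice meets $\mathbb{R}^n$, so the whole slices $\Omega_I$ play the role of $C$ above and are slice-clopen in $\Omega$; since $\Omega=\bigcup_{I}\Omega_I$ is nonempty and slice-connected, exactly one $\Omega_I$ is nonempty, giving $\Omega\subset\mathbb{C}_I^n$. For part (iii) I would first dispatch the case $\Omega_\mathbb{R}=\varnothing$ using (i): then $\Omega=\Omega_{I_0}$ is slice-connected, hence Euclidean-connected on its slice, while $\Omega_J\subset\mathbb{C}_{I_0}^n\cap\mathbb{C}_J^n=\mathbb{R}^n$ is empty for every other $J$ (so it is connected, being empty or a single domain). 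If instead $\Omega_\mathbb{R}\neq\varnothing$, the clopen principle forces every connected component of every $\Omega_I$ to meet $\mathbb{R}^n$, hence to meet $\Omega_\mathbb{R}=\Omega\cap\mathbb{R}^n\subset\Omega_I$; since $\Omega_\mathbb{R}$ is connected by real-connectedness it lies in a single component, and as every component meets it, there can be only one component, i.e. $\Omega_I$ is a domain.

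Parts (ii) and (iv) then follow by passing from connectedness to paths. For (ii), with $\Omega_\mathbb{R}\neq\varnothing$, pick a slice $\mathbb{C}_I^n$ containing $q$; by (iii) the set $\Omega_I$ is a connected open subset of $\mathbb{C}_I^n\cong\mathbb{R}^{2n}$, hence path-connected, so a Euclidean path in $\Omega_I$ joins $q$ to $x\in\Omega_\mathbb{R}\subset\Omega_I$, and this path, lying on the single slice $\mathbb{C}_I^n$, is a slice-path by the proposition that any path on a slice is a slice-path. For (iv), if $\Omega_\mathbb{R}\neq\varnothing$ I would fix $x\in\Omega_\mathbb{R}$ and apply (ii) to obtain paths-on-slices $p\rightsquigarrow x$ and $x\rightsquigarrow q$ (the latter by reversal) whose concatenation joins $p$ to $q$; if $\Omega_\mathbb{R}=\varnothing$ then by (i) both points already lie on one slice, and a single on-slice polygonal path suffices (taking the second path constant).

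The main obstacle is the careful bookkeeping inside the slice-topology: the clopen principle holds \emph{only} because distinct slices overlap exactly in $\mathbb{R}^n$, so each verification that a set belongs to $\tau_s$ must be carried out slice by slice and must invoke the hypothesis $C\cap\mathbb{R}^n=\varnothing$ (respectively $\Omega_\mathbb{R}=\varnothing$) to annihilate the off-slice intersections. Everything else—extracting polygonal paths from connected open subsets of $\mathbb{R}^{2n}$ and concatenating them—is routine once the connectedness of each $\Omega_I$ has been established.
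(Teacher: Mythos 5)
Your overall architecture is sound and is essentially the paper's own: the ``clopen principle'' you isolate is exactly the argument the paper runs inline in its proof of (ii) (there, for the connected component $V$ of $\Omega_I$ containing $q$: if $V_\mathbb{R}=\varnothing$, then $V$ and $\Omega\setminus V$ are slice-open, contradicting slice-connectedness), and your parts (i), (ii), (iv) match the paper's, with (ii) and (iii) derived in the opposite order (the paper gets (ii) directly from the clopen argument and deduces (iii); you get (iii) first and deduce (ii)).

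There is, however, one flawed justification inside your clopen principle that needs repair. The parenthetical claim that ``each $\Omega_J$ is slice-open'' is false precisely in the case where you apply the principle: if $\Omega_\mathbb{R}\neq\varnothing$, then for $K\neq\pm J$ one has $(\Omega_J)_K=\Omega\cap\mathbb{C}_J^n\cap\mathbb{C}_K^n=\Omega_\mathbb{R}$, a nonempty subset of $\mathbb{R}^n$, which has empty interior in $\mathbb{C}_K^n\cong\mathbb{R}^{2n}$; hence $\Omega_J\notin\tau_s$. The same objection applies to $\Omega_I\setminus C$, which contains $\Omega_\mathbb{R}$ and is likewise not slice-open. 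So you cannot conclude that $\Omega\setminus C$ is slice-open by writing it as a union of slice-open pieces. The repair is immediate, and is what your phrase ``the same computation'' should mean: verify the defining condition of $\tau_s$ directly on the set $\Omega\setminus C$. Its $K$-slice is $\Omega_K$ for $K\neq\pm I$ (Euclidean-open in $\mathbb{C}_K^n$ because $\Omega\in\tau_s$, and here one uses $C\cap\mathbb{C}_K^n\subset C\cap\mathbb{R}^n=\varnothing$), and it is $\Omega_I\setminus C$ for $K=\pm I$ (Euclidean-open, being the union of the remaining components); the failure of the individual pieces to be slice-open is invisible at the level of the slices of the union. With that correction, the rest of your argument --- every component of $\Omega_I$ meets the connected set $\Omega_\mathbb{R}$, hence $\Omega_I$ has a single component, then path-connectedness of open connected subsets of $\mathbb{C}_I^n$ and concatenation of on-slice paths --- goes through and reproduces the paper's proof.
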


\begin{proof}
	(i)	Let us assume that $\Omega_\mathbb{R}=\varnothing$. Then for any $I\in\mathbb{S}$, $\mathbb{C}_I^n\backslash\mathbb{R}^n$ is slice-open in $\mathbb{O}^n_s$ and
		\begin{equation*}
			\Omega\subset\bigsqcup_{I\in\mathbb{S}^+}(\mathbb{C}_{I}^n\backslash\mathbb{R}^n),
		\end{equation*}
		where $\mathbb{S}^+$ is a subset of $\mathbb{S}$ such that $\mathbb{S}=\mathbb{S}^+\bigsqcup(-\mathbb{S}^+)$.
		By the slice-connectedness of $\Omega$, $\Omega\subset\mathbb{C}_{I}^n\backslash\mathbb{R}^n$ for some $I\in\mathbb{S}$.
	
	(ii) For any $q\in\Omega$  there is $I\in\mathbb{S}$ such that $q\in\mathbb{C}_I^n$. Set $V$ the connected component of $\Omega_I$ containing $q$. If $V_\mathbb{R}=\varnothing$, then $V$ and $\Omega\backslash V$ are slice-open, and since $\Omega$ is slice-connected, it follows that $V=\Omega$ and $\Omega_\mathbb{R}=\varnothing$,
	which is a contradiction. Therefore, $V_\mathbb{R}\neq\varnothing$. Let now $x\in\Omega_{\mathbb R}$. Then for any $x_0\in V_\mathbb{R}$, there is a path $\alpha$ in $V$ from $q$ to $x_0$, and a path $\beta$ in $\Omega_\mathbb{R}$ from $x_0$ to $x$. It follows that the path composition $\alpha\beta$ is a slice-path on a slice from $q$ to $x$.
	
	(iii), (iv) follow directly from (i) and (ii).
\end{proof}

\begin{prop}\label{pr-lp}
	The topological space $(\mathbb{O}_s^n,\tau_s)$ is connected, locally path-connected and path-connected.
\end{prop}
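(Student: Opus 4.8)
The plan is to establish path-connectedness directly (which immediately yields connectedness) and then to derive local path-connectedness from the local-structure results already proved. Throughout I would rely on two facts recorded earlier in the excerpt: that the subspace topology induced by $\tau_s$ on each slice $\mathbb{C}_I^n$ coincides with the Euclidean one, so that any Euclidean-continuous path lying in a single slice is automatically a slice-path (this is exactly the proposition asserting that a path on a slice is a slice-path), together with the pasting lemma, which lets me concatenate slice-paths into a slice-path.

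For path-connectedness, I would take arbitrary $p,q\in\mathbb{O}_s^n$, choose $I,J\in\mathbb{S}$ with $p\in\mathbb{C}_I^n$ and $q\in\mathbb{C}_J^n$, and exploit that $\mathbb{R}^n\subset\mathbb{C}_I^n\cap\mathbb{C}_J^n$. Since $\mathbb{C}_I^n$ is homeomorphic to $\mathbb{R}^{2n}$ in the Euclidean topology it is path-connected, so there is a Euclidean path in $\mathbb{C}_I^n$ from $p$ to some real point $a\in\mathbb{R}^n$; likewise a Euclidean path in $\mathbb{C}_J^n$ from $q$ to a real point $b\in\mathbb{R}^n$; and a path in $\mathbb{R}^n$ from $a$ to $b$. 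Each of these three paths lies on a single slice ($\mathbb{C}_I^n$, $\mathbb{C}_J^n$, and $\mathbb{R}^n\subset\mathbb{C}_I^n$ respectively), hence each is a slice-path; concatenating them produces a slice-path from $p$ to $q$. Thus $(\mathbb{O}_s^n,\tau_s)$ is path-connected, and in particular connected.

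For local path-connectedness, I would show that the real-connected slice-domains furnish a basis of path-connected slice-open neighborhoods. Concretely, given any slice-open set $U$ and any $q\in U$, Proposition \ref{pr-rs} produces a real-connected slice-domain $V$ with $q\in V\subset U$, and Proposition \ref{pr-psa}(iv) guarantees that any two points of such a $V$ are joined by a concatenation $\gamma_1\gamma_2$ of two paths, each on a slice in $V$, hence by a slice-path contained in $V$; so $V$ is path-connected. Since every point of every slice-open set admits such a path-connected slice-open neighborhood, the space is locally path-connected.

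The only genuinely delicate point is that $\tau_s$ is strictly finer than the Euclidean subspace topology on $\mathbb{O}_s^n$, so one cannot simply take straight-line Euclidean paths and declare them $\tau_s$-continuous. The reason the argument goes through is precisely that every path I use stays inside one slice, where $\tau_s$ and the Euclidean topology agree; keeping each path \emph{on a slice} is therefore the crux, and since this is already supplied by the earlier results no further obstacle remains.
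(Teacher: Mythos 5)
Your proof is correct and follows essentially the same route as the paper: the paper likewise deduces local path-connectedness from Propositions \ref{pr-rs} and \ref{pr-psa}(iv), and obtains path-connectedness (hence connectedness) from the fact that every slice $\mathbb{C}_I^n$ contains $\mathbb{R}^n$, so points can be joined by concatenating paths each lying on a single slice. Your write-up merely makes explicit the three-path concatenation through $\mathbb{R}^n$ that the paper leaves as ``clear.''
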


\begin{proof}
	Propositions \ref{pr-rs} and \ref{pr-psa} (iv) imply that $(\mathbb{O}_s^n,\tau_s)$ is locally path-connected. Since
	\begin{equation*}
	\mathbb{O}^n_s\cap\mathbb{C}_I^n=\mathbb{C}_I^n\supset\mathbb{R}^n,\qquad\forall\ I\in\mathbb{S},
	\end{equation*}
	it is clear that $(\mathbb{O}_s^n,\tau_s)$ is path-connected and connected.
\end{proof}

\begin{lem}\label{th-dps}
	Let $\Omega$ be a real-connected slice-domain in $\mathbb{O}_s^n$ and $f,g$ be two weak slice regular functions on $\Omega$. If $f$ and $g$ coincide on a nonempty open subset of $\Omega_I$ (or $\Omega_{\mathbb{R}}$) for some $I\in\mathbb{S}$, then $f=g$ on $\Omega$.
\end{lem}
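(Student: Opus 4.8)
The plan is to set $h := f - g$ and to prove $h \equiv 0$ on $\Omega$. First I would check that $h$ is again weak slice regular: for every $J\in\mathbb{S}$ the operator $\overline{\partial_J}$ is $\mathbb{R}$-linear, so $\overline{\partial_J} h_J = \overline{\partial_J} f_J - \overline{\partial_J} g_J = 0$ and $h_J$ has continuous partial derivatives, whence each $h_J$ is holomorphic on $\Omega_J$. By the Splitting Lemma (Lemma \ref{lm-sp}), fixing any s-basis $\{J,A,B\}$ I may write $h_J = H_1 + H_2 A + H_3 B + H_4(AB)$ with $H_1,\dots,H_4\colon \Omega_J \to \mathbb{C}_J$ complex-valued and holomorphic; since $1,A,B,AB$ belong to a basis of $\mathbb{O}$, the vanishing of $h_J$ is equivalent to the simultaneous vanishing of the $H_k$. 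This reduces every assertion about $h_J$ to the classical identity theorems for holomorphic functions of several complex variables on the domain $\Omega_J\subseteq\mathbb{C}_J^n$, which is connected by Proposition \ref{pr-psa}(iii). Note that we cannot invoke the representation formula (Corollary \ref{co-rf}) here, precisely because weak slice regular functions need not be slice functions.

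Next I would split according to whether $\Omega_{\mathbb{R}}$ is empty. If $\Omega_{\mathbb{R}}=\varnothing$, then by Proposition \ref{pr-psa}(i) we have $\Omega\subseteq\mathbb{C}_{I_0}^n$ for a single $I_0\in\mathbb{S}$, and for every $I\neq\pm I_0$ the slice $\Omega_I\subseteq\mathbb{C}_I^n\cap\mathbb{C}_{I_0}^n=\mathbb{R}^n$ is empty; hence the hypothesis must be realized on $\Omega_{I_0}=\Omega$, and the ordinary several-variable identity theorem on the domain $\Omega$ gives $h\equiv 0$ (the alternative ``on $\Omega_{\mathbb{R}}$'' being vacuous in this case).

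The substantial case is $\Omega_{\mathbb{R}}\neq\varnothing$, where the whole point is to propagate vanishing from one slice to all the others through the real axis $\mathbb{R}^n$. If the coincidence set is a nonempty open subset $W$ of $\Omega_I$, I would first apply the several-variable identity theorem on the domain $\Omega_I$ to obtain $h_I\equiv 0$; in particular $h$ vanishes on the nonempty set $\Omega_{\mathbb{R}}=\Omega_I\cap\mathbb{R}^n$, which is open in $\mathbb{R}^n$ because $\tau_s$ induces the Euclidean topology there. This reduces to the same situation as when the coincidence set is a nonempty open subset $W\subseteq\Omega_{\mathbb{R}}$ from the start: now $W$ is a nonempty open subset of $\mathbb{R}^n$ on which $h$ vanishes, and for every $J\in\mathbb{S}$ we have $W\subseteq\mathbb{R}^n\subseteq\mathbb{C}_J^n$, so $W\subseteq\Omega_J$.

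The key step, and the main obstacle, is then a real-uniqueness statement: a holomorphic function on the domain $\Omega_J\subseteq\mathbb{C}_J^n$ that vanishes on a nonempty relatively open subset of $\Omega_J\cap\mathbb{R}^n$ must vanish identically. Applied to each complex component $H_k$ of $h_J$, this follows by expanding $H_k$ in a convergent power series about a point $x_0\in W$: since $H_k$ vanishes on a full real neighborhood of $x_0$, all of its real Taylor coefficients — which coincide with the complex power-series coefficients — vanish, so $H_k\equiv 0$ near $x_0$, and connectedness of $\Omega_J$ finishes the argument. Hence $h_J\equiv 0$ for every $J$, i.e. $h\equiv 0$ and $f=g$ on $\Omega$. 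The delicate point is exactly that $W$ is only totally real, not open, in $\mathbb{C}_J^n$, so the ordinary identity theorem does not apply and one must exploit that $\mathbb{R}^n$ is a uniqueness set for holomorphy on each slice; this, combined with the connectedness of every $\Omega_J$ guaranteed by real-connectedness through Proposition \ref{pr-psa}(iii), is what ties the slices together in the absence of a representation formula.
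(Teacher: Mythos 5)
Your proof is correct and takes essentially the same route as the paper's: both reduce everything to the classical several-complex-variables identity theorem via the Splitting Lemma \ref{lm-sp}, propagate vanishing from one slice to all others through a point of $\Omega_{\mathbb{R}}$ by matching Taylor coefficients there (holomorphic derivatives at real points reduce to real partial derivatives), and invoke Proposition \ref{pr-psa} both for the connectedness of each $\Omega_I$ and for the case $\Omega_{\mathbb{R}}=\varnothing$. The only differences are organizational: you work with $h=f-g$ and split cases by whether $\Omega_{\mathbb{R}}$ is empty, whereas the paper splits by whether the coincidence set lies in $\Omega_{\mathbb{R}}$ or in some $\Omega_I$.
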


\begin{proof}
We begin by observing that Proposition \ref{pr-psa} (iii) and the fact that  $\Omega_I\neq\varnothing$ imply that $\Omega_I$ is a nonempty domain in $\mathbb{C}_I^n$.
	
	(i) Suppose $f$ and $g$ coincide on an open subset $V$ of $\Omega_{\mathbb{R}}$. Then for any $x\in V$, $f$ and $g$ have the same Taylor series at $x \in \Omega_{\mathbb{R}}$. According to Splitting Lemma \ref{lm-sp}, $f$ and $g$ have the same Taylor series at $x$ in $\Omega_I$, for any fixed $I\in\mathbb{S}$ since this is true for each complex valued components
of $f$ and $g$. Let $r\in\mathbb{R}_+$ such that $B_I(x,r)\subset\Omega_I$, where $B_I(x,r)$ is the ball with center $x$ and radius $r$ in $\mathbb{C}_I^n$. Then
	\begin{equation*}
		\begin{split}
		f^{(I,\alpha)}(x)=&\left[\frac{1}{2}\left(\frac{\partial}{\partial x_1}-I\frac{\partial}{\partial y_1}\right)\right]^{\alpha_1}\cdots\left[\frac{1}{2}\left(\frac{\partial}{\partial x_n}-I\frac{\partial}{\partial y_n}\right)\right]^{\alpha_n}f(x)
		\\=&\left(\frac{\partial}{\partial x_1}\right)^{\alpha_1}\cdots\left(\frac{\partial}{\partial x_n}\right)^{\alpha_n}f(x)=\left(\frac{\partial}{\partial x_1}\right)^{\alpha_1}\cdots\left(\frac{\partial}{\partial x_n}\right)^{\alpha_n}g(x)=g^{(I,\alpha)}(x),
		\end{split}
\end{equation*}
where $f^{(I,\alpha)}:=(f|_{\Omega_I})^{(I,\alpha)}$ (the meaning of the symbols $(f|_{\Omega_I})^{(I,\alpha)}$ is clear, but see Definition \ref{df-lis}). Hence for any $z\in B_I(x,r)$,
\begin{equation*}
	f(z)=\sum_{\alpha\in\mathbb{N}^n}\frac{1}{\alpha !}(z-x)^\alpha f^{(I,\alpha)}(x)=\sum_{\alpha\in\mathbb{N}^n}\frac{1}{\alpha !}(z-x)^\alpha g^{(I,\alpha)}(x)=g(z).
\end{equation*}
Therefore $f=g$ on an open set $B_I(x,r)$ in $\Omega_I$.

Therefore $f$ and $g$ coincide in $\Omega_I$. Consequently, $f=g$ on $\Omega=\bigcup_{J\in\mathbb{S}}\Omega_J$.

	(ii) Suppose there is $I\in\mathbb{S}$ such that $f$ and $g$ coincide on
an open subset of $\Omega_I$. Then $f$ and $g$ coincide on $\Omega_I$. If
$\Omega_{\mathbb{R}}=\varnothing$, then $\Omega=\Omega_I$ by Proposition \ref{pr-psa} (i), hence this lemma holds in this case. Otherwise $\Omega_{\mathbb{R}}\neq\varnothing$, then the lemma holds by (i).
\end{proof}

\begin{thm}\label{th-idh}
	(Identity Principle) Let $\Omega$ be a slice-domain  in $\mathbb{O}_s^n$
and $f,g$ be weak or strong slice regular functions on $\Omega$. If $f$ and $g$ coincide on a nonempty open subset $V$ of $\Omega_I$ (or $\Omega_{\mathbb{R}}$) for some $I\in\mathbb{S}$ then $f=g$ in $\Omega$.
\end{thm}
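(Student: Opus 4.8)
The plan is to upgrade the local identity principle of Lemma \ref{th-dps}, which is stated only for real-connected slice-domains, to an arbitrary slice-domain $\Omega$ by a connectedness argument in the slice-topology. First I would reduce to a single case: since any strong slice regular function on a slice-open set is weak slice regular by Remark \ref{rm-sw} and $\Omega$ is in particular slice-open, both $f$ and $g$ may be assumed weak slice regular, so the mixed and the purely strong cases follow from the weak one. I would then introduce
$$A:=\{q\in\Omega : f=g \text{ on some } \tau_s\text{-open set } N \text{ with } q\in N\subset\Omega\},$$
which is $\tau_s$-open by construction, and show $A=\Omega$ by exploiting that $\Omega$ is slice-connected. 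This requires checking that $A$ is nonempty and $\tau_s$-closed in $\Omega$.

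For nonemptiness I would pick $q\in V$ and, via Proposition \ref{pr-rs}, choose a real-connected slice-domain $W\subset\Omega$ containing $q$; note $W$ is slice-open and the restrictions $f|_W,g|_W$ remain weak slice regular. Since $V$ is open in $\Omega_I$ (resp. in $\Omega_{\mathbb{R}}$), the intersection $V\cap W_I$ (resp. $V\cap W_{\mathbb{R}}$) is a nonempty open subset of $W_I$ (resp. $W_{\mathbb{R}}$) on which $f=g$. Applying Lemma \ref{th-dps} to $W$ yields $f=g$ on all of $W$, hence $W\subset A$ and $A\neq\varnothing$.

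For closedness, suppose $q$ lies in the $\tau_s$-closure of $A$ in $\Omega$, and again take a real-connected slice-domain $W\subset\Omega$ with $q\in W$ from Proposition \ref{pr-rs}. As $W$ is a $\tau_s$-neighborhood of $q$, it meets $A$ at some point $p$, so there is a slice-open set $N\ni p$ with $f=g$ on $N$. Picking $J\in\mathbb{S}$ with $p\in\mathbb{C}_J^n$, the set $(N\cap W)_J$ is a nonempty open subset of $W_J$ on which $f=g$, and Lemma \ref{th-dps} applied to $W$ once more forces $f=g$ on all of $W$; thus $W\subset A$ and in particular $q\in A$. Therefore $A$ is simultaneously nonempty, $\tau_s$-open, and $\tau_s$-closed in $\Omega$, and slice-connectedness of $\Omega$ gives $A=\Omega$, i.e. $f=g$ on $\Omega$.

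The delicate point, and the reason the theorem is not an immediate restatement of Lemma \ref{th-dps}, is that the hypothesis furnishes agreement only on an open subset of a \emph{single} slice $\Omega_I$, which is not a $\tau_s$-open set and hence cannot directly seed the open-closed argument. The combined use of Proposition \ref{pr-rs} and Lemma \ref{th-dps} serves precisely to ``thicken'' this one-slice agreement into agreement on a genuinely slice-open real-connected slice-domain. All the analytic substance—propagating equality across distinct imaginary units by means of the representation formula—is already encapsulated in Lemma \ref{th-dps}, so the work here is essentially the topological step of globalizing the local statement across the slice-connected set $\Omega$.
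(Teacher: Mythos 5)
Your proposal is correct and takes essentially the same route as the paper: reduce to the weak case via Remark \ref{rm-sw}, define the same set $A$ of points of local agreement, seed it and propagate it using Proposition \ref{pr-rs} together with Lemma \ref{th-dps}, and conclude by slice-connectedness of $\Omega$. Your closedness step is just the contrapositive of the paper's argument that $\Omega\setminus A$ is slice-open, and your explicit handling of the intersections $V\cap W_I$ and $(N\cap W)_J$ is, if anything, more careful than the paper's own wording.
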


\begin{proof}
	(i) Suppose $f,g$ are weak slice regular. Set
	\begin{equation*}
		A:=\{x\in\Omega:\exists\ U\in\tau_s(\Omega),\ \mbox{s.t.}\ x\in U\ \mbox{and}\ f=g\ \mbox{on}\ U\}.
	\end{equation*}
	
	Obviously, $A$ is slice-open in $\Omega$ by definition.
	
	Choose $q\in V$. According to Proposition \ref{pr-rs}, there is a real-connected slice-domain $U$ containing $q$ with $V\subset\Omega$. By Lemma \ref{th-dps}, $f=g$ on $V$. Hence $q\in A$ and $A$ is nonempty.
	
	Choose $q\in\Omega\backslash A$. Set $U$ be a real-connected slice-domain in $\mathbb{O}_s^n$ containing $q$ with $U\subset\Omega$. It is clear that $A\cap U$ is slice-open in $\Omega$. If $A\cap U\neq\varnothing$, then $f=g$ on $A\cap U$. By Lemma \ref{th-dps}, $f=g$ on $V$ and $q\in A$, which is a contradiction. Therefore $A\cap V=\varnothing$ and $q$ is
a slice-interior of $\Omega\backslash A$. It implies that $\Omega\backslash A$ is slice-open, and since $\Omega$ is slice-connected, $A=\Omega$.
Hence $f=g$ on $\Omega$.
	
	(ii) Suppose $f,g$ are strong slice regular. By Remark \ref{rm-sw}, $f,g$ are weak slice regular and so the assertion holds by (i).
\end{proof}

\section{Representation formulas for weak slice regular functions}\label{sc-rf}

In this section, we prove that weak slice regular functions defined on axially symmetric slice-domains are slice functions. Hence they satisfy the
representation formulas in Corollaries \ref{co-grf} and \ref{co-rf}. Moreover, we show that a function defined on an axially symmetric slice-domain is weak slice regular if and only if it is strong slice regular.

The following result is very simple but crucial in the sequel:
\begin{prop}\label{pr-vn}
	Let $\Omega$ be a nonempty axially symmetric slice-domain. Then $\Omega_\mathbb{R}\neq\varnothing$.
\end{prop}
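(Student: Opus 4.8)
The plan is to argue by contradiction: assume $\Omega_\mathbb{R}=\varnothing$ and show that axial symmetry then forces a real point back into $\Omega$.

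First I would observe that the hypothesis $\Omega_\mathbb{R}=\varnothing$ makes $\Omega_\mathbb{R}$ vacuously connected, so $\Omega$ qualifies as a real-connected slice-domain. This lets me apply Proposition \ref{pr-psa} (i), which produces a single imaginary unit $I\in\mathbb{S}$ with $\Omega\subset\mathbb{C}_I^n$. The intuition is that a slice-connected set avoiding the reals cannot spread across several slices, since distinct slices meet only along $\mathbb{R}^n$, and is therefore pinned to one slice $\mathbb{C}_I^n$.

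Next, since $\Omega$ is nonempty, I would pick a point $x+yI\in\Omega$ with $x,y\in\mathbb{R}^n$. By axial symmetry, $x+yJ\in\Omega$ for every $J\in\mathbb{S}$; choosing any $J$ with $J\neq\pm I$ (such $J$ exists because $\mathbb{S}$ is infinite), the point $x+yJ$ lies in $\Omega\subset\mathbb{C}_I^n$ and, by its very form, also in $\mathbb{C}_J^n$. Using the identity $\mathbb{C}_I^n\cap\mathbb{C}_J^n=\mathbb{R}^n$ valid for $I\neq\pm J$, I conclude $x+yJ\in\mathbb{R}^n$, hence $x+yJ\in\Omega\cap\mathbb{R}^n=\Omega_\mathbb{R}$, which contradicts $\Omega_\mathbb{R}=\varnothing$.

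The argument is essentially immediate once Proposition \ref{pr-psa} (i) is available, so I do not expect a genuine obstacle. The only points deserving a line of care are verifying that the empty real slice still renders $\Omega$ real-connected (so that the proposition applies) and that a third imaginary unit $J\neq\pm I$ can be chosen; both are routine. The real content is the tension between axial symmetry, which replicates any point $x+yI$ across all slices $\mathbb{C}_J^n$, and the slice-connectedness conclusion that $\Omega$ is confined to a single slice, whose intersection with any other slice is purely real.
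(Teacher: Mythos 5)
Your proof is correct. The paper itself offers no argument at all---its proof reads ``It follows from the definition''---so your write-up is genuinely more informative: it makes explicit that the statement is \emph{not} a direct consequence of axial symmetry alone, but requires slice-connectedness. Indeed, a set such as $\{x+yJ\in\mathbb{O}_s^n: J\in\mathbb{S},\ |y|>1\}$ is nonempty, slice-open and axially symmetric with empty real trace, and it is saved from being a counterexample only because it is not slice-connected; so some appeal to connectedness, exactly as in your use of Proposition \ref{pr-psa} (i), is unavoidable. Your argument then closes correctly: axial symmetry pushes a point $x+yJ$ (with $J\neq\pm I$) into $\Omega\subset\mathbb{C}_I^n$, and $\mathbb{C}_I^n\cap\mathbb{C}_J^n=\mathbb{R}^n$ forces $x+yJ\in\Omega_\mathbb{R}$, contradicting $\Omega_\mathbb{R}=\varnothing$. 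The one point you rightly flag---whether $\Omega$ with $\Omega_\mathbb{R}=\varnothing$ counts as real-connected, which hinges on the convention that the empty set is connected---is immaterial in substance: the paper's proof of Proposition \ref{pr-psa} (i) uses only slice-connectedness and the hypothesis $\Omega_\mathbb{R}=\varnothing$, never real-connectedness, so you could equally well invoke that argument directly and bypass the convention issue entirely.
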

\begin{proof}
It follows from the definition.
\end{proof}
In the sequel, we shall always assume that $\Omega\subset \mathbb{O}_s^n$
is nonempty.
\begin{thm}\label{tm-ws}
	Weak slice regular functions defined on axially symmetric slice-domains are slice functions.
\end{thm}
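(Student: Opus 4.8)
The plan is to produce, for a weak slice regular $f$ on an axially symmetric slice-domain $\Omega$, an explicit companion $\tilde f$ that is manifestly a slice function and is itself weak slice regular, and then to force $f=\tilde f$ by the Identity Principle. First I would fix two distinct imaginary units $J,K\in\mathbb{S}$ and observe that, by axial symmetry, whenever $x+yI\in\Omega$ the whole circle $x+y\mathbb{S}$ lies in $\Omega$; in particular $x+yJ,x+yK\in\Omega$. This lets me define on $\Omega_{s_1}$ the octonion-valued maps
\[
a(x,y):=(J-K)^{-1}[Jf(x+yJ)-Kf(x+yK)],\qquad b(x,y):=(J-K)^{-1}[f(x+yJ)-f(x+yK)],
\]
and set $\tilde f(x+yI):=a(x,y)+Ib(x,y)$. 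Taking $F:=(a,b)^T$, this is exactly the representation required in Definition \ref{df-sf}, so $\tilde f$ is a slice function; moreover $\tilde f$ is precisely the right-hand side of the linear form \eqref{eq-lfi}.

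The technical heart is to verify that $\tilde f$ is weak slice regular. Writing $u:=f(x+yJ)$ and $v:=f(x+yK)$, holomorphy of $f_J$ and $f_K$ gives $\partial_{x_m}u=-J\partial_{y_m}u$ and $\partial_{x_m}v=-K\partial_{y_m}v$ for every $m$. For an arbitrary $I\in\mathbb{S}$ I would expand
\[
(\partial_{x_m}+I\partial_{y_m})(a+Ib)=(\partial_{x_m}a-\partial_{y_m}b)+I(\partial_{y_m}a+\partial_{x_m}b),
\]
where the simplification $I(I\partial_{y_m}b)=(II)\partial_{y_m}b=-\partial_{y_m}b$ uses the alternativity of $\mathbb{O}$, equivalently $L_I\circ L_I=-1$. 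Substituting the two holomorphy relations and again using alternativity to collapse $J(J\,\cdot)=-(\cdot)$ and $K(K\,\cdot)=-(\cdot)$, one checks $\partial_{x_m}a=\partial_{y_m}b$ and $\partial_{x_m}b=-\partial_{y_m}a$, so both brackets vanish identically and $\tilde f_I$ is holomorphic for every $I$. I expect this to be the main obstacle: the non-associativity of $\mathbb{O}$ is exactly what threatens the computation, and it is alternativity (Artin's theorem, so that any two elements generate an associative subalgebra) that rescues every step.

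Next I would identify $f$ and $\tilde f$ on one slice. Setting $I=J$ in the linear form for $f$ \eqref{eq-lff}, the second summand vanishes and the first collapses to $(J-K)[(J-K)^{-1}u]=u$, the last equality holding because $(J-K)^{-1}$, $(J-K)$ and $u$ lie in a common associative subalgebra; equivalently, $(1,L_J)$ is the first row of the matrix in \eqref{eq-mfs}, so $(1,L_J)\bigl(\begin{smallmatrix}1&L_J\\1&L_K\end{smallmatrix}\bigr)^{-1}=(1,0)$. Hence $\tilde f(x+yJ)=f(x+yJ)$, so $f$ and $\tilde f$ coincide on the slice $\Omega_J$, which is a nonempty open subset of $\mathbb{C}_J^n$ since $\Omega_\mathbb{R}\neq\varnothing$ by Proposition \ref{pr-vn}.

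Finally, both $f$ and $\tilde f$ are weak slice regular on the slice-domain $\Omega$ and agree on the nonempty open set $\Omega_J$, so the Identity Principle (Theorem \ref{th-idh}) yields $f=\tilde f$ on all of $\Omega$. As $\tilde f$ is a slice function by construction, so is $f$, which is the claim. I would remark that this immediately makes the representation formulas of Corollaries \ref{co-grf} and \ref{co-rf} available for $f$.
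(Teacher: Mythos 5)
Your strategy is the same as the paper's (build a companion function that is slice by construction and weak slice regular, then invoke the Identity Principle, Theorem \ref{th-idh}), and your computational steps are sound: the relations $\partial_{x_m}a=\partial_{y_m}b$ and $\partial_{x_m}b=-\partial_{y_m}a$ do follow from $L_J\circ L_J=L_K\circ L_K=-1$, and the collapse $(1,L_J)\bigl(\begin{smallmatrix}1&L_J\\1&L_K\end{smallmatrix}\bigr)^{-1}=(1,0)$ is a legitimate operator identity. The genuine gap is at the very first step: the assignment $\tilde f(x+yI):=a(x,y)+Ib(x,y)$ does not define a function on $\Omega$. Every non-real point has exactly two representations, $x+yI=x+(-y)(-I)$, to which your formula assigns the values $a(x,y)+Ib(x,y)$ and $a(x,-y)-Ib(x,-y)$. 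Since $\mathbb{O}$ has no zero divisors and all units $I$ are admissible by axial symmetry, consistency of these two values forces $a$ to be even and $b$ to be odd in $y$; unwinding, that says $f(x+yJ)+f(x-yJ)=f(x+yK)+f(x-yK)$ and $J[f(x+yJ)-f(x-yJ)]=K[f(x+yK)-f(x-yK)]$, i.e.\ precisely the cross-slice compatibility that expresses sliceness of $f$ --- the thing you are trying to prove. For generic distinct $J,K$ the four values $f(x\pm yJ)$, $f(x\pm yK)$ are unrelated a priori, so the ``definition'' of $\tilde f$ is circular as it stands.

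There are two repairs. (a) Choose $K=-J$: then $a(x,y)=\frac12[f(x+yJ)+f(x-yJ)]$ and $b(x,y)=-\frac12 J[f(x+yJ)-f(x-yJ)]$ are automatically even/odd in $y$, so well-definedness is restored; this is exactly the paper's proof, which builds $g[J']$ for every $J'$ from the values $f(x\pm yI)$ on a fixed antipodal pair of slices and explicitly verifies the consistency $g[-J']=g[J']$ before gluing the slice-wise definitions into one function $h$. (b) Keep general $J,K$ but define $\tilde f$ only through positive representations $(x,y)\in\Omega_{s_2}^+$, as Definition \ref{df-sf} permits; then $\tilde f$ is well-defined and slice, but your holomorphy verification now covers only the half of each slice where $(x,y)$ is positive --- on the other half $\tilde f_I(x+yI)=a(x,-y)-Ib(x,-y)$, so you must re-run the computation for the reflected formula and check $C^1$ gluing across $y=0$ (this works, using $b(\cdot,0)\equiv 0$ and $\partial_{y_m}a(\cdot,0)=-\partial_{x_m}b(\cdot,0)=0$, but none of it appears in your argument); likewise the identification with $f$ is then only established on the positive part of $\Omega_J$, or more simply on $\Omega_{\mathbb{R}}$, which the Identity Principle still accepts. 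Either repair is short, but as written the proof does not go through; with choice (a) it becomes, essentially verbatim, the proof in the paper.
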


\begin{proof}
	Let $\Omega$ be an axially symmetric slice-domain in $\mathbb{O}_s^n$ and let $f:\Omega\rightarrow\mathbb{O}$ be weak slice regular. Then $\Omega_{\mathbb{R}}\neq\varnothing$, by Proposition \ref{pr-vn}.

	Fix $I\in\mathbb{S}$. We can define the functions $g[J]:\Omega_J\rightarrow\mathbb{O}$, $J\in\mathbb{S}$, by
	\begin{equation}\label{eq-gjy}
		g[J](x+yJ)=\frac{1}{2}[f(x+yI)+f(x-yI)]-\frac{J}{2}\{I[f(x+yI)-f(x-yI)]\},
	\end{equation}
	for any $x+yJ\in\Omega_J$. Note that for any $x\in\Omega_\mathbb{R}$ we have
	\begin{equation*}
		g[J](x)=\frac{1}{2}[f(x)+f(x)]-\frac{J}{2}\{I[f(x)-f(x)]\}=f(x),
	\end{equation*}
	and for any $x+yJ\in\Omega$
	\begin{equation*}
		\begin{split}
			&g[-J](x+yJ)=g[-J](x+(-y)(-J))
			\\=&\frac{1}{2}[f(x-yI)+f(x+yI)]-\frac{-J}{2}\{I[f(x-yI)-f(x+yI)]\}
			\\=&\frac{1}{2}[f(x+yI)+f(x-yI)]-\frac{J}{2}\{I[f(x+yI)-f(x-yI)]\}=g[J](x+yJ).
		\end{split}
	\end{equation*}
	It is clear that there is a function $h:\Omega\rightarrow\mathbb{O}$ such that
	\begin{equation}\label{eq-hxy}
		h(x+yJ)=g[J](x+yJ),\qquad\forall\ x+yJ\in\Omega.
	\end{equation}
Moreover,	\eqref{eq-gjy}, \eqref{eq-hxy} show that the function $F:\Omega_{s_1}\rightarrow\mathbb{O}^{2\times 1}$ defined by
	\begin{equation}\label{eq-st1}
		F(u,v)=\begin{pmatrix}
			\frac{1}{2}[f(u+vI)+f(u-vI)]
			\\
			\quad \\ -\frac{1}{2}I[f(u+vI)-f(u-vI)]
		\end{pmatrix},\qquad\forall\ (u,v)\in\Omega_{s_1},
	\end{equation}
	is a stem function of $h$ so that $h$ is slice.
	
	By directly calculation, for each $\ell\in\{1,2,...,n\}$ and $x+yJ\in\Omega$.
	\begin{equation*}
		\begin{split}
			&\frac{1}{2}(\frac{\partial}{\partial x_\ell}+J\frac{\partial}{\partial y_\ell})g[J](x+yJ)
			\\=&\frac{1}{2}(\frac{\partial}{\partial x_\ell}+J\frac{\partial}{\partial y_\ell})\bigg(\frac{1}{2}[f(x+yI)+f(x-yI)]
			-\frac{J}{2}\{I[f(x+yI)-f(x-yI)]\}\bigg)
			\\=&\frac{1-JI}{4}\left((\frac{\partial}{\partial x_\ell}+I\frac{\partial}{\partial y_\ell})f(x+yI)\right)
			+\frac{1+JI}{4}\left((\frac{\partial}{\partial x_\ell}+(-I)\frac{\partial}{\partial y_\ell})f(x+y(-I))\right)
			\\=&0.
		\end{split}
	\end{equation*}
	It implies that $g[J]$ is holomorphic, and so is $h_J$. Hence $h$ is weak slice regular. Since $h=g[I]=f$ on $\Omega_\mathbb{R}$, it follows from the Identity Principle \ref{th-idh} that $f=g$ on $\Omega$. Then $f$ is slice, since $g$ is slice.

\end{proof}

\begin{cor}(General Representation Formula)
	Let $\Omega$ be an axially symmetric slice-domain in $\mathbb{O}_s^n$ and $f:\Omega\rightarrow\mathbb{O}$ be a weak slice regular function. Then
	\begin{equation*}
	f(x+yI)=(I-K)[(J-K)^{-1}f(x+yJ)]+(I-J)[(K-J)^{-1}f(x+yK)].
	\end{equation*}
	for each $(x,y)\in\mathbb{R}^{2n}$ and $I,J,K\in\mathbb{S}$ with $J\neq K$ and $x+yI,x+yJ,x+yK\in\Omega$.
\end{cor}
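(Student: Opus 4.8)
The plan is to deduce this statement immediately from the two results that precede it, so that essentially no new argument is required. The substantive content has already been absorbed into Theorem \ref{tm-ws}, and what remains is a short bookkeeping step that chains two implications together.

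The first move is to invoke Theorem \ref{tm-ws}: since $\Omega$ is an axially symmetric slice-domain and $f\colon\Omega\to\mathbb{O}$ is weak slice regular, that theorem asserts that $f$ is in fact a slice function in the sense of Definition \ref{df-sf}. This is precisely the bridge needed to pass from the weak regularity hypothesis (holomorphy on each slice $\mathbb{C}_I^n$) to the algebraic structure encoded by a stem function. The second move is then to apply Corollary \ref{co-grf}, the General Representation Formula for slice functions, which states under the sole assumption that $f$ is a slice function that
\[
f(x+yI)=(I-K)[(J-K)^{-1}f(x+yJ)]+(I-J)[(K-J)^{-1}f(x+yK)]
\]
for all $(x,y)\in\mathbb{R}^{2n}$ and $I,J,K\in\mathbb{S}$ with $J\neq K$ and $x+yI,x+yJ,x+yK\in\Omega$. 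Since the hypotheses of Corollary \ref{co-grf} are now met, the displayed formula follows at once.

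The only point worth verifying is that there is no domain mismatch between the two invocations. Corollary \ref{co-grf} is stated for slice functions on an arbitrary subset $\Omega\subset\mathbb{O}_s^n$, so the axially symmetric slice-domain considered here is certainly admissible, and the second step consumes no connectedness or symmetry assumption beyond what Theorem \ref{tm-ws} has already used. Consequently I expect no genuine obstacle at the level of this corollary: the entire difficulty is confined to Theorem \ref{tm-ws}, whose proof constructs the stem function explicitly via the representation-formula ansatz \eqref{eq-gjy}, checks holomorphy slicewise, and then appeals to the Identity Principle (Theorem \ref{th-idh}) to identify $f$ with the slice function so produced.
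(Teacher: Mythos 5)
Your proof is correct and matches the paper's own argument exactly: the paper likewise derives this corollary as an immediate consequence of Theorem \ref{tm-ws} (weak slice regular functions on axially symmetric slice-domains are slice functions) followed by Corollary \ref{co-grf} (the General Representation Formula for slice functions). Your additional remark that Corollary \ref{co-grf} holds for arbitrary subsets of $\mathbb{O}_s^n$, so no domain-compatibility issue arises, is a sound observation but not a point the paper needed to make explicit.
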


\begin{proof}
It is an immediate consequence of Corollary \ref{co-grf} and Theorem \ref{tm-ws}.
\end{proof}

\begin{cor}(Representation Formula)\label{cor-repr}
	Let $\Omega$ be an axially symmetric slice-domain in $\mathbb{O}_s^n$ and $f:\Omega\rightarrow\mathbb{O}$ a weak slice regular function. Then
	\begin{equation*}
	f(x+yI)=\frac{1}{2}[f(x+yJ)+f(x-yJ)]-\frac{I}{2}\{J[f(x+yJ)-f(x-yJ)]\}.
	\end{equation*}
	for each $(x,y)\in\mathbb{R}^{2n}$ and $I,J\in\mathbb{S}$ with $x+yI,x\pm yJ\in\Omega$.
\end{cor}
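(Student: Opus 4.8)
The plan is to derive this Representation Formula in exactly the same way that the preceding General Representation Formula for weak slice regular functions was obtained, namely by reducing to the slice-function case. All of the analytic content has already been packaged into Theorem \ref{tm-ws} and Corollary \ref{co-rf}, so the argument is a two-line combination of these.

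First I would apply Theorem \ref{tm-ws}: since $\Omega$ is an axially symmetric slice-domain and $f$ is weak slice regular on $\Omega$, that theorem asserts that $f$ is a slice function in the sense of Definition \ref{df-sf}. This is the only step that uses weak slice regularity together with the axial symmetry of $\Omega$; once $f$ is known to be a slice function, no further regularity input is needed. Second, I would feed this into Corollary \ref{co-rf}. That corollary holds for an arbitrary slice function on an arbitrary subset of $\mathbb{O}_s^n$, under precisely the admissibility condition $x+yI, x\pm yJ\in\Omega$ on the points involved, and its conclusion is verbatim the identity we must prove. Hence the formula follows immediately.

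I do not expect any genuine obstacle at the level of this corollary: the difficulty has been front-loaded into Theorem \ref{tm-ws}, whose proof constructs the candidate stem function \eqref{eq-st1} out of the symmetrized data $\tfrac12[f(u+vI)+f(u-vI)]$ and $-\tfrac12 I[f(u+vI)-f(u-vI)]$ and then invokes the Identity Principle (Theorem \ref{th-idh}) to identify the resulting slice function with $f$.

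If one wished to avoid citing Corollary \ref{co-rf} directly, an equivalent route is to start from the General Representation Formula for weak slice regular functions (the corollary immediately preceding this one) and specialize it by setting $K=-J$, using $J^{-1}=-J$ and $x+y(-J)=x-yJ$ for $J\in\mathbb{S}$; after simplifying with the alternativity of $\mathbb{O}$ one recovers the coefficients $\tfrac12$ and $-\tfrac{I}{2}$ in the claimed identity. This is in fact how Corollary \ref{co-rf} itself was deduced from Corollary \ref{co-grf}, so the two routes coincide and either one closes the proof.
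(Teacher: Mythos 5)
Your proposal matches the paper's own proof exactly: the paper also deduces this corollary immediately from Theorem \ref{tm-ws} (weak slice regular on an axially symmetric slice-domain implies slice function) combined with Corollary \ref{co-rf}, and likewise notes the alternative route of specializing the General Representation Formula by taking $K=-J$. Both of your routes are precisely the ones the paper cites, so the proof is correct and essentially identical.
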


\begin{proof}
It is an immediate consequence of Corollary \ref{co-rf} and Theorem \ref{tm-ws} and also a particular case of Corollary \ref{cor-repr} when $K=-J$.
\end{proof}

\begin{lem}\label{lm-ws}
	Weak slice regular functions defined on axially symmetric slice-domains are strong slice regular.
\end{lem}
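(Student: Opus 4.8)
The plan is to build directly on Theorem \ref{tm-ws}, which already does the heavy lifting: a weak slice regular function $f$ on an axially symmetric slice-domain $\Omega$ is a slice function, and its proof exhibits an explicit stem function, namely the $F$ of \eqref{eq-st1} attached to a fixed $I\in\mathbb{S}$. By Definition \ref{df-sr}, strong slice regularity means precisely that $f$ admits a \emph{holomorphic} stem function. So the whole task reduces to showing that this particular $F\colon\Omega_{s_1}\to\mathbb{O}^{2\times1}$, with components $F_1(x,y)=\frac{1}{2}[f(x+yI)+f(x-yI)]$ and $F_2(x,y)=-\frac{1}{2}I[f(x+yI)-f(x-yI)]$, is holomorphic in the sense of the definition preceding Definition \ref{df-sr}.

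First I would dispose of the domain issue hidden in that definition, which requires $F$ to extend to a function with continuous partial derivatives on an open set $U\supset\Omega_{s_1}$. Here axial symmetry is essential: for our fixed $I$ one has $(x,y)\in\Omega_{s_1}$ if and only if $x+yI\in\Omega$, because $x+yJ\in\Omega$ forces $x+y\mathbb{S}\subset\Omega$. Hence the homeomorphism $x+yI\mapsto(x,y)$ carries the Euclidean-open slice $\Omega_I$ onto $\Omega_{s_1}$, so $\Omega_{s_1}$ is itself open in $\mathbb{R}^{2n}$ and we may simply take $U=\Omega_{s_1}$ and $G=F$. Continuity of the partial derivatives of $F_1,F_2$ then follows from the holomorphy, hence smoothness, of the restrictions $f_I$ and $f_{-I}$, which is exactly what weak slice regularity provides.

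The core of the argument is the verification of the Cauchy--Riemann system $\frac{1}{2}(\frac{\partial}{\partial x_m}+\sigma\frac{\partial}{\partial y_m})F=0$ for every $m\in\{1,\dots,n\}$. The two inputs are the holomorphy relations on the slices $\mathbb{C}_I$ and $\mathbb{C}_{-I}$, namely $\partial_{x_m}[f(x+yI)]=-I\,\partial_{y_m}[f(x+yI)]$ and $\partial_{x_m}[f(x-yI)]=I\,\partial_{y_m}[f(x-yI)]$. Feeding these into the two entries of $\frac{1}{2}(\frac{\partial}{\partial x_m}+\sigma\frac{\partial}{\partial y_m})F$, the first entry $\partial_{x_m}F_1-\partial_{y_m}F_2$ collapses at once using only left distributivity of $I$. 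The second entry $\partial_{x_m}F_2+\partial_{y_m}F_1$ is where a nested product of the form $I(I\,a)$ appears, and this is the step I expect to demand the most care, since octonionic non-associativity could obstruct the cancellation. It is resolved cleanly by the identity $L_I\circ L_I=-1$ recorded just before \eqref{eq-cjk}, which gives $I(Ia)=-a$ for every $a\in\mathbb{O}$ and makes the second entry vanish as well.

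Once both entries are shown to be zero, $F$ is a holomorphic stem function of $f$, and therefore $f$ is strong slice regular by Definition \ref{df-sr}, which closes the lemma. The only ingredients beyond Theorem \ref{tm-ws} are the openness of $\Omega_{s_1}$, which genuinely relies on axial symmetry, and the alternativity identity $L_I\circ L_I=-1$; no further analytic input is needed, so the proof should be short once these two points are in place.
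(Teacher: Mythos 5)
Your proposal is correct and follows essentially the same route as the paper: it takes the stem function $F$ of \eqref{eq-st1} supplied by Theorem \ref{tm-ws} and verifies the Cauchy--Riemann system for $F$ using holomorphy of $f$ on $\Omega_I$ and $\Omega_{-I}$ together with $L_I\circ L_I=-1$; the paper merely packages your entry-by-entry computation in matrix form, via \eqref{eq-ffs} and the intertwining relation \eqref{eq-ig}, where the same identity $L_I\circ L_I=-1$ is hidden. Your explicit remark that axial symmetry makes $\Omega_{s_1}$ open in $\mathbb{R}^{2n}$ (so one may take $U=\Omega_{s_1}$ and $G=F$ in the definition of holomorphic stem function) is a point the paper leaves implicit, but it does not alter the substance of the argument.
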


\begin{proof}
	Let $\Omega$ be an axially symmetric slice-domain in $\mathbb{O}_s^n$ and let $f:\Omega\rightarrow\mathbb{O}$ be a weak slice-regular with stem function $F:\Omega_{s_1}\rightarrow\mathbb{O}^{2\times 1}$ defined by \eqref{eq-st1}. It is clear by \eqref{eq-st1}, that $F$ has continuous partial derivatives.
	
	Let us fix $I\in\mathbb{S}$. Using \eqref{eq-mfm} we can rewrite \eqref{eq-st1} as
	\begin{equation}\label{eq-ffs}
	F(x,y)=\begin{pmatrix}
	1&I\\1&-I
	\end{pmatrix}^{-1}
	\begin{pmatrix}
	f(x+yI)\\f(x-yI)
	\end{pmatrix},\qquad\forall\ (x,y)\in\Omega_{s_1}.
	\end{equation}
	
Since
	\begin{equation}\label{eq-ig}
	\begin{pmatrix}
	1&I\\1&-I
	\end{pmatrix}\sigma
	=\begin{pmatrix}
	I&0\\0&-I
	\end{pmatrix}
	\begin{pmatrix}
	1&I\\1&-I
	\end{pmatrix},
	\end{equation}
	by \eqref{eq-ffs} and \eqref{eq-ig}, it follows that
	\begin{equation}\label{eq-fh}
	\frac{1}{2}(\frac{\partial}{\partial x_\ell}+\sigma\frac{\partial}{\partial y_\ell})F(x,y)=
	\begin{pmatrix}
	1&I\\1&-I
	\end{pmatrix}^{-1}
	\begin{pmatrix}
		\frac{1}{2}\left(\frac{\partial}{\partial x_\ell}+I\frac{\partial}{\partial y_\ell}\right)f(x+Iy)
\\ \quad		\\ \frac{1}{2}(\frac{\partial}{\partial x_\ell}+(-I)\frac{\partial}{\partial y_\ell})f(x+(-I)y)
	\end{pmatrix}
	\end{equation}
	for $\ell\in\{1,2,...,n\}$ and $x,y\in\Omega_{s_1}$. Since $f$ is holomorphic on $\Omega_I$ (resp. $\Omega_{-I}$), $F$ is holomorphic by \eqref{eq-fh} and we conclude that $f$ is strong slice regular.
\end{proof}

\begin{thm}\label{th-mt}
The class of weak slice regular and strong slice regular functions coincide on axially symmetric slice-domains.
\end{thm}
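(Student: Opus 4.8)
The plan is to read Theorem \ref{th-mt} as the assertion that, over a fixed axially symmetric slice-domain $\Omega\subset\mathbb{O}_s^n$, the set of weak slice regular functions and the set of strong slice regular functions are literally the same set of maps $\Omega\to\mathbb{O}$. Accordingly, I would prove the statement by establishing the two inclusions separately and then combining them; since both inclusions have already been set up in the preceding results, the argument is essentially a bookkeeping step that packages Remark \ref{rm-sw} and Lemma \ref{lm-ws} together.

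For the inclusion of strong into weak, I would observe that an axially symmetric slice-domain is in particular a slice-open set in $\tau_s$, so Remark \ref{rm-sw} applies verbatim: every strong slice regular function on $\Omega$ is weak slice regular, because its restriction $f_I=f|_{\Omega_I}$ is holomorphic for each $I\in\mathbb{S}$ by Proposition \ref{pr-ws}. Thus the class of strong slice regular functions on $\Omega$ is contained in the class of weak slice regular ones.

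For the reverse inclusion, I would invoke Lemma \ref{lm-ws} directly: every weak slice regular function on an axially symmetric slice-domain is strong slice regular. This is the substantive direction, and I expect it to be where the real content lives. The point is that weak slice regularity only controls the behaviour of $f$ along each slice $\mathbb{C}_I^n$, so one must first recover a genuine stem function defined on $\Omega_{s_1}$; this is exactly what Theorem \ref{tm-ws} provides by showing that such an $f$ is a slice function, with the explicit stem function $F$ given in \eqref{eq-st1}. Lemma \ref{lm-ws} then checks, using the intertwining relation \eqref{eq-ig} together with the slicewise holomorphy of $f$ on $\Omega_I$ and $\Omega_{-I}$, that $F$ itself is holomorphic, which is precisely the requirement in Definition \ref{df-sr} for strong slice regularity. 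The axial symmetry and the nonemptiness of $\Omega_\mathbb{R}$ (Proposition \ref{pr-vn}) are what make the representation-formula construction of $F$ well posed and independent of the auxiliary choices, so this hypothesis cannot be dropped.

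Combining the two inclusions, the classes coincide on axially symmetric slice-domains, which is the claim. I would therefore conclude that there is no genuine obstacle remaining at this stage: the potentially delicate step, namely reconstructing a holomorphic stem function from purely slicewise data, has already been discharged in Theorem \ref{tm-ws} and Lemma \ref{lm-ws}, and the present theorem only records the resulting equality of function classes.
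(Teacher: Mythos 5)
Your proposal is correct and follows essentially the same route as the paper, whose proof of Theorem \ref{th-mt} is precisely the combination of Proposition \ref{pr-ws} (equivalently Remark \ref{rm-sw}) for the inclusion of strong into weak, and Lemma \ref{lm-ws} (resting on Theorem \ref{tm-ws}) for the converse. Your additional remarks on where the real content lies accurately reflect the structure of the paper's argument.
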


\begin{proof}
	The result follows by Proposition \ref{pr-ws} and Lemma \ref{lm-ws}.
\end{proof}

\section{Taylor series}

An important feature of holomorphic functions in several complex variables is the fact that they can be expanded as Taylor series. A function theory generalizing the one of holomorphic functions is expected to have this
property and in fact, in this section, we prove that  weak slice regular functions admit a Taylor series expansion.

\subsection{Slice derivatives}

To start with, we generalize the notion of slice derivative to the case of functions in several octonionic variables.

\begin{defn}\label{df-lis}
	Let $I\in\mathbb{S}$, $\Omega$ be an open set in $\mathbb{C}_I^n$, $\ell\in\{1,...,n\}$ and $f:\Omega\rightarrow\mathbb{O}$ be real differentiable. The $I,\ell$-derivative $\partial_{I,\ell}f:\Omega\rightarrow\mathbb{R}^{2n}$ of $f$ is defined by
	\begin{equation*}
		\partial_{I,\ell}f(x+yI):=\frac{1}{2}\left(\frac{\partial}{\partial x_\ell}-I\frac{\partial}{\partial y_\ell}\right)f_I(x+yI).
	\end{equation*}
	
Let $\alpha=(\alpha_1,...,\alpha_n)\in\mathbb{N}^n$. We set
	\begin{equation*}
		f^{(I,\alpha)}:=(\partial_{I,1})^{\alpha_1}\cdots(\partial_{I,n})^{\alpha_n} f,
	\end{equation*}
when the right-hand side is defined.
\end{defn}

\begin{prop}
	Let $I\in\mathbb{S}$, $\Omega$ be an open set in $\mathbb{C}_I^n$, and $f:\Omega\rightarrow\mathbb{O}$ be real differentiable. Then
	\begin{equation*}
		\partial_{I,\ell}f=\partial_{-I,\ell}f,\qquad \forall\ \ell\in\{1,...,n\}.
	\end{equation*}

	Moreover, if $f$ is holomorphic, then
	\begin{equation}\label{eq-pie}
		\partial_{I,\ell}f=\frac{\partial}{\partial x_\ell} f,\qquad \forall\
\ell\in\{1,...,n\}.
	\end{equation}
\end{prop}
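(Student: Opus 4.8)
The plan is to establish both identities by direct computation from Definition \ref{df-lis}, the only subtleties being the change of orientation when passing from $I$ to $-I$ and the careful use of the alternativity of $\mathbb{O}$ (so that $I(Io)=(II)o=-o$ for every $o\in\mathbb{O}$) in place of full associativity.

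First, for $\partial_{I,\ell}f=\partial_{-I,\ell}f$, I would observe that $\mathbb{C}_I^n=\mathbb{C}_{-I}^n$, so the same open set $\Omega$ is the domain of both $f_I$ and $f_{-I}$. A point written as $x+yI$ in the $I$-coordinates is written as $x+(-y)(-I)$ in the $-I$-coordinates; hence if $(x',y')$ denote the $-I$-coordinates, then $x'=x$ and $y'=-y$, so that $\partial/\partial x'_\ell=\partial/\partial x_\ell$ and $\partial/\partial y'_\ell=-\partial/\partial y_\ell$. Substituting these into the definition of $\partial_{-I,\ell}$ gives
$$\partial_{-I,\ell}f=\frac{1}{2}\Bigl(\frac{\partial}{\partial x'_\ell}-(-I)\frac{\partial}{\partial y'_\ell}\Bigr)f=\frac{1}{2}\Bigl(\frac{\partial}{\partial x_\ell}+I\cdot\bigl(-\frac{\partial}{\partial y_\ell}\bigr)\Bigr)f=\frac{1}{2}\Bigl(\frac{\partial}{\partial x_\ell}-I\frac{\partial}{\partial y_\ell}\Bigr)f=\partial_{I,\ell}f,$$
where the two sign changes (from $-(-I)$ and from the chain rule in $y$) cancel exactly.

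Next, for the holomorphic case I would start from the Cauchy--Riemann-type condition $\frac{\partial}{\partial x_\ell}f+I\frac{\partial}{\partial y_\ell}f=0$ satisfied by a holomorphic $f$. Left-multiplying by $I$ and using $I(Io)=-o$ yields $I\frac{\partial}{\partial x_\ell}f-\frac{\partial}{\partial y_\ell}f=0$, i.e. $\frac{\partial}{\partial y_\ell}f=I\frac{\partial}{\partial x_\ell}f$. Inserting this into the definition of $\partial_{I,\ell}f$ gives
$$\partial_{I,\ell}f=\frac{1}{2}\Bigl(\frac{\partial}{\partial x_\ell}f-I\bigl(I\frac{\partial}{\partial x_\ell}f\bigr)\Bigr)=\frac{1}{2}\Bigl(\frac{\partial}{\partial x_\ell}f+\frac{\partial}{\partial x_\ell}f\Bigr)=\frac{\partial}{\partial x_\ell}f,$$
again by the alternative law $I(Io)=-o$, which proves \eqref{eq-pie}.

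The only real point of care --- and the step I expect to be the main obstacle --- is that $\mathbb{O}$ is non-associative: every manipulation above must invoke only the left alternative identity $I(Io)=(II)o=-o$, valid because the associator $[I,I,o]$ vanishes in any alternative algebra, and never a general rearrangement of products. With that caveat the computation is routine, and the two asserted identities follow immediately.
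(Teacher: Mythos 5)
Your proof is correct, and its first half coincides with the paper's: you rewrite $x+yI=x+(-y)(-I)$ so that the sign coming from the chain rule in $y$ cancels the sign of $-(-I)$, which is exactly the paper's computation. For the second identity \eqref{eq-pie} your route is genuinely different. You left-multiply the Cauchy--Riemann equation by $I$ and invoke the left alternative law $I(Io)=-o$ twice, first to solve for $\partial f/\partial y_\ell$ and then again after substituting. The paper instead uses the purely additive operator identity
\begin{equation*}
\partial_{I,\ell}f=\frac{\partial}{\partial x_\ell}f-\frac{1}{2}\Bigl(\frac{\partial}{\partial x_\ell}+I\frac{\partial}{\partial y_\ell}\Bigr)f,
\end{equation*}
where the subtracted term vanishes by holomorphy. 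Note what this buys: the paper's manipulation never composes two left multiplications by $I$ --- each term contains at most one factor of $I$ --- so the non-associativity of $\mathbb{O}$, which you single out as ``the main obstacle,'' simply never enters. Your appeal to alternativity is legitimate (the associator $[I,I,o]$ vanishes in any alternative algebra, so $I(Io)=I^2o=-o$ is valid), hence your argument is sound; but the paper's additive trick shows that the caution is avoidable, and it yields a marginally cleaner proof that would work verbatim even in settings where alternativity is not available.
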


\begin{proof}
	(i) By direct calculation, for each $\ell\in\{1,...,n\}$ and $x+yI\in\Omega$,
	\begin{equation*}
		\begin{split}
			\partial_{I,\ell}f(x+yI)=&\frac{1}{2}\left(\frac{\partial}{\partial x_\ell}-I\frac{\partial}{\partial y_\ell}\right)f(x+yI)
			\\=&\frac{1}{2}\left(\frac{\partial}{\partial x_\ell}-(-I)\frac{\partial}{\partial (-y_\ell)}\right)f(x+(-y)(-I))
			\\=&\partial_{-I,\ell}f(x+yI).
		\end{split}
	\end{equation*}

	(ii) Suppose that $f$ is holomorphic. Then for each $\ell\in\{1,...,n\}$
and $x+yI\in\Omega$,
	\begin{equation*}
		\frac{1}{2}\left(\frac{\partial}{\partial x_\ell}+I\frac{\partial}{\partial y_\ell}\right)f(x+yI)=0.
	\end{equation*}
	It implies that
	\begin{equation*}
		\partial_{I,\ell}f(x+yI)
		=\frac{\partial}{\partial x_\ell}f(x+yI)
		-\frac{1}{2}\left(\frac{\partial}{\partial x_\ell}+I\frac{\partial}{\partial y_\ell}\right)f(x+yI)
		=\frac{\partial}{\partial x_\ell}f(x+yI).
	\end{equation*}
\end{proof}

The set of purely imaginary octonions is denoted by $Im(\mathbb{O})$ and we observe that
\begin{equation*}
	Im(\mathbb{O})=\{tI\in\mathbb{O}:t\in\mathbb{R},\ I\in\mathbb{S}\}.
\end{equation*}
It is easy to check that $Im(\mathbb{O})$ is a $7$-dimensional real vector space and that
\begin{equation*}
	\mathbb{O}^n=\mathbb{R}^{n}\oplus Im(\mathbb{O})^n.
\end{equation*}

\begin{defn} (Slice derivatives)
	Let $\Omega$ be a slice-open set in $\mathbb{O}_s^n$,  and $f:\Omega\rightarrow\mathbb{O}$ be weak slice regular. The $\ell$-slice derivative $\partial_\ell f:\Omega\rightarrow \mathbb{O}$ of $f$, where $\ell\in\{1,...,d\}$, is defined by
	\begin{equation*}
		\partial_\ell f (x+w)=\frac{\partial}{\partial x_\ell} f(x+w),
	\end{equation*}
	where $x=(x_1,...,x_n)\in\mathbb{R}^{n}$ and $w\in Im(\mathbb{O})^n$.
	
	Let $\alpha=(\alpha_1,...,\alpha_n)\in\mathbb{N}^n$, we set
	\begin{equation*}
		f^{(\alpha)}:=(\partial_{1})^{\alpha_1}\cdots(\partial_{n})^{\alpha_n} f.
	\end{equation*}
\end{defn}

\begin{prop}
	Let $\Omega$ be a slice-open set in $\mathbb{O}_s^n$ and $f:\Omega\rightarrow\mathbb{O}$ be weak slice regular. Then for each $\alpha\in\mathbb{N}^n$, $f^{(\alpha)}$ is weak slice regular and
	\begin{equation}\label{eq-fa}
		\left(f^{(\alpha)}\right)_I=\left(f_I\right)^{(I,\alpha)},\qquad\forall\ I\in\mathbb{S}.
	\end{equation}	
\end{prop}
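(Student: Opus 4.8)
The plan is to reduce everything to the case of a single slice derivative and then iterate, following verbatim the order in which the operators appear in the definitions $f^{(\alpha)}=(\partial_1)^{\alpha_1}\cdots(\partial_n)^{\alpha_n}f$ and $f^{(I,\alpha)}=(\partial_{I,1})^{\alpha_1}\cdots(\partial_{I,n})^{\alpha_n}f$. The crucial step is the following single-derivative statement: if $f$ is weak slice regular on $\Omega$, then for each $\ell\in\{1,\dots,n\}$ the function $\partial_\ell f$ is again weak slice regular and, for every $I\in\mathbb{S}$,
$$(\partial_\ell f)_I=\partial_{I,\ell}(f_I).$$
Once this is established, the proposition follows by a clean induction, with no commutativity issues to worry about.

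First I would prove the displayed identity. By the definition of the slice derivative, writing a point of $\Omega_I$ as $x+yI$ (so that the imaginary part $w=yI$ is held fixed while $x_\ell$ varies), one has $(\partial_\ell f)_I(x+yI)=\frac{\partial}{\partial x_\ell}f_I(x+yI)$. Since $f$ is weak slice regular, $f_I$ is holomorphic, so equation \eqref{eq-pie} gives $\frac{\partial}{\partial x_\ell}f_I=\partial_{I,\ell}(f_I)$, which is exactly the claimed identity. It remains to check that $(\partial_\ell f)_I=\frac{\partial}{\partial x_\ell}f_I$ is itself holomorphic. This is where I would use that $f_I$ is smooth: applying the Cauchy--Riemann operator $\frac12(\frac{\partial}{\partial x_m}+I\frac{\partial}{\partial y_m})$ and using that it commutes with $\frac{\partial}{\partial x_\ell}$ (constant-coefficient operators, and mixed partials of a smooth function commute), one gets $\frac12(\frac{\partial}{\partial x_m}+I\frac{\partial}{\partial y_m})\frac{\partial}{\partial x_\ell}f_I=\frac{\partial}{\partial x_\ell}[\frac12(\frac{\partial}{\partial x_m}+I\frac{\partial}{\partial y_m})f_I]=0$. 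As $I$ is arbitrary, $\partial_\ell f$ is weak slice regular.

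Then I would obtain the proposition by iterating this single-derivative result in the exact order of the definition. Starting from $f$ (with $(f)_I=f_I$), each application of a slice derivative $\partial_\ell$ preserves weak slice regularity and, on the slice $\mathbb{C}_I^n$, turns into the corresponding $\partial_{I,\ell}$; applying $\partial_n$ a total of $\alpha_n$ times, then $\partial_{n-1}$ a total of $\alpha_{n-1}$ times, and so on, produces successively functions whose $I$-restrictions are $\partial_{I,n}^{\alpha_n}f_I$, then $\partial_{I,n-1}^{\alpha_{n-1}}\partial_{I,n}^{\alpha_n}f_I$, and so forth. After all derivatives are taken one arrives at $(f^{(\alpha)})_I=(\partial_{I,1})^{\alpha_1}\cdots(\partial_{I,n})^{\alpha_n}f_I=(f_I)^{(I,\alpha)}$, while along the way every intermediate function is shown to be weak slice regular, so the slice derivatives and the $I,\ell$-derivatives stay well defined. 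Because the two sides are assembled in matching order, I never need to commute derivatives, which sidesteps any potential subtlety.

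The one point requiring care is the smoothness of $f_I$, used to commute the Cauchy--Riemann operator past $\frac{\partial}{\partial x_\ell}$ and to keep the intermediate $I,\ell$-derivatives well posed. I would settle this via the Splitting Lemma \ref{lm-sp}: writing $f_I=F_1+F_2J+F_3K+F_4(JK)$ with $F_1,\dots,F_4$ complex-valued holomorphic, each $F_i$ is $C^\infty$ by the theory of several complex variables, hence so is $f_I$; this also shows directly that $\frac{\partial}{\partial x_\ell}f_I$ is holomorphic, since each $\frac{\partial F_i}{\partial x_\ell}$ is. The only genuinely octonionic subtlety is that one must avoid using associativity, but here it is harmless: the single fixed unit $I$ generates the associative and commutative subalgebra $\mathbb{R}+\mathbb{R}I\cong\mathbb{C}$ (by alternativity), so left multiplication by $I$ commutes with the real partial derivatives exactly as in the complex case. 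I expect this bookkeeping to be the main, though modest, obstacle.
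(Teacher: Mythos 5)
Your proposal is correct and takes essentially the same route as the paper: an induction that adds one slice derivative at a time, uses the identity $\partial_{I,\ell}f=\frac{\partial}{\partial x_\ell}f$ (equation \eqref{eq-pie}) on the holomorphic restriction $f_I$, and verifies that the resulting restriction is again holomorphic on every slice. If anything you are slightly more careful than the paper, which inserts the new derivative at an arbitrary slot of the multi-index (implicitly commuting slice derivatives), whereas your fixed order of application avoids this; the commutation is harmless anyway, since each $f_I$ is smooth by the Splitting Lemma \ref{lm-sp}.
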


\begin{proof}
	The proof is by induction on the length $|\alpha|=\sum_{i=1}^n \alpha_i$ of $\alpha$. For $|\alpha|=0$, it is clear that $f^{(0)}$ is weak slice regular and \eqref{eq-fa} holds. Suppose that $f^{(\beta)}$ is weak
slice regular, $|\beta|=m$ and \eqref{eq-fa} holds when $\alpha=\beta$. We shall prove the statement for multi-indices of lenght $m+1$, i.e. we shall prove that for all $\ell\in\{1,...,n\}$, $f^{(\beta+\theta_\ell)}$ is weak slice regular and \eqref{eq-fa} holds when $\alpha=\beta+\theta_\ell$, where
	\begin{equation*}
		\theta_\ell=\left(0_{1\times(\ell-1)},1,0_{1\times(n-\ell)}\right).
	\end{equation*}
	
	Let $\ell\in\{1,...,n\}$ and $I\in\mathbb{S}$. Since $f^{(\beta)}$ is weak slice regular, $\left(f^{(\beta)}\right)_I$ is holomorphic. By induction hypothesis and \eqref{eq-pie} we deduce that
	\begin{equation*}
		\left(f^{(\beta+\theta_\ell)}\right)_I
		=\left(\frac{\partial}{\partial x_\ell}f^{(\beta)}\right)
		=\left(\frac{\partial}{\partial x_\ell}(f_I)^{(I,\beta)}\right)
		=(f_I)^{(I,\beta+\theta_\ell)}
	\end{equation*}
	is holomorphic. Since the choice of $I$ is arbitrary, it follows that $f^{(\beta+\theta_\ell)}$ is weak slice regular and \eqref{eq-fa} holds when $\alpha=\beta+\theta_\ell$.
	
	Thus we proved that for each $\alpha\in\mathbb{N}^n$, $f^{(\alpha)}$ is weak slice regular and \eqref{eq-fa} holds.
\end{proof}

\subsection{Further properties of octonions}

Before to state our main results, we prove some useful properties of octonions.

Below, the symbol $\langle\cdot,\cdot\rangle$ denotes the scalar product in $\mathbb O$.
\begin{prop}
	Let $r,s,p\in\mathbb{O}$. Then
	\begin{equation}\label{eq-bsl}
		\langle rp,s\rangle=\langle r,s\overline{p}\rangle.
	\end{equation}
\end{prop}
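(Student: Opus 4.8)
The plan is to pass from the scalar product to octonionic conjugation. Recall that on $\mathbb{O}$ the scalar product is related to conjugation by $\langle a,b\rangle=\mathrm{Re}(a\overline{b})$, where $\mathrm{Re}(a)=\frac{1}{2}(a+\overline{a})$ denotes the real part, and that conjugation is an $\mathbb{R}$-linear anti-involution, i.e. $\overline{\overline{a}}=a$ and $\overline{ab}=\overline{b}\,\overline{a}$ for all $a,b\in\mathbb{O}$. Using these facts I would first rewrite both sides of \eqref{eq-bsl} as real parts of single products: the left-hand side becomes $\langle rp,s\rangle=\mathrm{Re}\big((rp)\overline{s}\big)$, while the computation $\overline{s\overline{p}}=\overline{\overline{p}}\,\overline{s}=p\overline{s}$ turns the right-hand side into $\langle r,s\overline{p}\rangle=\mathrm{Re}\big(r(p\overline{s})\big)$.

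With this reduction the identity \eqref{eq-bsl} is equivalent to $\mathrm{Re}\big((rp)\overline{s}\big)=\mathrm{Re}\big(r(p\overline{s})\big)$. Setting $a=r$, $b=p$, $c=\overline{s}$, it therefore suffices to prove the single statement
\[
\mathrm{Re}\big((ab)c\big)=\mathrm{Re}\big(a(bc)\big),\qquad\forall\,a,b,c\in\mathbb{O},
\]
that is, that the real part of the associator $[a,b,c]:=(ab)c-a(bc)$ vanishes. This is the crux of the argument and the only place where the non-associativity of $\mathbb{O}$ must be handled with care.

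To establish the vanishing of $\mathrm{Re}[a,b,c]$ I would argue by $\mathbb{R}$-trilinearity, reducing to the case where $a,b,c$ are basis vectors $e_i,e_j,e_k$. If any two indices coincide, alternativity of $\mathbb{O}$ forces $[e_i,e_j,e_k]=0$, so only pairwise distinct indices can contribute. For distinct $i,j,k\in\{1,\dots,7\}$ the rule $e_ie_j=-\delta_{ij}+\varepsilon_{ijk}e_k$ shows that $e_ie_j=\pm e_m$ for a single index $m\neq i,j$; hence $(e_ie_j)e_k=\pm e_m e_k$ has nonzero real part precisely when $m=k$, i.e. precisely when $\{i,j,k\}$ is, up to order and sign, one of the multiplication triples in $\Sigma$. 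In that case $e_i,e_j,e_k$ generate an associative (quaternionic) subalgebra, so $(e_ie_j)e_k=e_i(e_je_k)$ and the two real parts coincide; in all remaining cases both $(e_ie_j)e_k$ and $e_i(e_je_k)$ are purely imaginary and the real parts are both zero. Collecting these cases gives $\mathrm{Re}[a,b,c]=0$ for all $a,b,c$, and hence \eqref{eq-bsl}. (Alternatively, one can invoke the standard fact that in an alternative algebra with this involution the associator is alternating and purely imaginary, which yields $\mathrm{Re}[a,b,c]=0$ directly.) The main obstacle is thus entirely concentrated in the real-part associativity $\mathrm{Re}((ab)c)=\mathrm{Re}(a(bc))$; once this is in hand, the remainder is a two-line manipulation of conjugates.
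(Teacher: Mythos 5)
Your proof is correct, but it takes a genuinely different route from the paper's. You convert the scalar product into real parts via $\langle a,b\rangle=\mathrm{Re}(a\overline{b})$ and reduce \eqref{eq-bsl} to the associativity of the real part, $\mathrm{Re}((ab)c)=\mathrm{Re}(a(bc))$, which you verify on basis triples using the structure constants $\varepsilon_{ijk}$ together with Artin's theorem (any two octonions generate an associative subalgebra). The paper instead stays with the scalar product itself: it polarizes, $\langle rp,s\rangle=\frac{1}{4}\left(|rp+s|^2-|rp-s|^2\right)$, factors $rp+s=\left(r+s|p|^{-2}\overline{p}\right)p$ (again by alternativity/Artin), and uses the composition property $|ab|=|a|\,|b|$ of the octonionic norm to extract $|p|^2$, after which polarizing back yields $\langle r,s\overline{p}\rangle$. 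The paper's argument is shorter and needs no multiplication-table analysis, at the price of invoking norm multiplicativity; yours is more elementary in that it relies only on the multiplication rules stated in the preliminaries (or, as you note, on the standard fact that the associator is alternating and purely imaginary). Two small points to tidy up in your case analysis: triples in which one argument is $e_0=1$ fall outside both of your explicit cases (they are trivial, since $1$ associates with everything, but should be mentioned); and the claim that coinciding indices kill the associator uses that the associator is an alternating trilinear map, i.e. the linearization of the alternative laws, which deserves a word of justification.
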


\begin{proof}
	If $p=0$, it is clear that \eqref{eq-bsl} holds. Otherwise,
	\begin{equation*}
		\begin{split}
			\langle rp,s\rangle
			=&\frac{1}{4}\left(|rp+s|^2-|rp-s|^2\right)
			\\=&\frac{1}{4}\left(\left|r+s|p|^{-2}\overline{p}\right|^2|p|^2-\left|r-s|p|^{-2}\overline{p}\right|^2|p|^2\right)
			\\=&\frac{1}{4}\left(\left|r+|p|^{-2}s\overline{p}\right|^2-\left|r-|p|^{-2}s\overline{p}\right|^2\right)|p|^2
			\\=&\langle r,|p|^{-2}s\overline{p}\rangle|p|^2
			=\langle r,s\overline{p}\rangle.
		\end{split}
	\end{equation*}
\end{proof}

Let $q\in\mathbb{O}$. There are unique $r\in\mathbb{R}$ and $s\in Im(\mathbb{O})$ such that
\begin{equation*}
	q=r+s,
\end{equation*}
and this fact justifies the notation
\begin{equation*}
	Re(q):=r\qquad\mbox{and}\qquad Im(q):=s.
\end{equation*}

\begin{prop}\label{eq-rso}
	Let $r,s\in\mathbb{O}$. Suppose that $r\overline{s}\in\mathbb{C}_I$ for some $I\in\mathbb{C}$. Then for any $J\in\mathbb{S}$,
	\begin{equation}\label{eq-mk}
		\min_{K=\pm I}(|r+Ks|)\le|r+Js|\le\max_{K=\pm I}(|r+Ks|).
	\end{equation}
\end{prop}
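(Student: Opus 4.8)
The plan is to reduce \eqref{eq-mk} to a comparison of squared norms and then to the evaluation of a single inner product. Since all three quantities involved are nonnegative, it suffices to prove the inequality for the squares $|r+Js|^2$, $|r+Is|^2$, $|r-Is|^2$, as taking square roots preserves the ordering. Using the polarization identity $|a+b|^2=|a|^2+|b|^2+2\langle a,b\rangle$ together with the multiplicativity of the octonionic norm, which gives $|Js|=|J|\,|s|=|s|$ for $J\in\mathbb{S}$, I would first write
\[
|r+Js|^2=|r|^2+|s|^2+2\langle r,Js\rangle,
\]
and similarly for $K=\pm I$. In this way the whole problem is reduced to understanding the cross term $\langle r,Js\rangle$ as $J$ ranges over $\mathbb{S}$.

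The key step is to evaluate $\langle r,Js\rangle$. Here I would avoid the non-associativity of $\mathbb{O}$ by invoking \eqref{eq-bsl} instead of expanding triple products by hand. By symmetry of the scalar product and \eqref{eq-bsl}, applied with $(r,p,s)$ replaced by $(J,s,r)$, one gets
\[
\langle r,Js\rangle=\langle Js,r\rangle=\langle J,r\overline{s}\rangle.
\]
Now use the hypothesis $r\overline{s}\in\mathbb{C}_I$ to write $r\overline{s}=a+bI$ with $a,b\in\mathbb{R}$. Since $J\in\mathbb{S}$ is purely imaginary, $\langle J,1\rangle=\operatorname{Re}(J)=0$, and hence
\[
\langle r,Js\rangle=a\langle J,1\rangle+b\langle J,I\rangle=b\langle I,J\rangle.
\]
In particular, for $K=\pm I$ one has $\langle I,K\rangle=\pm 1$, so $|r\pm Is|^2=|r|^2+|s|^2\pm 2b$.

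It then remains to compare $|r+Js|^2=|r|^2+|s|^2+2b\langle I,J\rangle$ with these two endpoints. Since $I$ and $J$ are unit octonions, the Cauchy--Schwarz inequality yields $\langle I,J\rangle\in[-1,1]$, so $2b\langle I,J\rangle$ lies between $-2|b|$ and $2|b|$, which are exactly the minimum and maximum of $\{+2b,-2b\}$. Consequently $|r+Js|^2$ lies between $\min_{K=\pm I}|r+Ks|^2$ and $\max_{K=\pm I}|r+Ks|^2$, and taking nonnegative square roots gives \eqref{eq-mk}.

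The only genuine obstacle is the non-associativity of $\mathbb{O}$: a direct computation of $\langle r,Js\rangle=\operatorname{Re}\!\big(r\,\overline{Js}\big)=\operatorname{Re}\!\big(r(\overline{s}\,\overline{J})\big)$ would require reassociating the product, and $r(\overline{s}\,\overline{J})\neq(r\overline{s})\overline{J}$ in general. I sidestep this entirely by routing the computation through the already established identity \eqref{eq-bsl}, whose proof has absorbed that difficulty; after that the argument is purely polarization plus Cauchy--Schwarz.
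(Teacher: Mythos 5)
Your proof is correct. It rests on the same two pillars as the paper's own argument: the adjoint identity \eqref{eq-bsl}, used to identify the $J$-dependent cross term with $\langle J,r\overline{s}\rangle$, and the observation that this inner product is extremized at $J=\pm I$ when $r\overline{s}\in\mathbb{C}_I$ (Cauchy--Schwarz). Where you differ is in how the cross term is isolated. The paper factors $r+Js=\bigl(1+(Js)r^{-1}\bigr)r$ and studies $t_J=\mathrm{Re}\bigl((Js)r^{-1}\bigr)=\langle J,|r|^{-2}r\overline{s}\rangle$, which forces it to (a) treat the degenerate case $r=0$ or $s=0$ separately, (b) implicitly invoke the inverse property of the alternative algebra $\mathbb{O}$ to justify $\bigl((Js)r^{-1}\bigr)r=Js$, and (c) track the identity $|r+Js|^2=\bigl(1+C^2+2t_J\bigr)|r|^2$ and its monotonicity in $t_J$. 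Your polarization decomposition $|r+Js|^2=|r|^2+|s|^2+2\langle r,Js\rangle$ removes all three complications: no inverses appear, so $r=0$ and $s=0$ need no special handling; the only algebraic input beyond \eqref{eq-bsl} is the composition property $|Js|=|s|$, which the paper needs anyway; and the affine dependence on $\langle I,J\rangle$ is explicit, giving $|r\pm Is|^2=|r|^2+|s|^2\pm 2b$ at once. So yours is a mild but genuine streamlining of the same idea; what the paper's normalization buys in return is only the scale-invariant formula for $t_J$, on which nothing else in the paper depends.
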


\begin{proof}
	If $r=0$ or $s=0$, then
	\begin{equation*}
		|r+J s|=|r+K s|
	\end{equation*}
	for each $J,K\in\mathbb{S}$. Then \eqref{eq-mk} holds in this case.
	
	Otherwise, let $J\in\mathbb{S}$. It is clear by \eqref{eq-bsl} that
	\begin{equation}\label{eq-tj}
		t_J:=Re\left((Js)r^{-1}\right)=\left\langle Js,\overline{r^{-1}}\right\rangle=\left\langle J,|r|^{-2}r\overline{s}\right\rangle.
	\end{equation}
	Then
	\begin{equation}\label{eq-rjs}
		\begin{split}
			|r+Js|=&\left|1+(Js)r^{-1}\right||r|=\left[(1+t_J)^2+(C^2-t_J^2)\right]|r|
			\\=&[1+C^2+2t_J]|r|,
		\end{split}
	\end{equation}
	where $C:=|(Js)r^{-1}|=|s|/|r|$ is a constant and $C^2-t_J^2=\left|Im\left((Js)r^{-1}\right)\right|^2$.
	
	Suppose that $r\overline{s}\in\mathbb{C}_I$ for some $I\in\mathbb{C}$. It is easy to check by \eqref{eq-tj} that
	\begin{equation*}
		\min_{K=\pm I}(t_K)\le t_J\le\max_{K=\pm I}t_K,
	\end{equation*}
	and this implies, from \eqref{eq-rjs}, that \eqref{eq-mk} holds.
\end{proof}

\subsection{Taylor series}

In this subsection, we shall prove that  weak slice regular functions can
be expanded in Taylor series on slice-polydiscs.

Let $I\in\mathbb{S}$ and $r=(r_1,...,r_n)\in\mathbb{R}_+^n=(0,+\infty]^n$. For each $z=(z_1,...,z_n)\in\mathbb{C}_I^n$, denote the polydisc with center $z$ and radius $r$ by
\begin{equation*}
	\mathbb{P}_I(z,r):=\left\{w=(w_1,...,w_n)\in\mathbb{C}_I^n:|z_\ell-w_\ell|\le r_\ell,\ \ell=1,...,n\right\},
\end{equation*}
and we set
\begin{equation*}
	\widetilde{\mathbb{P}}_I(z,r):=\{x+yJ\in\mathbb{O}_s^n:J\in\mathbb{S},\ x\pm yI\in\mathbb{P}_I(z,r)\}\bigcup\mathbb{P}_I(z,r).
\end{equation*}
It is easy to check that if $z\in\mathbb{C}_I\cap\mathbb{C}_J$, for some $I,J\in\mathbb{S}$, then
\begin{equation*}
	\widetilde{\mathbb{P}}_I(z,r)=\widetilde{\mathbb{P}}_J(z,r).
\end{equation*}
Hence, we can write $\widetilde{\mathbb{P}}(z,r)$ short for $\widetilde{\mathbb{P}}_I(z,r)$, without ambiguity. We call $\widetilde{\mathbb{P}}(z,r)$ the slice-polydisc with center $z$ and radius $r$.

\begin{prop}\label{pr-ts}
	Let $I\in\mathbb{S}$, $\Omega$ be an open set in $\mathbb{C}_I^n$ and $f:\Omega\rightarrow\mathbb{R}^{2n}$ be holomorphic. Then for each $z_0\in\Omega$ and $r\in\mathbb{R}_+^n$ with $\mathbb{P}_I(z_0,r)\subset\Omega$, we have
	\begin{equation*}
		f(z)=\sum_{\alpha\in\mathbb{N}^d}\frac{1}{\alpha!}(z-z_0)^\alpha f^{(I,\alpha)}(z_0),\qquad\forall\ z\in\mathbb{P}_I(z_0,r).
	\end{equation*}
\end{prop}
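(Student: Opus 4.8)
The plan is to reduce the statement to the classical Taylor expansion theorem for holomorphic functions of several complex variables. First I would use the $\mathbb{R}$-linear identification $\mathbb{C}_I\cong\mathbb{C}$ sending $I$ to the standard unit $i$, under which $\mathbb{C}_I^n\cong\mathbb{C}^n$, the polydisc $\mathbb{P}_I(z_0,r)$ becomes an ordinary closed polydisc, and the holomorphy condition $\frac{1}{2}(\frac{\partial}{\partial x_m}+I\frac{\partial}{\partial y_m})f=0$ becomes the usual Cauchy--Riemann system in each variable. The codomain is a finite-dimensional real vector space, so the argument is componentwise: fixing an s-basis $\{I,J,K\}$ and decomposing with respect to $\mathbb{O}=\mathbb{C}_{I}\oplus\mathbb{C}_{I}J\oplus\mathbb{C}_{I}K\oplus\mathbb{C}_{I}(JK)$ exactly as in the proof of the Splitting Lemma \ref{lm-sp}, I would write $f=F_1+F_2J+F_3K+F_4(JK)$ with $F_1,F_2,F_3,F_4:\Omega\to\mathbb{C}_I$ scalar holomorphic functions on an open subset of $\mathbb{C}^n$. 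Since the real derivatives $\partial/\partial x_\ell$ and the slice derivatives $\partial_{I,\ell}$ act $\mathbb{R}$-linearly and componentwise, and since $(z-z_0)^\alpha\in\mathbb{C}_I$, it suffices to establish the claimed expansion for each scalar component $F_i$.

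Next I would invoke the standard several complex variables result for each $F_i$: iterating the one-variable Cauchy integral formula over the distinguished boundary of a polydisc $\mathbb{P}_I(z_0,r)\subset\Omega$ yields the power series
\begin{equation*}
F_i(z)=\sum_{\alpha\in\mathbb{N}^n}\frac{1}{\alpha!}(z-z_0)^\alpha\,\left(\frac{\partial}{\partial x_1}\right)^{\alpha_1}\cdots\left(\frac{\partial}{\partial x_n}\right)^{\alpha_n}F_i(z_0),
\end{equation*}
converging absolutely and uniformly on compact subsets, with the case $r_\ell=+\infty$ handled by the same estimate in the corresponding entire variable. Recombining the four components, and using that $(z-z_0)^\alpha$ lies in the commutative, associative subfield $\mathbb{C}_I$ (so that by alternativity the products $(z-z_0)^\alpha\, o$ with $o\in\mathbb{O}$ are unambiguous), gives
\begin{equation*}
f(z)=\sum_{\alpha\in\mathbb{N}^n}\frac{1}{\alpha!}(z-z_0)^\alpha\,\left(\frac{\partial}{\partial x_1}\right)^{\alpha_1}\cdots\left(\frac{\partial}{\partial x_n}\right)^{\alpha_n}f(z_0).
\end{equation*}

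Finally, I would identify the coefficients with the slice derivatives. Because $f$ is holomorphic, the relation \eqref{eq-pie} gives $\partial_{I,\ell}f=\frac{\partial}{\partial x_\ell}f$ for every $\ell$; iterating this identity over the multi-index $\alpha$ shows that $(\partial/\partial x_1)^{\alpha_1}\cdots(\partial/\partial x_n)^{\alpha_n}f=f^{(I,\alpha)}$, which turns the previous display into the asserted formula (with the normalization factor $1/\alpha!$, consistent with the Taylor expansion used in the proof of Lemma \ref{th-dps}). The only genuinely analytic ingredient is the classical multivariable Cauchy/Taylor theorem invoked in the second paragraph, and the main point requiring care is the convergence on the \emph{closed} polydisc $\mathbb{P}_I(z_0,r)$ together with the treatment of infinite radii; both are handled by first passing to a slightly larger open polydisc still contained in the open set $\Omega$ and then restricting. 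The non-commutativity and non-associativity of $\mathbb{O}$ cause no difficulty here, precisely because the expansion variables $(z-z_0)^\alpha$ are complex scalars in $\mathbb{C}_I$.
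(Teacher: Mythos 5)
Your proposal is correct and follows essentially the same route as the paper, whose entire proof is the one-line remark that the result follows from the Splitting Lemma \ref{lm-sp} together with the classical Taylor expansion for holomorphic functions of several complex variables; you have simply written out the details (componentwise splitting over $\mathbb{C}_I$, the iterated Cauchy-integral expansion, and the identification $f^{(I,\alpha)}=\left(\partial/\partial x_1\right)^{\alpha_1}\cdots\left(\partial/\partial x_n\right)^{\alpha_n}f$ via \eqref{eq-pie}). One remark: your expansion carries the factor $1/\alpha!$, which is absent from the statement as printed but is indeed the correct normalization, consistent with the expansion used in the proof of Lemma \ref{th-dps}, so in effect you have also flagged a typo in the statement.
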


\begin{proof}
	This proposition follows directly from the Splitting Lemma \ref{lm-sp} and the Taylor expansion for holomorphic functions in several complex analysis.
\end{proof}

Let $\alpha=(\alpha_1,...,\alpha_n)\in\mathbb{N}^n$ and $\beta=(\beta_1,...,\beta_n)\in\mathbb{N}^n$. We say that $\alpha<\beta$ if
\begin{equation*}
	\alpha_\ell<\beta_\ell,\qquad\forall\ \ell=1,...,n.
\end{equation*}

Let $p\in\mathbb{O}_s^n$. We define the map
	\begin{equation*}
	\begin{split}
		(id-p)^{*\alpha}:\quad\mathbb{O}_s^n\ &\xlongrightarrow[\hskip1cm]{}\ End(\mathbb{O}),
		\\ q\ &\shortmid\!\xlongrightarrow[\hskip1cm]{}\ \sum_{0\le\beta\le\alpha}\left[
		\begin{pmatrix}
			\alpha\\\beta
		\end{pmatrix}L_q^\beta L_p^{\alpha-\beta}\right],
	\end{split}
\end{equation*}
where $id(q)=q$,
\begin{equation*}
	\begin{pmatrix}
		\alpha\\\beta
	\end{pmatrix}:=\begin{pmatrix}
		\alpha_1\\\beta_1
	\end{pmatrix}\cdots\begin{pmatrix}
		\alpha_n\\\beta_n
	\end{pmatrix}
\end{equation*}
is the binomial coefficient and
\begin{equation*}
	L_q^\beta:=\prod_{\ell=1}^n\left(L_{q_\ell}\right)^{\beta_\ell}=\left(L_{q_1}\right)^{\beta_1}\cdots \left(L_{q_n}\right)^{\beta_n}.
\end{equation*}
Below,  we write $(q-q_0)^{*\alpha}$ instead of $(id-p)^{*\alpha}(q)$ and
$(q-p)^{*\alpha}a$ instead of $[(q-p)^{*\alpha}](a)$.
\begin{prop}\label{pr-lpn}
	Let $p\in\mathbb{O}_s^n$ and $a\in\mathbb{O}$. The function defined by
	\begin{equation*}
		\begin{split}
			f:\quad\mathbb{O}_s^n\ &\xlongrightarrow[\hskip1cm]{}\ \mathbb{O},
			\\ q\ &\shortmid\!\xlongrightarrow[\hskip1cm]{}\ (q-p)^{*\alpha}a,
		\end{split}
	\end{equation*}
	is a weak slice regular function.
	
	Moreover, if $p\in\mathbb{C}_I^n$ and $q=x+yJ$ for some $I,J\in\mathbb{S}$ and $x,y\in\mathbb{R}^n$, then
	\begin{equation}\label{eq-mlq}
		\min_{r=x\pm yI} \left|(r-p)^{*\alpha}a\right|
		\le \left|(q-p)^{*\alpha}a\right|\le\max_{r=x\pm yI}\left|(r-p)^{*\alpha}a\right|.
	\end{equation}
\end{prop}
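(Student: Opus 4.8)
The plan is to treat the two assertions separately. For weak slice regularity I would fix $I\in\mathbb{S}$ and restrict to the slice $\mathbb{C}_I^n$, writing $q=x+yI$ so that $L_{q_\ell}=x_\ell+y_\ell L_I$. These operators are real polynomials in the single operator $L_I$, hence they commute with one another and with $L_I$, and $L_I^2=-1$. In $L_q^\beta=\prod_\ell(x_\ell+y_\ell L_I)^{\beta_\ell}$ only the $m$-th factor depends on $(x_m,y_m)$; writing $M:=\partial_{x_m}L_q^\beta$ one gets $\partial_{y_m}L_q^\beta=L_I M$ (because $L_I$ commutes with every $L_{q_\ell}$), so that $(\partial_{x_m}+L_I\partial_{y_m})L_q^\beta=M+L_I^{2}M=(1+L_I^{2})M=0$. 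Since each $L_p^{\alpha-\beta}a$ is a constant octonion on the slice and $I\partial_{y_m}=L_I\partial_{y_m}$, applying $\tfrac12(\partial_{x_m}+I\partial_{y_m})$ term by term to $(q-p)^{*\alpha}a=\sum_\beta\binom{\alpha}{\beta}L_q^\beta L_p^{\alpha-\beta}a$ gives zero for every $m$; together with the evident smoothness of $f_I$ this shows $f_I$ is holomorphic, and as $I$ is arbitrary, $f$ is weak slice regular.

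For the estimate \eqref{eq-mlq} I would first put the value into the shape required by Proposition \ref{eq-rso}. With $q=x+yJ$, the isomorphism $\mathbb{R}[L_J]\cong\mathbb{C}$ sending $L_J\mapsto i$ turns $L_q^\beta=\prod_\ell(x_\ell+y_\ell L_J)^{\beta_\ell}$ into $P_\beta+Q_\beta L_J$, where $P_\beta+iQ_\beta=\prod_\ell(x_\ell+y_\ell i)^{\beta_\ell}$; crucially $P_\beta,Q_\beta\in\mathbb{R}$ are \emph{independent of} $J$. Moreover, since $p\in\mathbb{C}_I^n$ and the associator $[c,c',o]$ is alternating and vanishes whenever two of its entries lie in $\mathbb{C}_I$, left multiplications by elements of $\mathbb{C}_I$ compose associatively, so $L_p^{\alpha-\beta}a=\pi_\beta a$ with $\pi_\beta:=\prod_\ell p_\ell^{\alpha_\ell-\beta_\ell}\in\mathbb{C}_I$. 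Collecting terms and using the $\mathbb{R}$-linearity of left multiplication, I obtain
\[
(q-p)^{*\alpha}a=R+JS,\qquad R=\mu a,\quad S=\nu a,
\]
where $\mu:=\sum_\beta\binom{\alpha}{\beta}P_\beta\pi_\beta\in\mathbb{C}_I$ and $\nu:=\sum_\beta\binom{\alpha}{\beta}Q_\beta\pi_\beta\in\mathbb{C}_I$; here $R,S$ do not depend on $J$, and the companion points $x\pm yI$ (i.e.\ $J=\pm I$) produce exactly the values $R\pm IS$.

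It then remains to verify the hypothesis $R\overline{S}\in\mathbb{C}_I$ of Proposition \ref{eq-rso}. I would note that $B(\mu,\nu):=(\mu a)\overline{(\nu a)}=(\mu a)(\bar a\,\bar\nu)$ is $\mathbb{R}$-bilinear in $(\mu,\nu)\in\mathbb{C}_I\times\mathbb{C}_I$, so it suffices to evaluate it on the basis $\{1,I\}$: using that $\bar a=2\operatorname{Re}(a)-a$ lies in the associative subalgebra generated by $a$, whence the associators involving $a$ and $\bar a$ vanish, one computes $B(1,1)=B(I,I)=|a|^{2}$ and $B(1,I)=-|a|^{2}I$, $B(I,1)=|a|^{2}I$, all in $\mathbb{C}_I$. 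Hence $R\overline{S}=B(\mu,\nu)\in\mathbb{C}_I$, and Proposition \ref{eq-rso} applied with $r=R$, $s=S$ gives $\min_{K=\pm I}|R+KS|\le|R+JS|\le\max_{K=\pm I}|R+KS|$, which is precisely \eqref{eq-mlq}. The \textbf{main obstacle} is exactly this last stage: because $\mathbb{O}$ is non-associative, neither the reduction to $R+JS$ with $J$-independent $R,S$ nor the membership $R\overline{S}\in\mathbb{C}_I$ is automatic, and both depend on careful tracking of associators and on the fact that any two octonions generate an associative subalgebra.
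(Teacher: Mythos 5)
Your proof is correct; the first half matches the paper, while the second half takes a genuinely different route. For weak slice regularity both you and the paper apply the Cauchy--Riemann operator termwise to $\sum_{\beta}\binom{\alpha}{\beta}L_q^\beta L_p^{\alpha-\beta}a$ and kill each term because the operator-valued map $z\mapsto L_z^\beta$ itself satisfies the CR equation; the paper isolates the $m$-th factor $(x_m+y_mL_K)^{\beta_m}$ where you invoke commutativity of $\mathbb{R}[L_I]$, an immaterial difference. For the estimate \eqref{eq-mlq}, however, the paper does not compute anything explicitly: it notes that $f$, being weak slice regular on the axially symmetric slice-domain $\mathbb{O}_s^n$, is strong slice regular by Lemma \ref{lm-ws} and hence a slice function, then uses the representation formula of Proposition \ref{pr-sf} \ref{it-sf4} to write $f(x+yJ)$ with $J$-independent components, asserting as ``easy to check'' that these have the form $w_1a$, $w_2a$ with $w_1,w_2\in\mathbb{C}_I$, and concludes via Proposition \ref{eq-rso} exactly as you do. You instead produce the decomposition $(q-p)^{*\alpha}a=R+JS$ with $R=\mu a$, $S=\nu a$ by direct computation --- the $J$-independence of $P_\beta,Q_\beta$ in $L_q^\beta=P_\beta+Q_\beta L_J$, plus the associator argument giving $L_p^{\alpha-\beta}a=\pi_\beta a$ --- which amounts to a self-contained proof of sliceness for this particular $f$, bypassing Lemma \ref{lm-ws} (and with it Theorem \ref{tm-ws} and the identity principle on which that lemma rests). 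The paper's route buys brevity by reusing machinery already developed; yours buys explicitness and logical independence from the representation-formula apparatus, and in fact supplies the verification the paper omits. One simplification: your basis computation for $R\overline{S}\in\mathbb{C}_I$ can be shortened to one line, since by Artin's theorem $\mu$, $\nu$, $a$, $\bar a$ all lie in the associative subalgebra generated by $I$ and $a$, whence $(\mu a)(\bar a\,\bar\nu)=|a|^{2}\mu\bar\nu\in\mathbb{C}_I$ --- which is precisely the paper's one-line computation.
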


\begin{proof}
	(i) Let $K\in\mathbb{S}$ and $z=x+yK\in\mathbb{C}_K^n$. For any $\beta\in\mathbb{N}^n$, $b\in\mathbb{O}$ and $\ell=\{1,...,n\}$, we have
	\begin{equation*}
		\begin{split}
			&\frac{1}{2}\left(\frac{\partial}{\partial x_\ell}+K\frac{\partial}{\partial y_\ell}\right)(L_z^\beta b)
			\\=&\left[\frac{1}{2}\left(\frac{\partial}{\partial x_\ell}+L_K\frac{\partial}{\partial y_\ell}\right)(x_\ell+y_\ell L_K)^{\beta_\ell}\right]\left(\prod_{\jmath\neq\ell}(L_{z_\jmath})^{\beta_\jmath}b\right)=0.
		\end{split}
	\end{equation*}
We deduce that for any $\ell\in\{1,...,n\}$,
	\begin{equation*}
		\begin{split}
			&\frac{1}{2}\left(\frac{\partial}{\partial x_\ell}+K\frac{\partial}{\partial y_\ell}\right)f_K(z)
			\\=&\frac{1}{2}\left(\frac{\partial}{\partial x_\ell}+K\frac{\partial}{\partial y_\ell}\right)(z-p)^{*\alpha}a
			\\=&\sum_{0\le\beta\le\alpha}\left[\frac{1}{2}\left(\frac{\partial}{\partial x_\ell}+K\frac{\partial}{\partial y_\ell}\right)
			\begin{pmatrix}
				\alpha\\\beta
			\end{pmatrix}L_z^\beta \left(L_p^{\alpha-\beta}a\right)\right]=0.
		\end{split}
	\end{equation*}
	So $f_K$ is holomorphic  and since the choice of $z$ and $K$ is arbitrary, $f$ is weak slice regular.
	
	(ii) Since $f$ is weak slice regular and $\mathbb{O}_s^n$ is axially symmetric, it follows from Lemma \ref{lm-ws} that $f$ is strong slice regular and so $f$ is also slice. By Proposition \ref{pr-sf} \ref{it-sf4},
	\begin{equation*}
		(q-p)^{*\alpha}a=f(q)=f(x+yJ)=s+Jr,
	\end{equation*}
	where
	\begin{equation*}
		\begin{cases}
			r=\frac{1}{2}\left[f(x+yI)+f(x-yI)\right],
			\\s=\frac{1}{2}[If(x+yI)-If(x-yI)].
		\end{cases}
	\end{equation*}
	It is easy to check that
	\begin{equation*}
		r=w_1 a\qquad\mbox{and}\qquad s=w_2 a,
	\end{equation*}
	for some $w_1,w_2\in\mathbb{C}_I$. It is clear that
	\begin{equation*}
		r\overline{s}=|a|^2 w_1\overline{w_2}\in\mathbb{C}_I.
	\end{equation*}
	Hence \eqref{eq-mlq} holds directly by Proposition \ref{eq-rso}.
\end{proof}

\begin{thm} (Taylor series)
	Let $\Omega$ be a slice-open set in $\mathbb{O}_s^n$ and $f:\Omega\rightarrow\mathbb{O}$ be weak slice regular. Then for any $q_0\in\Omega$ and $r\in\mathbb{R}_+^n$ with $\widetilde{\mathbb{P}}(q_0,r)\subset\Omega$, we
have
	\begin{equation}\label{eq-fqsa}
		f(q)=\sum_{\alpha\in\mathbb{N}^d}\frac{1}{\alpha!}(q-q_0)^{*\alpha} f^{(\alpha)}(q_0),\qquad q\in\widetilde{\mathbb{P}}(q_0,r).
	\end{equation}
\end{thm}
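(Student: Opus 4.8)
The plan is to realize the right-hand side of \eqref{eq-fqsa} as a convergent, weak slice regular series, to verify that it reproduces $f$ on a single slice, and then to use the representation formula to spread the equality over the whole slice-polydisc. Write $q_0=x_0+y_0I_0\in\mathbb{C}_{I_0}^n$ and set
\[
g(q):=\sum_{\alpha\in\mathbb{N}^n}(q-q_0)^{*\alpha}f^{(\alpha)}(q_0),\qquad q\in\widetilde{\mathbb{P}}(q_0,r),
\]
so that the assertion becomes $f=g$ on $\widetilde{\mathbb{P}}(q_0,r)$. The three ingredients I would invoke are the estimate \eqref{eq-mlq}, the coefficient identity \eqref{eq-fa}, and the upgrade ``weak $\Rightarrow$ slice'' of Theorem~\ref{tm-ws}.

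First I would settle convergence. Fix $q=x+yJ\in\widetilde{\mathbb{P}}(q_0,r)$, so that $x\pm yI_0\in\mathbb{P}_{I_0}(q_0,r)$ by definition. Applying \eqref{eq-mlq} of Proposition~\ref{pr-lpn} with $p=q_0$ and $a=f^{(\alpha)}(q_0)$ bounds the norm of each term by $\max_{s=x\pm yI_0}|(s-q_0)^{*\alpha}f^{(\alpha)}(q_0)|$. Since $s,q_0\in\mathbb{C}_{I_0}^n$, Artin's theorem makes the $*$-power collapse to the ordinary slicewise monomial $(s-q_0)^\alpha f^{(\alpha)}(q_0)$, whose norm equals $\prod_\ell|s_\ell-q_{0\ell}|^{\alpha_\ell}\,|f^{(\alpha)}(q_0)|\le r^\alpha|f^{(\alpha)}(q_0)|$. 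Thus each term of $g$ is dominated, uniformly in $q$, by the corresponding term of the holomorphic Taylor series of $f_{I_0}$ on $\mathbb{P}_{I_0}(q_0,r)$; as that closed polydisc is a compact subset of the open set $\Omega_{I_0}$, Cauchy estimates on a slightly larger polydisc make this majorant summable. Hence $g$ converges absolutely and uniformly on $\widetilde{\mathbb{P}}(q_0,r)$, is continuous there, and, being a uniform limit on each slice of the weak slice regular partial sums of Proposition~\ref{pr-lpn}, is itself weak slice regular on the slice-interior of $\widetilde{\mathbb{P}}(q_0,r)$.

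Next I would identify $g$ with $f$ on the slice $\mathbb{C}_{I_0}^n$: restricting to $z\in\mathbb{P}_{I_0}(q_0,r)$ and reducing each $*$-power as above, $g_{I_0}(z)=\sum_\alpha(z-q_0)^\alpha f^{(\alpha)}(q_0)$, which by \eqref{eq-fa} and the slicewise Taylor expansion for holomorphic functions established above is exactly $f_{I_0}(z)$; so $f=g$ on $\mathbb{P}_{I_0}(q_0,r)$. To transport this, let $U:=\{x+yK:K\in\mathbb{S},\ x\pm yI_0\in\mathbb{P}_{I_0}^\circ(q_0,r)\}$ be the open slice-polydisc. It is slice-open, axially symmetric and contained in $\Omega$, and whenever it is nonempty each of its slices is convex and meets $\mathbb{R}^n$, so $U$ is a real-connected axially symmetric slice-domain. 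Both $f$ and $g$ are weak slice regular on $U$, hence slice functions there by Theorem~\ref{tm-ws}. Since they already agree at the points $x\pm yI_0$, the representation formula of Corollary~\ref{co-rf} applied with $J=I_0$ forces $f(x+yK)=g(x+yK)$ for every $x+yK\in U$. Finally each slice of $\widetilde{\mathbb{P}}(q_0,r)$ is the closure of the corresponding slice of $U$, and both $f$ (slicewise holomorphic) and $g$ (uniform limit) are continuous, so the equality extends to the closed slice-polydisc; the lower-dimensional degenerate configurations where $U$ collapses ($|y_{0\ell}|=r_\ell$ for some $\ell$) freeze the corresponding variables and are dispatched by the same argument in fewer coordinates.

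The hard part will be precisely this last transport step, and it is worth isolating why it is delicate. Weak slice regularity is an entirely independent holomorphy condition on each slice, so the coefficients $f^{(\alpha)}(q_0)$, which only record the behaviour of $f$ along $\mathbb{C}_{I_0}^n$, carry \emph{no} information about $f$ on the other slices in general. What rescues the argument is the hypothesis $\widetilde{\mathbb{P}}(q_0,r)\subset\Omega$: it forces $\Omega$ to contain an axially symmetric slice-neighbourhood of the spheres $x+y\mathbb{S}$, and on the slice-domain $U$ Theorem~\ref{tm-ws} genuinely upgrades $f$ from a slicewise-holomorphic function to a bona fide slice function, at which point the purely algebraic representation formula carries the equality off the slice $\mathbb{C}_{I_0}^n$. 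Without the axial symmetry supplied by $\widetilde{\mathbb{P}}$ the statement would simply be false, which is exactly the phenomenon the weak/strong distinction is designed to capture.
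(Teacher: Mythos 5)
Your proposal reproduces the paper's proof in its first three steps: the termwise bound via \eqref{eq-mlq} and Proposition~\ref{pr-lpn}, the weak slice regularity of the sum $g$, and the identification $f=g$ on the $I_0$-slice via \eqref{eq-fa} and the slicewise Taylor expansion are exactly the paper's ingredients (the paper phrases convergence as pointwise convergence of the reflected slicewise series at $x\pm yI_0$, where you build a uniform Cauchy-estimate majorant; same substance, and your uniform version is what later feeds your continuity argument). Where you genuinely differ is the transport step. The paper concludes in one stroke from the Identity Principle (Theorem~\ref{th-idh}) applied to $f$ and $g$; you instead construct the open slice-polydisc $U$, verify via convexity that it is a real-connected axially symmetric slice-domain, upgrade $f$ and $g$ to slice functions on $U$ by Theorem~\ref{tm-ws}, and propagate the equality off $\mathbb{C}_{I_0}^n$ by the representation formula of Corollary~\ref{co-rf}, finishing by continuity up to the closed polydisc. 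Your route is longer but supplies two things the paper leaves implicit: the Identity Principle requires its underlying set to be a slice-\emph{domain}, and the paper never checks that the slice-interior of $\widetilde{\mathbb{P}}(q_0,r)$ is slice-connected (your convexity argument is precisely that verification); likewise the paper silently passes from the open to the closed slice-polydisc.

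Two gaps remain. The degenerate case $|y_{0\ell}|=r_\ell$ cannot be ``dispatched in fewer coordinates'' as you assert: the center $q_0$ does not lie in the frozen hyperplane (since $q_{0\ell}=x_{0\ell}+y_{0\ell}I_0\neq x_{0\ell}$), so the series in \eqref{eq-fqsa} still involves the $\ell$-th variable and is not an instance of the theorem in $n-1$ variables; a genuine resummation argument would be needed (the paper is equally silent here, as its identity-principle step is vacuous when the slice-interior is empty). More seriously, you share the paper's unjustified assumption that $\mathbb{P}_{I_0}(q_0,r)$ lies in $\Omega_{I_0}$ (your words: ``that closed polydisc is a compact subset of the open set $\Omega_{I_0}$''). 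The hypothesis $\widetilde{\mathbb{P}}(q_0,r)\subset\Omega$ only gives $\widetilde{\mathbb{P}}(q_0,r)\cap\mathbb{C}_{I_0}^n$, which is the intersection of $\mathbb{P}_{I_0}(q_0,r)$ with its slicewise conjugate, and this is strictly smaller whenever $q_0\notin\mathbb{R}^n$. The step is not reparable: for $n=1$, $q_0=\epsilon I_0$, $r=1$, the set $\widetilde{\mathbb{P}}(q_0,1)$ is the lens $\{x+yJ:x^2+(|y|+\epsilon)^2\le 1\}$; taking $\Omega$ a slightly enlarged open lens and $f$ the weak slice regular extension (via the construction in Theorem~\ref{tm-ws}) of $z\mapsto(z-w_*)^{-1}$ with $w_*\in\mathbb{C}_{I_0}\setminus\Omega_{I_0}$ and $|w_*-q_0|<1$, the series \eqref{eq-fqsa} diverges at $-(1-\epsilon)I_0\in\widetilde{\mathbb{P}}(q_0,1)$. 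So both your proof and the paper's are valid only when $q_0\in\mathbb{R}^n$, or under the added hypothesis $\mathbb{P}_{I_0}(q_0,r)\subset\Omega$; under either, your argument goes through.
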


\begin{proof}
	Let $q=x+yJ\in\widetilde{\mathbb{P}}(q_0,r)$. Suppose that $q_0\in\mathbb{C}_I^n$ for some $I\in\mathbb{S}$. If $J=\pm I$, then \eqref{eq-fqsa} holds by Proposition \ref{pr-ts}. Otherwise, by definition, $x\pm yI\in\widetilde{\mathbb{P}}(q_0,r)$. According to \eqref{eq-mlq}, for each $\alpha\in\mathbb{N}^n$,
	\begin{equation}\label{eq-faq0}
		\begin{split}
			\left|(q-q_0)^{*\alpha} f^{(\alpha)}(q_0)\right|\le\left|(q_I-q_0)^{*\alpha} f^{(\alpha)}(q_0)\right|+\left|(q_{-I}-q_0)^{*\alpha} f^{(\alpha)}(q_0)\right|
			\\=\left|(q_I-q_0)^{\alpha} f^{(I,\alpha)}(q_0)\right|+\left|(q_{-I}-q_0)^{\alpha} f^{(I,\alpha)}(q_0)\right|,
		\end{split}
	\end{equation}
	where $q_I=x+yI$ and $q_{-I}=x-yI$. Since
	\begin{equation*}
		\sum_{\alpha\in\mathbb{N}^n}\left|\frac{1}{\alpha!}(q_I-q_0)^{\alpha} f^{(I,\alpha)}(q_0)\right|\qquad\mbox{and}\qquad\sum_{\alpha\in\mathbb{N}^n}\left|\frac{1}{\alpha!}(q_{-I}-q_0)^{\alpha} f^{(I,\alpha)}(q_0)\right|
	\end{equation*}
	are convergent at $q_I$ and $q_{-I}$, respectively, it follows from \eqref{eq-faq0} that
	\begin{equation*}
		\sum_{\alpha\in\mathbb{N}^n}\left|\frac{1}{\alpha!}(q-q_0)^{*\alpha} f^{(\alpha)}(q_0)\right|\qquad\mbox{and}\qquad \sum_{\alpha\in\mathbb{N}^n}\frac{1}{\alpha!}(q-q_0)^{*\alpha}
f^{(\alpha)}(q_0)
	\end{equation*}
	are also convergent at $q$.
	
	By Proposition \ref{pr-lpn}, the function $g:\widetilde{\mathbb{P}}(q_0,r)\rightarrow\mathbb{O}$ defined by
	\begin{equation*}
		g(q):=\sum_{\alpha\in\mathbb{N}^n}\frac{1}{\alpha!}(q-q_0)^{*\alpha} f^{(\alpha)}(q_0)
	\end{equation*}
	is a weak slice regular function. Note that
	\begin{equation*}
		f=g,\qquad\mbox{on}\qquad\mathbb{P}_I(q_0,r),
	\end{equation*}
	and $f(q)$ and $g(q)$ are weak slice regular. It follows from the Identity Principle \ref{th-idh} that
	$f=g$,
	i.e. \eqref{eq-fqsa} holds.
\end{proof}

\section{Example}\label{sc-ex}

In this section we give an example of a weak slice regular function which
is not strong slice regular. In fact that there are weak slice regular functions which are not strong slice regular, not even on non-axially symmetric slice-domains, see \cite[Section 8]{Dou2020001}. To construct examples is more complicated, but basically they can be obtained by moving the singularities of the function $\widetilde{f}$ constructed below away from
$\mathbb{R}^n$.

Define two paths $\alpha,\beta:[0,1]\rightarrow\mathbb{C}^n$ by
\begin{equation*}
\alpha(t)=(e^{\frac{i\pi t}{2}},0,...,0)\qquad\mbox{and}\qquad\beta(t)=(e^{\frac{i\pi (t+1)}{2}},0,...,0),\qquad\forall\ t\in[0,1].
\end{equation*}
Let $\theta_1:=(1,0,...,0)$. It is easy to check that $\alpha$ is a path from form $\theta_1$ to $i\theta_1$ and $\beta$ is a path form $-\theta_1$ to $-i\theta_1$. Set
	\begin{equation*}
		U:=\{z\in\mathbb{C}^n:\mbox{dist}(z,\alpha)<\frac{1}{2}\},\qquad V:=\{z\in\mathbb{C}^n:\mbox{dist}(z,\beta)<\frac{1}{2}\},
	\end{equation*}
	\begin{equation*}
		U':=\{x+yi\in\mathbb{C}^n:x-yi\in U\}\quad\mbox{and}\quad V':=\{x+yi\in\mathbb{C}^n:x-yi\in V\}.
	\end{equation*}
Choose $I,J\in\mathbb{S}$ with $J\neq\pm I$. Then
\begin{equation}\label{eq-ob}
\Omega:=B(\theta_1,\frac{1}{2})\cup B(-\theta_1,\frac{1}{2})\cup\mathcal{P}_J( U'\cup V'\cup V)\cup\mathcal{P}_I(U),
\end{equation}
is a slice-domain, where $B(\theta_1,\frac{1}{2}):=\{o\in\mathbb{O}_s^n:|o-\theta_1|\le 1/2\}$. One may check that the function
\begin{equation}\label{eq-fbl}
	\begin{split}
		f:B\left(\theta_1,\frac{1}{2}\right)\bigcap\mathbb{R}^n &\xlongrightarrow[\hskip1cm]{}\quad \mathbb{R},
		\\ x=(x_1,...,x_n)\ &\shortmid\!\xlongrightarrow[\hskip1cm]{}\ \sqrt{x_1}
	\end{split}
\end{equation}
has a weak slice regular extension $\widetilde{f}$ on $\Omega$ with
\begin{equation}\label{eq-slf}
\widetilde{f}(\theta_1 J)=\frac{-1-J}{\sqrt{2}},\qquad\widetilde{f}(-\theta_1J)=\frac{1-J}{\sqrt{2}}\qquad
\mbox{and}\qquad\widetilde{f}(-\theta_1I)=\frac{1+I}{\sqrt{2}}.
\end{equation}
Simple computations using \eqref{eq-slf} show that $\widetilde{f}$ does not satisfy \eqref{eq-rf}. Hence $\widetilde{f}$ is not a slice function and it is not strong slice regular, although $\widetilde{f}$ is weak slice
regular.

\begin{prop}\label{pr-tin}
	There is no strong slice regular extension of
		$f$, where $f$ is defined in \eqref{eq-fbl}.
\end{prop}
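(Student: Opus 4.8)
The plan is to argue by contradiction: I would use the Identity Principle to show that any hypothetical strong slice regular extension must coincide with the function $\widetilde{f}$ constructed above, and then derive a contradiction from the fact that strong slice regular functions are slice functions while $\widetilde{f}$ is not.

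First I would suppose that a strong slice regular extension $g\colon\Omega\rightarrow\mathbb{O}$ of $f$ exists, that is, a strong slice regular function on $\Omega$ with $g=f$ on $B(\theta_1,\tfrac12)\cap\mathbb{R}^n$. By Remark \ref{rm-sw}, $g$ is then weak slice regular. The function $\widetilde{f}$ constructed above is also weak slice regular on $\Omega$ and restricts to $f$ on $B(\theta_1,\tfrac12)\cap\mathbb{R}^n$; hence $g$ and $\widetilde{f}$ are two weak slice regular functions on $\Omega$ that coincide on the nonempty open subset $\{x\in\mathbb{R}^n:|x-\theta_1|<\tfrac12\}$ of $\Omega_{\mathbb{R}}$. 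Since $\Omega$ is a slice-domain, the Identity Principle (Theorem \ref{th-idh}) yields $g=\widetilde{f}$ on all of $\Omega$; in particular $g$ takes the values recorded in \eqref{eq-slf}.

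Next I would exploit the structural consequence of strong slice regularity. By Definition \ref{df-sr}, $g$ possesses a holomorphic stem function, so $g$ is in particular a slice function, and therefore satisfies the representation formula \eqref{eq-rf} of Corollary \ref{co-rf} for every admissible triple of imaginary units. However, evaluating \eqref{eq-rf} at the point $-\theta_1 I$ with reference unit $J$ and substituting the values \eqref{eq-slf} produces an identity that fails, precisely as noted after \eqref{eq-slf}: the right-hand side lies in the real span of $J$ and $IJ$, whereas $g(-\theta_1 I)=\tfrac{1+I}{\sqrt2}$ has nonzero components along $1$ and $I$ (here $J\neq\pm I$ guarantees that $\{1,I,J,IJ\}$ are linearly independent over $\mathbb{R}$). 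This contradicts the representation formula, so no strong slice regular extension of $f$ can exist.

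The computational step is only the short linear check already carried out for $\widetilde{f}$; the substance of the argument is the reduction. The key — and the only delicate point — is that the Identity Principle is available on the non-axially-symmetric slice-domain $\Omega$, so that the values of any extension on $\Omega_{\mathbb{R}}$ rigidly determine it throughout $\Omega$, pinning every candidate down to $\widetilde{f}$. Once this is in place, the contrast between the two definitions does the rest: strong slice regularity forces the representation formula through the slice-function structure, while the very construction of $\widetilde{f}$ over the asymmetric domain $\Omega$ produces values that break it.
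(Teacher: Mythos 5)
Your proof is correct and takes essentially the same approach as the paper's: argue by contradiction, use Remark \ref{rm-sw} and the Identity Principle (Theorem \ref{th-idh}) to force any strong slice regular extension to coincide with $\widetilde{f}$, and then contradict the representation formula \eqref{eq-rf} using the values \eqref{eq-slf}. The only difference is presentational: you carry out inside the proof the check that $\widetilde{f}$ violates \eqref{eq-rf} (correctly noting the right-hand side lies in $\mathrm{span}_{\mathbb{R}}\{J,IJ\}$ while $\widetilde{f}(-\theta_1 I)$ lies in $\mathrm{span}_{\mathbb{R}}\{1,I\}$), whereas the paper performs this computation in the text preceding the proposition and simply cites it.
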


\begin{proof}
	We prove this proposition by contradiction. Suppose that there is a strong slice regular extension $f':\Omega\rightarrow\mathbb{O}$ of $f$. By Remark \ref{rm-sw}, $f'$ is weak slice regular. According to Theorem \ref{th-idh}, $f'=\widetilde{f}$. However, $f'=\widetilde{f}$ is not strong
slice regular, which is a contradiction.
\end{proof}


\end{document}